\let\@afterindenttrue\@afterindentfalse
\newtheorem{thm}{Theorem}[section]
\newtheorem{prop}[thm]{Proposition}
\newtheorem{que}{Question}
\theoremstyle{definition}
\newtheorem{defn}[thm]{Definition}
\newtheorem{lem}[thm]{Lemma}
\theoremstyle{definition}
\newtheorem{ex}[thm]{Example}
\newcommand{\dual}[1]{#1^{\text{\textexclamdown}}}
\newcommand{\cobar}[1]{\Omega \dual #1}
\newcommand{\C}{\mathbb{C}}
\newcommand{\Spec}{\text{Spec}\,}
\newcommand{\BSpec}{{\textbf{\text{Spec}\,}}}
\newcommand{\A}{\mathcal{A}}
\newcommand{\bC}{\mathbb{C}}
\newcommand{\bL}{\mathbb{L}}
\newcommand{\bT}{\mathbb{T}}
\newcommand{\dd}{d_{dR}}
\newcommand{\Hom}{\text{Hom}}
\newcommand{\RHom}{\text{RHom}}
\newcommand{\Ext}{\text{Ext}}
\newcommand{\End}{\text{End}}
\newcommand{\Sym}{\text{Sym}}
\newcommand{\la}{\langle}
\newcommand{\ra}{\rangle}
\newcommand{\mnc}{\ensuremath{\boldsymbol{\mathcal{M}}^d(\C^n)}}
\newcommand{\tmc}[1]{\ensuremath{\mathcal{M}^d(\C^{#1})}}
\newcommand{\mdc}[1]{\ensuremath{\boldsymbol{\mathcal{M}}^d(\C^{#1})}}
\newcommand{\Der}{\mathbb{D}er}
\newcommand{\mb}[1]{\mathbb #1}
\newcommand{\mc}[1]{\mathcal #1}
\newcommand{\mf}[1]{\mathfrak #1}
\newcommand{\ms}[1]{\mathscr #1}
\newcommand{\drep}[1]{\textbf{Rep}_{#1}(A)}
\newcommand{\tr}{\text{Tr}}
\title{Moduli of sheaves on fourfolds as derived Lagrangian intersections}
\author{Nachiketa Adhikari}
\address{Department of Mathematics, University of Illinois at Urbana-Champaign, Urbana IL 61801, USA}
\email{na17@illinois.edu}
\author{Yun Shi}
\address{Department of Mathematics, Brandeis University, Waltham, MA 02453, USA}
\email{yunshi@brandeis.edu}
\begin{document}

\maketitle
\begin{abstract}
    We show that any $(-2)$-shifted symplectic derived scheme $\textbf{X}$ (of finite type over an algebraically closed field of characteristic zero) is locally equivalent to the derived intersection of two Lagrangian morphisms to a $(-1)$-shifted symplectic derived scheme which is the $(-1)$-shifted cotangent stack of a smooth classical scheme. This leads to the possibility of the following viewpoint that is, at least to us, new: any $n$-shifted symplectic derived scheme can be obtained, locally, by repeated derived Lagrangian intersections in a smooth classical scheme.
    
    We also give a separate proof of our main result in the case where the local Darboux atlas cdga for $\textbf{X}$ has an even number of generators in degree $(-1)$; in this case we strengthen the result by showing that $\textbf{X}$ is in fact locally equivalent to the derived critical locus of a shifted function, which we've been told is a folklore result in the field. We indicate the implications of this for derived moduli stacks of sheaves on Calabi-Yau fourfolds by spelling out the case when the fourfold is $\mathbb{C}^4$.
\end{abstract}

\section{Introduction}
It is well-known (see, for example, \cite[\S 2.2]{bbs}, \cite[\S 2]{ricolfisavvas}) that the moduli stack $\tmc 3$ of coherent sheaves of length $d$ on $\mb C^3$ can be written as the critical locus of a regular function on an ambient stack i.e. let $R := End_{\mb C} (\mb C^d)^3$, the affine space of triples of $d \times d$ matrices; the function $f: R \to \mb C$ defined by
\[ f(A, B, C) = \tr (A[B, C]) \]
descends to the quotient stack $\mf M = [R/GL_d]$ of $R$ by the conjugation action of $GL_d$, and $\tmc 3$ can be identified with the critical locus of $f$ inside $\mf M$.

One can rephrase this as follows: the cotangent stack $T^*\mf M$ (i.e. the total space of the cotangent bundle of $\mf M$, which is naturally a symplectic stack) contains two Lagrangian substacks isomorphic to $\mf M$, one of them obtained by embedding $\mf M$ into it via the zero section, and the other by embedding via the section $d_{dR}f$. Then $\tmc 3$ is the intersection of these two Lagrangians inside $\mf M$, so we have a pullback diagram
\begin{equation}\label{eq:crit3}
\begin{tikzcd}
	{\tmc 3} & {\mf M} \\
	{\mf M} & {T^*\mf M}
	\arrow["d_{dR}f", from=1-2, to=2-2]
	\arrow["z", from=2-1, to=2-2]
	\arrow[from=1-1, to=2-1]
	\arrow[from=1-1, to=1-2]
	\arrow["\lrcorner"{anchor=center, pos=0.125}, draw=none, from=1-1, to=2-2].
\end{tikzcd}
\end{equation}
Being a Lagrangian intersection, the \textit{derived} stack $\mdc 3 := \mf M \times^h_{T^*\mf M} \mf M$ carries a canonical $(-1)$-shifted symplectic structure by the following theorem:
\begin{thm}[{\cite[Theorem 2.9]{ptvv}}]\label{thm:lagint}
    Let $\bf F$ be an $n$-shifted symplectic derived Artin stack and ${\bf X} \to {\bf F}$, ${\bf Y} \to {\bf F}$ two morphisms of derived Artin stacks carrying Lagrangian structures. Then the derived stack ${\bf X} \times^h_{\bf F} {\bf Y}$ has a canonical $(n-1)$-shifted symplectic structure.
\end{thm}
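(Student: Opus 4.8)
The plan is to run the by-now-standard two-step argument: first produce a closed $2$-form of degree $n-1$ on $\mathbf{Z} := \mathbf{X}\times^h_{\mathbf{F}}\mathbf{Y}$ by manipulating paths in the space of closed forms, and then check non-degeneracy by comparing the tangent fibre sequence of $\mathbf{Z}$ with its shifted dual. Throughout I assume the definitions of shifted symplectic and Lagrangian structures, and in particular that a Lagrangian structure on a map $h\colon\mathbf{L}\to\mathbf{F}$ consists of an isotropic structure---a path from $h^*\omega$ to $0$ in the space $\mathcal{A}^{2,\mathrm{cl}}(\mathbf{L},n)$ of closed $2$-forms of degree $n$---whose associated map $\mathbb{T}_{\mathbf{L}/\mathbf{F}}\to\mathbb{L}_{\mathbf{L}}[n-1]$ is an equivalence.

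For the form, write $p\colon\mathbf{Z}\to\mathbf{X}$, $q\colon\mathbf{Z}\to\mathbf{Y}$ for the projections and $\pi\colon\mathbf{Z}\to\mathbf{F}$ for the common composite provided by the homotopy pullback, so that $\pi^*\omega\in\mathcal{A}^{2,\mathrm{cl}}(\mathbf{Z},n)$. Pulling back the isotropic structure of $\mathbf{X}$ along $p$ gives a path $\gamma_X$ from $\pi^*\omega$ to $0$, and similarly $\mathbf{Y}$ supplies a path $\gamma_Y$ from $\pi^*\omega$ to $0$. Concatenating $\gamma_X$ with the reverse of $\gamma_Y$ yields a loop at $0\in\mathcal{A}^{2,\mathrm{cl}}(\mathbf{Z},n)$. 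I would then invoke the defining description of these form-spaces as Dold--Kan realisations of complexes for which based looping shifts the internal degree by one, giving a natural equivalence $\Omega_0\,\mathcal{A}^{2,\mathrm{cl}}(\mathbf{Z},n)\simeq\mathcal{A}^{2,\mathrm{cl}}(\mathbf{Z},n-1)$; under it the loop becomes the sought closed $2$-form $\omega_{\mathbf{Z}}$ of degree $n-1$.

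For non-degeneracy, the homotopy-pullback description of $\mathbf{Z}$ gives $\mathbb{T}_{\mathbf{Z}}\simeq\mathrm{fib}\big(p^*\mathbb{T}_{\mathbf{X}}\oplus q^*\mathbb{T}_{\mathbf{Y}}\to\pi^*\mathbb{T}_{\mathbf{F}}\big)$, and I must show that the contraction $\iota_{\omega_{\mathbf{Z}}}\colon\mathbb{T}_{\mathbf{Z}}\to\mathbb{L}_{\mathbf{Z}}[n-1]$ it induces is an equivalence. The idea is to realise $\iota_{\omega_{\mathbf{Z}}}$ as the essential rung of a commuting ladder comparing the fibre sequence above with the $[n-1]$-shift of its dual $\pi^*\mathbb{L}_{\mathbf{F}}\to p^*\mathbb{L}_{\mathbf{X}}\oplus q^*\mathbb{L}_{\mathbf{Y}}\to\mathbb{L}_{\mathbf{Z}}$. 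The comparison maps are assembled from three equivalences: the symplectic duality $\omega_{\mathbf{F}}\colon\mathbb{T}_{\mathbf{F}}\simeq\mathbb{L}_{\mathbf{F}}[n]$, and the two Lagrangian non-degeneracy conditions, which after base change for the (co)tangent complex read $\mathbb{T}_{\mathbf{Z}/\mathbf{Y}}\simeq p^*\mathbb{L}_{\mathbf{X}}[n-1]$ and $\mathbb{T}_{\mathbf{Z}/\mathbf{X}}\simeq q^*\mathbb{L}_{\mathbf{Y}}[n-1]$. Since all three inputs are equivalences, the long exact sequence on cohomology sheaves (a derived five-lemma) forces $\iota_{\omega_{\mathbf{Z}}}$ to be an equivalence as well.

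The step I expect to be the main obstacle is verifying that the form built in the first stage is the one inducing the pairing used in the second: $\omega_{\mathbf{Z}}$ is produced globally and homotopy-theoretically, as a concatenation of paths in a form-space, whereas non-degeneracy is phrased through contraction maps on tangent complexes, and matching the two amounts to tracking the two isotropic homotopies coherently through the loop-space identification. Equally delicate is setting up the ladder itself: the defining fibre sequence of $\mathbb{T}_{\mathbf{Z}}$ and its dual point in opposite directions, so the $\omega_{\mathbf{F}}$-rung lands in degree $[n]$ rather than $[n-1]$, and the pairing genuinely entangles the $\mathbf{X}$- and $\mathbf{Y}$-directions rather than respecting the two relative fibre sequences separately. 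Establishing this compatibility at the $\infty$-categorical level---together with base change for the cotangent complex and the naturality of $\Omega_0\,\mathcal{A}^{2,\mathrm{cl}}(-,n)\simeq\mathcal{A}^{2,\mathrm{cl}}(-,n-1)$---is the real content; the remaining manipulations with fibre sequences are routine.
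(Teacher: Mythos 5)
This statement is quoted in the paper as \cite[Theorem 2.9]{ptvv} and is not proved there, so there is no in-paper argument to compare against; your sketch correctly reproduces the original PTVV proof. Both halves are right: the degree-$(n-1)$ form arises exactly as you say, from the loop at $0$ obtained by concatenating the two pulled-back isotropic paths under the equivalence $\Omega_0\,\mathcal{A}^{2,\mathrm{cl}}(\mathbf{Z},n)\simeq\mathcal{A}^{2,\mathrm{cl}}(\mathbf{Z},n-1)$, and non-degeneracy follows from the ladder comparing $\mathbb{T}_{\mathbf{Z}}\to p^*\mathbb{T}_{\mathbf{X}}\oplus q^*\mathbb{T}_{\mathbf{Y}}\to\pi^*\mathbb{T}_{\mathbf{F}}$ with its shifted dual via $\omega_{\mathbf{F}}$ and the two base-changed Lagrangian equivalences, plus the five lemma.
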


On the other hand, we also have the following theorem.
\begin{thm}[{\cite[Theorem 0.1]{ptvv}, \cite[\S 1]{bd19}}]\label{thm:bdmain}
    Let $X$ be a smooth Calabi-Yau variety of dimension $n$. Then the derived stack $\ensuremath{\boldsymbol{\mathcal{M}}^d(X)}$ has a canonical $(2-n)$-shifted symplectic structure.
\end{thm}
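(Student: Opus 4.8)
The plan is to realize $\boldsymbol{\mathcal{M}}^d(X)$ as (an open substack of) the derived moduli stack of objects in the Calabi-Yau dg-category of perfect complexes on $X$, and then to invoke the general transgression principle of PTVV and Brav--Dyckerhoff: the moduli of objects in an $n$-Calabi-Yau category is $(2-n)$-shifted symplectic. Equivalently, in the proper case one realizes $\boldsymbol{\mathcal{M}}^d(X)$ as the mapping stack $\mathbf{Map}(X, \Perf)$ and uses the AKSZ construction. Two ingredients are needed: that the target $\Perf$ is canonically $2$-shifted symplectic, and that $X$ being Calabi-Yau of dimension $n$ furnishes the orientation of degree $-n$ needed to lower the shift by $n$.

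First I would recall the $2$-shifted symplectic structure on $\Perf$. Its tangent complex at a point $E$ is $\RHom(E,E)[1]$, and Yoneda composition followed by the trace $\tr \colon \RHom(E,E) \to \cO$ gives a nondegenerate pairing
\[ \RHom(E,E)[1] \otimes \RHom(E,E)[1] \longrightarrow \cO[2]. \]
Assembling these into closed de Rham data and checking closedness and nondegeneracy yields the canonical form $\omega_{\Perf}$; this is the first existence result of PTVV.

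Next I would supply the orientation. The mapping-stack theorem asserts that for $\mathbf{F}$ that is $m$-shifted symplectic and $X$ an $\cO$-compact stack of dimension $n$ with an orientation of degree $-n$, the stack $\mathbf{Map}(X,\mathbf{F})$ inherits an $(m-n)$-shifted symplectic form, obtained by pulling back $\omega_{\mathbf{F}}$ along evaluation and integrating over $X$. When $X$ is smooth and proper, the Calabi-Yau isomorphism $\omega_X \cong \cO_X$ together with Serre duality provides exactly this orientation via the integration $H^n(X,\cO_X) \to k$. When $X$ is non-proper (the case $X = \C^4$ of interest here), one instead restricts to compactly-supported sheaves, equivalently works with the relative Calabi-Yau structure of Brav--Dyckerhoff on the category of such sheaves, where the fundamental class lives in compactly-supported cyclic homology.

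Finally I would identify $\boldsymbol{\mathcal{M}}^d(X)$ with an open substack of this moduli of objects: an $S$-family of length-$d$ sheaves on $X$ is a perfect complex on $X \times S$ (smoothness of $X$ ensures coherent sheaves are perfect) with the prescribed support and numerical invariants, so the derived structures agree and $\boldsymbol{\mathcal{M}}^d(X)$ inherits the restricted form. Taking $m = 2$ then produces the asserted canonical $(2-n)$-shifted symplectic structure. I expect the main obstacle to be the orientation step and the proof that transgression preserves closedness and nondegeneracy: nondegeneracy of the transgressed pairing on $\RHom_X(\cE,\cE)[1]$ is precisely Serre duality on $X$ rendered degree-$(-n)$ by the trivialization of $\omega_X$, which is where the Calabi-Yau hypothesis does the essential work.
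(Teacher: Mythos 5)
This theorem is quoted in the paper purely as a citation to \cite{ptvv} and \cite{bd19}; the paper contains no proof of its own. Your proposal is a correct reconstruction of exactly the argument in those references --- the $2$-shifted symplectic form on $\Perf$, transgression along an orientation of degree $-n$ supplied by the Calabi--Yau structure (with the Brav--Dyckerhoff compactly-supported variant handling non-proper $X$ such as $\C^4$), and the identification of $\boldsymbol{\mathcal{M}}^d(X)$ with an open substack of the moduli of objects --- so it takes essentially the same approach as the cited proofs.
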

For $\mdc 3$, the two $(-1)$ shifted symplectic structures guaranteed by the two theorems above are equivalent (see \cite[\S 4]{katzshi}, \cite[\S 3.5]{ricolfisavvas}). 
Thus \Cref{eq:crit3} can be interpreted as the special case of (the classical version of) a local converse to \Cref{thm:lagint}: that any $(-1)$-shifted symplectic derived scheme is locally a Lagrangian intersection.  
Motivated by these results about $\boldsymbol{\mathcal{M}}^d(\mathbb{C}^3)$ and $\boldsymbol{\mathcal{M}}^d(X)$, we are interested in the following questions in this paper:
\begin{que}\label{q1}
    Does a picture similar to \Cref{eq:crit3} exist for $\mdc 4$, i.e. can $\mdc 4$ be realized as a derived critical locus?
\end{que}
 Clearly, it cannot be \textit{too} similar: $\mdc 4$ has a $(-2)$-shifted symplectic structure by \Cref{thm:bdmain}, not a $(-1)$-shifted one. 
More generally:
 \begin{que}\label{q2}
     Given a $(-2)$-shifted symplectic derived scheme, when can we say that it is the derived intersection of two Lagrangians inside a $(-1)$-shifted symplectic derived scheme?
 \end{que}

We obtain the following answers:
\begin{itemize}
    \item for \Cref{q1}: ``Yes, if we replace $\mf M$ with a derived stack, and $T^*\mf M$ with its \textit{shifted cotangent stack}'';
    \item for \Cref {q2}: ``Locally, always!''
\end{itemize}

The answer to \Cref{q1} is a consequence of our first main result. 
\begin{thm}(\Cref{prop:evencase})\label{thm:evenmain}
   Let $\bf X$ be an affine $(-2)$-shifted symplectic scheme in Darboux form which has an even number of degree $(-1)$ generators. Then $\bf X$ is equivalent to the derived critical locus of a shifted function defined on a quasi-smooth derived scheme. 
\end{thm}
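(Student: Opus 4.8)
The plan is to read an explicit presentation of $\bX$ off the Darboux cdga and recognise it as a derived critical locus. First I would record the shape of the local model: write $U$ for the smooth scheme carrying the degree $0$ coordinates $x_1,\dots,x_n$, let $\xi_1,\dots,\xi_m$ be the degree $(-1)$ generators and $y_1,\dots,y_n$ the degree $(-2)$ generators dual to the $x_i$, so that the symplectic form is $\omega=\sum_i \dd x_i\,\dd y_i+\tfrac12\sum_{j,l}q_{jl}\,\dd\xi_j\,\dd\xi_l$ for a nondegenerate symmetric matrix $q$. The differential is Hamiltonian, $d=\{H,-\}$ for a potential $H$ of degree $(-1)$ satisfying the classical master equation $\{H,H\}=0$. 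The key initial observation is purely a matter of degree: since the only generators of negative degree are the $\xi_j$ (degree $-1$) and the $y_i$ (degree $-2$), and there are no generators of positive degree, a function of degree $(-1)$ must be linear in the $\xi_j$ and independent of the $y_i$. Hence $H=\sum_j P_j(x)\,\xi_j$ for regular functions $P_j$ on $U$, and a short computation identifies the master equation with the isotropy condition $\sum_{j,l}q^{jl}P_jP_l=0$, where $q^{jl}$ is the inverse matrix.

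Next I would use the hypothesis that $m$ is even. Over our algebraically closed base field the symmetric form $q$ can be brought to hyperbolic form by a constant linear change of the $\xi_j$ (which fixes the $x_i,y_i$ and introduces no cross terms), so I may split the degree $(-1)$ generators into two halves $w_1,\dots,w_{m/2}$ and $z_1,\dots,z_{m/2}$ with $q$ pairing $w_k$ with $z_k$. Writing $H=\sum_k a_k(x)w_k+\sum_k b_k(x)z_k$, the master equation becomes $\sum_k a_kb_k=0$: the section $P=(a,b)$ is isotropic. The crux of the argument is then to straighten this isotropic section into a maximal isotropic sub-bundle. I would produce an $x$-dependent orthogonal transformation of the $\xi$-variables carrying $(a(x),b(x))$ into the span of the $w_k$, and extend it to a genuine symplectomorphism of the full Darboux cdga by correcting the degree $(-2)$ variables $y_i$ to absorb the $\dd x\,\dd\xi$ cross-terms it introduces. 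After this change of coordinates $H=\sum_k a_k(x)w_k$ depends only on the $x_i$ and on the $w$-half of the degree $(-1)$ generators.

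With $H$ in this reduced form I would let $V$ be the quasi-smooth derived scheme with coordinates $x_i$ in degree $0$ and $w_k$ in degree $(-1)$ and zero internal differential (so that $\mathbb{L}_V$ has tor-amplitude $[-1,0]$), and set $\Phi:=H=\sum_k a_k(x)w_k$, a shifted function of degree $(-1)$ on $V$. A direct computation of the derived critical locus $\mathrm{Crit}(\Phi)=V\times^h_{T^*[-1]V}V$ then matches $\bX$ generator by generator: the Koszul duals of the $x_i$ reappear in degree $(-2)$ as the $y_i$ with $d y_i=\partial\Phi/\partial x_i=\sum_k(\partial_{x_i}a_k)\,w_k$, and the Koszul duals of the $w_k$ reappear in degree $(-1)$ as the $z_k$ with $d z_k=\partial\Phi/\partial w_k=a_k(x)$, which is exactly the differential of the Darboux model. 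Finally I would check that the canonical $(-2)$-shifted symplectic structure carried by $\mathrm{Crit}(\Phi)$, obtained from \Cref{thm:lagint} applied to the two Lagrangians (the zero section and $\dd\Phi$) in the $(-1)$-shifted symplectic $T^*[-1]V$, agrees with $\omega$; this is immediate from the hyperbolic normal form.

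I expect the main obstacle to be the straightening step: constructing the $x$-dependent orthogonal transformation algebraically and, above all, extending it to an honest symplectomorphism of the whole cdga rather than merely of the degree $(-1)$ part. Care is needed with the signs coming from the oddness of the $\xi_j$, and with the locus where the section $P$ vanishes and the rotation family degenerates; since the statement is local and affine one can shrink $U$ (or observe that $H$ vanishes there, so $\bX$ is trivially the critical locus of the zero function) to handle this. Making the family orthogonal, algebraic and symplectic simultaneously is where the real work lies.
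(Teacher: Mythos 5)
Your reduction to the hyperbolic normal form $H=\sum_k a_k w_k+\sum_k b_k z_k$ with master equation $\sum_k a_kb_k=0$ agrees with the paper's starting point, but everything after that hinges on the straightening step, and that step is where the argument breaks down. The $x$-dependent orthogonal family you need is only defined where the isotropic section $P=(a,b)$ is nonvanishing, and the vanishing locus of $P$ is not a harmless degenerate set: since $H^0(A)=A(0)/(a_k,b_k)$, it is precisely the classical truncation $t_0(\mathbf{X})$. Consequently neither of your fallbacks works. Shrinking $U$ away from $Z(P)$ leaves you with the empty derived scheme, and the observation that ``$H$ vanishes there'' only says the coefficients vanish at those points, not that $H=0$ in the cdga near them; the derived structure of $\mathbf{X}$ at a point of $t_0(\mathbf{X})$ is governed by the Taylor expansion of $(a,b)$, not by their values, so $\mathbf{X}$ is certainly not the critical locus of the zero function there. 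Even away from $Z(P)$ you would still need to produce the orthogonal family algebraically and promote it to a morphism of cdgas (compatible with the differential, which also involves the Hamiltonian, not just with the grading and the form). You correctly flag this as ``where the real work lies,'' but no argument is supplied, and I do not see how to supply one in general.

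The fix --- and the paper's actual route --- is to dispense with straightening altogether by letting the quasi-smooth base carry the $b$-half of the Hamiltonian in its \emph{differential} rather than trying to rotate it away. Take $\mathbf{Y}=\BSpec B$ with $B$ freely generated over $A(0)$ by the $w_k$ in degree $-1$ and internal differential $dw_k=b_k$ (not zero), and take $\Psi=\sum_k a_kw_k\in B^{-1}$; the master equation $\sum_k a_kb_k=0$ is exactly the statement that $d\Psi=0$, so $d_{dR}\Psi$ is a genuine section of $T^*[-1]\mathbf{Y}$. The derived critical locus $\mathbf{Y}\times^h_{T^*[-1]\mathbf{Y}}\mathbf{Y}$ then reproduces $A$ on the nose: the Koszul duals of the $w_k$ give back the $z_k$ with $dz_k=a_k$, and those of the $x_j$ give the degree $-2$ generators with differential $\sum_k(\partial a_k/\partial x_j)w_k+(\partial b_k/\partial x_j)z_k$. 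The comparison of the residue symplectic structure with $\omega$ is then carried out with explicit isotropic structures on the two sections. Your final verification paragraph is essentially this computation in the special case $b=0$; the point you are missing is that it works verbatim for arbitrary $b$ once $b$ is absorbed into the differential of $\mathbf{Y}$, so no normalization of $H$ is ever needed.
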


While working on this paper, the second author was informed at the CATS7 conference at CIRM that the above is a folklore result in the field. We are very grateful for all the conversations and comments on this.

The answer to \Cref{q2} is our second main result.

\begin{thm}(\'etale version: \Cref{thm:main}, Zariski version: \Cref{thm:mainzariski})\\\label{thm:generalmain}
Let $\bf X$ be a $(-2)$-shifted symplectic scheme. Then, locally $\bf X$ is equivalent to the derived intersection of two Lagrangians inside a $(-1)$-shifted symplectic derived scheme which is the shifted cotangent stack of a smooth classical scheme. 
\end{thm}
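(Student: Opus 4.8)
The plan is to reduce to an explicit local normal form and then build the Lagrangian intersection by hand. First I would invoke the $(-2)$-shifted Darboux theorem already used in the statement to replace $\bX$, \'etale- (resp.\ Zariski-) locally, by $\Spec A$ with $A$ a standard-form cdga whose generators sit in degrees $0,-1,-2$: a smooth classical base $U = \Spec R$ contributing degree-$0$ coordinates $x_i$ ($i=1,\dots,n$); degree-$(-1)$ generators $z_1,\dots,z_m$ carrying a nondegenerate \emph{symmetric} pairing $Q$ (the self-duality of $\bL$ in odd degree); and degree-$(-2)$ generators $y_i$ dual to the $x_i$. The symplectic form is the standard $\omega = \sum_i \dd x_i\, \dd y_i + \tfrac12\sum_{a,b} Q_{ab}\, \dd z_a\, \dd z_b$, and the differential is Hamiltonian, $d = \{H,-\}$ for a degree-$(-1)$ element $H = \sum_a s_a(x)\,z_a$; the classical master equation $\{H,H\}=0$ is exactly the isotropy condition $Q(s,s)=0$ on the section $s=(s_a)$, and it yields $d z_a = s_a$ and $d y_i = \sum_a (\partial_{x_i} s_a)\, z_a$. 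The whole problem is then to reconstruct this data as a derived intersection of Lagrangians.

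Next I would set up the ambient $(-1)$-shifted symplectic scheme. I would choose a smooth classical scheme $V$ (built from $U$ together with a choice of polarization data for $Q$, enlarging $U$ by the coordinates of a maximal isotropic subspace of $(\C^m,Q)$) and take its $(-1)$-shifted cotangent $T^*[-1]V = \mathbf{Spec}_V\, \Sym_{\cO_V}(\bT_V[1])$. By \Cref{thm:lagint}'s ambient framework this carries a canonical $(-1)$-shifted symplectic structure whose zero section $V \hookrightarrow T^*[-1]V$ is Lagrangian; this is my first Lagrangian $L_1$. For the second Lagrangian $L_2$ I would feed in the data $(s,Q)$. When $m$ is even, $Q$ admits a Lagrangian polarization $\C^m = W \oplus W^*$, I can split $z = (u,v)$ into dual pairs of odd coordinates, and pairing $s$ against this splitting rewrites $H$ as the de Rham differential of a genuine $(-1)$-shifted superpotential on the quasi-smooth scheme $Z(s_W)\subset V$; this is precisely the derived-critical-locus description of \Cref{thm:evenmain}, and a derived critical locus on a quasi-smooth base $Z(\sigma)$ over smooth classical $V$ spreads out, via the Lagrangian correspondence attached to $\sigma$, to a Lagrangian intersection inside $T^*[-1]V$. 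By \Cref{thm:lagint} the intersection $L_1 \times^h_{T^*[-1]V} L_2$ is then automatically $(-2)$-shifted symplectic.

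The bulk of the work, and the main obstacle, is the odd case, where $(\C^m,Q)$ has no Lagrangian polarization: there is a leftover anisotropic line, $H$ fails to be exact, there is no superpotential, and \Cref{thm:evenmain} does not apply, so $L_2$ cannot be taken to be the graph of the de Rham differential of a shifted function. One might hope to reduce to the even case by adjoining an auxiliary degree-$(-1)$ generator to flip the parity, but such a stabilization would have to be carried out through a \emph{contractible $(-2)$-shifted symplectic summand} (matched generators in degrees $0,-1,-2$ with an acyclic differential) so as to disturb neither the equivalence class of $\bX$ nor its symplectic form, and arranging this compatibly is itself delicate. I therefore expect the robust route to be the direct construction of $L_2$ as a genuinely derived Lagrangian morphism into $T^*[-1]V$ realizing the anisotropic direction, with its isotropic structure supplied by the identity $Q(s,s)=0$. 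In all cases the truly hard part is the symplectic comparison: I must verify that the $(-2)$-shifted form that \Cref{thm:lagint} places on $L_1 \times^h_{T^*[-1]V} L_2$ agrees \emph{up to equivalence} with the standard $\omega$ above, rather than merely that the two underlying derived schemes are equivalent.

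Finally I would identify the intersection cdga with $A$ and close the comparison. Computing $L_1 \times^h_{T^*[-1]V} L_2$ as a Koszul-type derived fiber product, the degree-$0$ and degree-$(-1)$ generators come from $T^*[-1]V$ and $L_2$, while the derived (Tor) generators introduced by the intersection should supply exactly the degree-$(-2)$ generators $y_i$ together with the differential $d y_i = \sum_a (\partial_{x_i} s_a)\,z_a$, matching $A$ on the nose. For the symplectic structure I would either appeal to uniqueness of the Darboux normal form (two standard-form $(-2)$-shifted structures with equivalent underlying data are equivalent) or compare tautological/Liouville forms directly, tracking how the induced pairing restricts to $\sum_i \dd x_i\,\dd y_i + \tfrac12 Q(\dd z,\dd z)$. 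The \'etale versus Zariski dichotomy of the two versions of the statement should enter only through whether the Darboux form and the polarization of $Q$ can be arranged Zariski-locally or only \'etale-locally; I expect the symplectic matching, and not this dichotomy, to be the genuinely delicate step.
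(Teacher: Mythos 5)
Your skeleton is right in outline --- Darboux form, an ambient $(-1)$-shifted cotangent, the zero section as one Lagrangian, the master equation $Q(s,s)=0$ supplying the isotropic structure on the other, and Tor generators of the derived fiber product producing the degree-$(-2)$ variables --- but the proposal has a genuine gap exactly where you flag it. You split into even and odd parity and only resolve the even case (via a polarization of $Q$ and a superpotential), leaving the odd case as something you ``expect'' to work by a direct construction. The paper's proof of the general statement makes no parity distinction at all and uses no polarization: the ambient is $T^*[-1]\BSpec A(0)$ for the smooth classical degree-$0$ base $A(0)$ itself, the first Lagrangian is the zero section, and the second Lagrangian is the quasi-smooth derived scheme $\BSpec A(1)$ (generated over $A(0)$ by \emph{all} the odd variables $y_j$ with $dy_j=f_j/2$), mapping in by $\alpha_k\mapsto\sum_j(\partial f_j/\partial x_k)\,y_j$, with isotropic structure the full quadratic form $\nu=-\sum_j d_{dR}y_j\,d_{dR}y_j$. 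The content you do not attempt is the verification that $\nu$ is actually \emph{Lagrangian}, i.e.\ that $\Theta_\nu\colon \mb T_q\to\mb L_{\BSpec A(1)}[-2]$ is a quasi-isomorphism; this requires writing out the cone computing $\mb T_q$ explicitly and constructing chain-level comparison maps, and it is where most of the work in the paper lives. Your stabilization idea (adjoining an auxiliary odd generator to flip parity) is a dead end the paper does not need.

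A second, more local problem: your proposed ambient $V$, obtained by ``enlarging $U$ by the coordinates of a maximal isotropic subspace of $(\C^m,Q)$,'' is not a smooth \emph{classical} scheme --- those coordinates sit in degree $-1$, so you get a quasi-smooth derived scheme. That is precisely what happens in the paper's separate even-case argument (\Cref{prop:evencase}), which buys the stronger ``derived critical locus'' conclusion at the cost of an ambient $T^*[-1]{\bf Y}$ with ${\bf Y}$ quasi-smooth; it does not prove the statement as written, whose ambient must be the shifted cotangent of a smooth classical scheme. Keeping $V=\Spec A(0)$ and pushing all odd variables onto the second Lagrangian is what makes the general, parity-independent argument go through. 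Your final step (comparing the residue form with $\omega$) is also done in the paper by a direct concatenation of the two isotropic homotopies rather than by appeal to uniqueness of Darboux form, but that part of your plan is workable.
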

This result gives a local converse to \Cref{thm:lagint} for general $(-2)$-shifted symplectic derived schemes. It is worth noting that the proofs and constructions in both theorems are different even for the class of derived schemes where both apply.

As an application of the first main result, we also obtain a characterization of $\mdc 4$ as a derived critical locus:
\begin{thm}(\Cref{thm:maindim4})
    The derived moduli stack of length $d$ sheaves $\mdc 4$ admits a smooth atlas $\BSpec A_d \to \mdc 4$ such that $\BSpec A_d$ embeds into $\BSpec B_d$ for a closed subalgebra $B_d \subset A_d$ ,and $\BSpec B_d$ is \'etale locally equivalent to the derived critical locus of a shifted function.
\end{thm}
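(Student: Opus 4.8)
The plan is to build the atlas by hand from the classical description of length-$d$ sheaves on $\C^4$, and then to isolate inside it the $(-2)$-shifted symplectic scheme in Darboux form to which \Cref{thm:evenmain} applies. First I would identify $\mdc{4}$ with the quotient $[\mathbf{Rep}_d/\GL_d]$ of the derived representation scheme of $\C[x_1,x_2,x_3,x_4]$, where a point is a tuple $(B_1,B_2,B_3,B_4)\in(\gl d)^4$ of (derived) commuting matrices and $\GL_d$ acts by simultaneous conjugation. Writing out the Koszul resolution of $\C[x_1,\dots,x_4]$ (whose Betti numbers are the ranks $1,4,6,4,1$ of $\Lambda^\bullet\C^4$) together with the cyclic Calabi--Yau structure of $\C^4$, one presents the atlas as $\BSpec A_d$ for an explicit cdga $A_d$ whose generators are the matrix entries of variables in degrees $0,-1,-2,-3$, in numbers $4d^2,\,6d^2,\,4d^2,\,d^2$: the degree-$0$ generators are the entries of the $B_i$, the $6d^2$ degree-$(-1)$ generators are dual to the $\binom 42=6$ commutators $[B_i,B_j]$, and the degree-$(-2)$ and degree-$(-3)$ generators are dual to $\Lambda^3\C^4$ and $\Lambda^4\C^4$. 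The differential encodes the commutators and the Calabi--Yau pairing, and since $\mathbf{Rep}_d\to[\mathbf{Rep}_d/\GL_d]$ is a $\GL_d$-torsor it is smooth and surjective, giving the required smooth atlas $\BSpec A_d\to\mdc{4}$.

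Next I would let $B_d\subset A_d$ be the closed subalgebra generated by the degree $0,-1,-2$ generators, i.e. the one obtained by discarding the $d^2$ generators in degree $-3$. These discarded generators are odd and sit in negative degree, with differential a moment-map element $\mu\in(B_d)_{-2}$ for the $\GL_d$-action; adjoining them is therefore a Koszul-type derived complete intersection, so the inclusion $B_d\hookrightarrow A_d$ is an isomorphism on $\pi_0$ and induces a closed immersion $\BSpec A_d\hookrightarrow\BSpec B_d$ cutting out the derived vanishing of $\mu$. The remaining generators of $B_d$ realize the $(-2)$-shifted symplectic Darboux form for the symplectic structure on $\mdc{4}$ supplied by \Cref{thm:bdmain}: the $4d^2$ degree-$0$ generators pair with the $4d^2$ degree-$(-2)$ generators, while the $6d^2$ degree-$(-1)$ generators carry the nondegenerate \emph{symmetric} self-pairing coming from Serre duality on $\Ext^2(F,F)$ for the Calabi--Yau fourfold $\C^4$. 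Thus $\BSpec B_d$ is an affine $(-2)$-shifted symplectic scheme in Darboux form.

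Finally, the number of degree-$(-1)$ generators of $B_d$ is $6d^2$, which is \emph{even} for every $d$ (the factor $6=\binom 42$ alone forces evenness). Hence \Cref{thm:evenmain} applies to $\BSpec B_d$ and shows that, étale locally, $\BSpec B_d$ is equivalent to the derived critical locus of a shifted function on a quasi-smooth derived scheme. Combined with the construction of the atlas and the closed immersion $\BSpec A_d\hookrightarrow\BSpec B_d$, this is exactly the assertion of the theorem.

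The main obstacle I expect is the second step: verifying that discarding the top-degree generators really yields a scheme in the \emph{standard} Darboux form demanded by \Cref{thm:evenmain}, and in particular that the induced self-pairing on the degree-$(-1)$ generators is symmetric and nondegenerate. This requires matching the explicit cdga coming from the Koszul/Calabi--Yau resolution against the normal form of the Darboux theorem, and checking that the $(-2)$-shifted symplectic structure on the stack $\mdc{4}$—which is symplectic as a stack, whereas the smooth atlas a priori carries only a Lagrangian-type structure—localizes to $\BSpec B_d$ as described. Confirming that the degree-$(-3)$ generators impose the derived vanishing of $\mu$ and nothing more, so that $\BSpec A_d\hookrightarrow\BSpec B_d$ is genuinely a closed immersion, is a secondary but routine point.
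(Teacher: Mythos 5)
Your plan is essentially the paper's proof: the same cdga $A_d$ built from the Koszul/cobar resolution of $k[x_1,\ldots,x_4]$ with matrix-entry generators in degrees $0,-1,-2,-3$ of multiplicities $4d^2,6d^2,4d^2,d^2$, the same subalgebra $B_d$ obtained by discarding the degree-$(-3)$ generators, and the same appeal to \Cref{thm:evenmain} via the evenness of $6d^2$. The obstacle you flag is precisely where the paper's work lies, and it is discharged there by two explicit computations: first, $(\phi\circ h)^*\omega_{\mdc{4}}$ is identified with $\tr\big(\sum_i d_{dR}X_i\,d_{dR}S_{jkl}+\sum d_{dR}C_{ij}\,d_{dR}C_{kl}\big)$ via the Serre pairing on $\RHom_{A_d}(F_d,F_d)$ computed from Ginzburg's Calabi--Yau bimodule resolution of the polynomial ring; second, the Hamiltonian $\Phi=\tr\big(C_{12}[X_3,X_4]+C_{13}[X_4,X_2]+C_{14}[X_2,X_3]+C_{34}[X_1,X_2]+C_{42}[X_1,X_3]+C_{23}[X_1,X_4]\big)$ is exhibited and shown to satisfy the classical master equation $\tr\big([X_1,X_2][X_3,X_4]+[X_1,X_3][X_4,X_2]+[X_1,X_4][X_2,X_3]\big)=0$, which puts $(B_d,\omega_{B_d})$ in the Darboux form of \cite[Remark 5.11]{bbj} with the symmetric pairing on the $C_{ij}$'s you anticipated.
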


\subsection{Generalizations}\label{sec:generalizations}
Here are some potential generalizations of the results in this paper, which we leave for future exploration. 
\begin{itemize}

\item We expect all results about $(-2)$-shifted symplectic derived schemes can be extended to $(-2)$-shifted symplectic derived stacks in a manner analogous to the generalization of the Darboux Theorem in \cite{bbj} to the one in \cite{bbbbj}. Consequently, \Cref{thm:maindim4} can be rephrased as a statement about $\mdc{4}$ \textit{itself} being a derived critical locus (instead of needing to work on an atlas).

\item Preliminary calculations show that similar adaptation techniques can be applied to the Darboux Theorems in \cite{bbbbj} to obtain a general result along the lines of: \textit{``given $n < 0$, any $n$-shifted symplectic derived stack is locally equivalent to a Lagrangian intersection in an $(n+1)$-shifted symplectic derived stack''}. We have not included such a general result here but expect it to be true.
\end{itemize}
\subsection{Notation}
We work in the derived algebraic geometry framework introduced in \cite{hag1, hag2} and elsewhere. Here are some of the conventions we adopt in this paper:
\begin{itemize}
    \item We work over a field $k$ which is algebraically closed with characteristic $0$.
    \item All schemes are of finite type over $k$. We often refer to such schemes as \textit{classical} or ordinary to \textit{distinguish} them from derived schemes.
    \item We use the abbreviation \textit{dga} to refer to a differential graded algebra and \textit{cdga} to refer to a graded-commutative differential algebra. All such algebras $A$ are connective (i.e. $H^i(A) = 0$ for $i > 0$), and the differentials $d: A^m \to A^{m+1}$ are of degree +1.
    \item We use boldface letters to denote derived schemes and stacks e.g. {\bf X}, {$\bf \mc M$}. Accordingly, we use $\BSpec$ to denote affine derived schemes, and $\Spec$ to denote classical affine schemes.
    \item We denote by $t_0$ the functor that assigns to any derived stack its classical truncation.
    \item We denote by $QCoh({\bf F})$ the differential graded category (\textit{dg-category}) of quasicoherent sheaves on the derived stack $\bf F$, and by $L_{qcoh}({\bf F})$ its homotopy category; the latter is a triangulated category and, when $\bf F$ is equivalent to a classical scheme, is equivalent to its ordinary derived category of complexes of quasicoherent sheaves.
\end{itemize}

\subsection{Acknowledgments}
We are very grateful to Christopher Brav, Sheldon Katz, Sarunas Kaubrys, Young-Hoon Kiem, Tony Pantev, Hyeonjun Park and Marco Robalo for several illuminating conversations, suggestions and ideas. The first author was supported in part by NSF grants DMS-1802242 and DMS-2201203. The second author would like to acknowledge the organizers of the CATS7 conference at CIRM, where she was able to speak on relevant work and learned very helpful comments from the participants.

\section{Realization as a Lagrangian intersection}
In this section we prove our main results \Cref{thm:evenmain} and \Cref{thm:generalmain}. We begin by recalling the following definitions (where we have used some notation from \cite{ptvv}):
\begin{defn}(Definition 2.7, \cite{ptvv})
Let $f:{\bf X}\rightarrow {\bf F}$ be a morphism of derived Artin stacks and $\omega$ an $n$-shifted symplectic form on $\bf F$. An \textbf{isotropic structure on $f$ (relative to $\omega$)} is a path between $0$ and $f^*\omega$ in the space $\A^{2,cl}({\bf X},n)$. The space of isotropic structures on $f$ (relative to $\omega$) is defined to be the path space
\[
\text{Isot}(f,\omega):=Path_{0,f^*\omega}(\A^{2,cl}({\bf F},n)).
\]
\end{defn}
\begin{defn}(Definition 2.8, \cite{ptvv})
Let $f:{\bf X}\rightarrow {\bf F}$ be a morphism of derived Artin stacks and $\omega$ an $n$-shifted symplectic form on $\bf F$. An isotropic structure $h$ on $f$ is a \textbf{Lagrangian structure on $f$ (relative to $\omega$)} if the induced morphism
\[
\Theta_h: \bT_f \rightarrow \bL_{\bf X}[n-1]
\]
is a quasi-isomorphism of perfect complexes.
\end{defn}
\begin{defn}
    A pair $({\bf Z}, \omega)$ i.e. a derived scheme ${\bf Z}$ with an $(n-1)$-shifted symplectic form $\omega$ is called a \textbf{Lagrangian intersection} if there exists an $n$-shifted symplectic derived scheme $\bf F$ with Lagrangian structures on ${\bf X} \to {\bf F}$ and ${\bf Y} \to {\bf F}$, and morphisms ${\bf Z} \to {\bf X}$ and ${\bf Z}\to {\bf Y}$, inducing an equivalence ${\bf Z} \simeq {\bf X}\times^h_{\bf F} {\bf Y}$ such that the canonical $(n-1)$-shifted symplectic structure on $\bf Z$ is equivalent to $\omega$. We also say $\bf Z$ \textbf{is a Lagrangian intersection inside} $\bf X$.
\end{defn}
Here the canonical $(n-1)$-shifted symplectic structure on $\bf Z$ is the one in Theorem \ref{thm:lagint}. Here is the main result we will be proving in this section: 
\begin{thm} \label{thm:main}
    Let $\bf X$ be a derived scheme with a $(-2)$-shifted symplectic form $\omega$, and $x \in \bf X$. Then there exists a standard form cdga $A$ minimal at $p \in \Spec H^0(A)$ and an \'etale morphism $f:\BSpec A \to \bf X$ such that $f(p) = x$, and $(\BSpec A, f^*\omega)$ is a Lagrangian intersection.
\end{thm}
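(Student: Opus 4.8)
The plan is to first reduce to an explicit local model via the Darboux theorem and then exhibit that model as a derived fibre product of two Lagrangians inside a $(-1)$-shifted cotangent. First I would invoke the Darboux theorem of Brav--Bussi--Joyce \cite{bbj} to produce a standard form cdga $A$, minimal at $p \in \Spec H^0(A)$, together with an \'etale map $f : \BSpec A \to {\bf X}$ with $f(p) = x$ and $f^*\omega$ in Darboux normal form. In this normal form $A$ is freely generated over its smooth degree-zero part $\cO_U := A^0$ (so $U := \Spec A^0$ is a smooth classical scheme, say of dimension $m$) by generators $y_1,\dots,y_n$ in degree $-1$ and $z_1,\dots,z_m$ in degree $-2$; the symplectic form reads $f^*\omega = \sum_i \dd x_i\,\dd z_i + \tfrac12\sum_j (\dd y_j)^2$, and the Hamiltonian governing the differential is $H = \sum_j P_j y_j$ for functions $P_j \in \cO_U$, so that $d y_j = P_j$ and $d z_i = \sum_j (\partial_{x_i} P_j)\, y_j$. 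The classical master equation $\{H,H\}=0$ (equivalently $d^2=0$) is exactly the single scalar constraint $\sum_j P_j^2 = 0$ in $\cO_U$, and I would record this as the key algebraic identity driving the whole construction.

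Next I would set ${\bf F} := T^*[-1]U$, the $(-1)$-shifted cotangent of the smooth scheme $U$, with its canonical $(-1)$-shifted symplectic form; its functions are $\cO_U[\xi_1,\dots,\xi_m]$ with $|\xi_i|=-1$. I would introduce the quasi-smooth derived zero locus ${\bf V}_0 := \BSpec\big(\cO_U[y_1,\dots,y_n],\, dy_j = P_j\big)$ of the section $P=(P_j)$, and the degree $-1$ function $G := \sum_j P_j y_j$ on ${\bf V}_0$, which is $d$-closed precisely because $\sum_j P_j^2 = 0$. The two Lagrangian morphisms into $\bf F$ are then the zero section $U \to {\bf F}$, and the ``graph of the fibrewise derivative of $G$'' map ${\bf V}_0 \to {\bf F}$ given on functions by $\xi_i \mapsto \partial_{x_i} G = \sum_j (\partial_{x_i}P_j)\,y_j$; the very same identity $\sum_j P_j^2 = 0$ is what makes this last assignment a chain map. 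A short Koszul computation, resolving the zero section by adjoining degree $-2$ variables $\zeta_i$ with $d\zeta_i = \xi_i$, identifies the derived fibre product ${\bf V}_0 \times^h_{\bf F} U$ with $\BSpec A$, the new generators $\zeta_i$ matching the $z_i$ and the induced differential matching $dz_i = \sum_j (\partial_{x_i}P_j)\, y_j$ on the nose.

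The substantive work is in upgrading these two maps to genuine Lagrangian structures and then matching the two symplectic forms. For the zero section I would use the standard Lagrangian structure on the zero section of a shifted cotangent. For ${\bf V}_0 \to {\bf F}$ I would produce the isotropic structure from the closed function $G$: its de Rham differential is exact, hence closed, and I would check that the resulting path of closed $2$-forms trivialising the pullback of $f^*\omega_{\bf F}$ induces a quasi-isomorphism $\Theta_h : \bT_f \to \bL_{{\bf V}_0}[-2]$, i.e. nondegeneracy, by the usual critical-locus comparison of the Koszul-type complexes. Finally, invoking \Cref{thm:lagint} equips ${\bf V}_0 \times^h_{\bf F} U$ with a canonical $(-2)$-shifted symplectic form, and I would verify that under the identification with $\BSpec A$ this form agrees with $f^*\omega$.

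I expect the last matching step to be the main obstacle. The equivalence of derived schemes is essentially formal once the Koszul resolution is set up, but tracking the symplectic data---assembling the explicit isotropic homotopies on each Lagrangian, combining them via the construction of \Cref{thm:lagint}, and checking that the resulting closed $2$-form is not merely cohomologous but equal (up to the allowed homotopy) to the Darboux form, with all signs and the factor $\tfrac12$ on the self-dual degree $-1$ part correct---is where the real bookkeeping lies. A secondary subtlety is that the construction is genuinely a Lagrangian intersection rather than a derived critical locus: unlike the even-generator case of \Cref{thm:evenmain}, here $n$ may be odd, so the ambient must be the cotangent of the smooth scheme $U$ (where $G$ supplies only the fibrewise derivative) rather than the cotangent of ${\bf V}_0$, and I would take care that the Lagrangian structure on ${\bf V}_0 \to {\bf F}$ is built from $G$ in this partial sense.
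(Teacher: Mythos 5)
Your proposal follows essentially the same route as the paper's proof: your ${\bf F}=T^*[-1]U$ is the paper's $T^*[-1]\BSpec A(0)$, your ${\bf V}_0$ is its $\BSpec A(1)$, the two Lagrangian morphisms (zero section and the map $\xi_i\mapsto\sum_j(\partial_{x_i}P_j)y_j$) are the paper's $p$ and $q$, and the Koszul resolution adjoining the $\zeta_i$ with $d\zeta_i=\xi_i$ is exactly the paper's module $M$ with generators $\tau_k$. The only presentational difference is that the paper writes the isotropic structure on $q$ explicitly as $\nu=-\sum_j d_{dR}y_j\,d_{dR}y_j$ (rather than describing it as induced by $G$) and then verifies nondegeneracy and the matching of $R(\omega_L)$ with the Darboux form by the same explicit complex and path-concatenation computations you defer to.
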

The proof will proceed based on the Darboux form construction of \cite{bbj} as follows: the Darboux theorem \cite[Theorem 5.18]{bbj} guarantees the existence of $f, p$ and $A$ satisfying all the requirements above except the part about $(\BSpec A, f^*\omega)$ being a Lagrangian intersection. That is the part we focus on. We treat a special case, the one with an even number of generators in degree $(-1)$, first because of the straightforward nature of the construction. The general case is handled thereafter.

\subsection{The even case}
We first prove the theorem for the case where the number of generators of $A$ in degree $(-1)$ is even. 
Let $A$ be a cdga in Darboux form with a $(-2)$-shifted symplectic structure as in \cite[Remark 5.11]{bbj} i.e. let $m_0, m_1 \in \mb N$ and $A(0)$ be a smooth algebra of dimension $m_0$ over $k$, with $d_{dR}x_1, \ldots d_{dR}x_{m_0}$ forming a basis for $\Omega^1_{A(0)}$ over $A(0)$ for some $x_1,\ldots, x_{m_0} \in A(0)$; let $A$ be the commutative graded algebra freely generated over $A(0)$ by the variables
\begin{align*}
    y_1, \ldots, y_{m_1}, z_1, \ldots, z_{m_1} &\qquad\text{in degree -1}\\
    w_1, \ldots, w_{m_0} &\qquad\text{in degree -2}
\end{align*}
Define the Hamiltonian $\Phi \in A^{-1}$ by
\begin{equation}
    \Phi = \sum_{i=1}^{m_1} f_i y_i + g_i z_i
\end{equation}
where $f_1, \ldots, f_{m_1}, g_1, \ldots, g_{m_1}$ are elements of $A(0)$. The differential on $A$ is now defined as follows: $d$ is zero on $A(0) = A^0$;
\begin{align*}
    dy_i = \frac{\partial \Phi}{\partial z_i} = g_i && dz_i = \frac{\partial \Phi}{\partial y_i} = f_i\\
    &dw_j = \frac{\partial \Phi}{\partial x_j} = \sum_{i=1}^{m_1} \frac{\partial f_i}{\partial x_j} y_i + \frac{\partial g_i}{\partial x_j} z_i&
\end{align*}
The equality $d(dw_j) = 0$ is obtained by applying the operator $\partial/\partial x_j$ to the classical master equation:
\begin{equation}
    \sum_{i=1}^{m_1} f_i g_i = 0.
\end{equation}
Now define
\begin{equation}
    \omega^0 = \sum_{i=1}^{m_1}d_{dR}y_i\; d_{dR}z_i + \sum_{j=1}^{m_0}d_{dR}x_j\; d_{dR}w_j.
\end{equation}
Then $\omega := (\omega^0, 0, \ldots, 0, \ldots)$ is a $(-2)$-shifted symplectic structure on ${\bf X} := \BSpec A$.

Now we define a cdga $B$. The algebra $B(0)$ equals to $A(0)$. Let $B$ be the graded algebra freely generated over $B^0 = B(0) = A(0)$ by $y_1,\ldots, y_{m_1}$ in degree $-1$. Define the differentials to be $dy_i = g_i$. We adopt the notation ${\bf Y} := \BSpec B$. Then $\bf Y$ is quasi-smooth.

Let $T^*[-1] {\bf Y} := \BSpec \Sym_{\bf Y} (\mb T_{\bf Y}[1])$ be the $(-1)$-shifted cotangent stack of $\bf Y$. Then we have that $T^*[-1]{\bf Y} = \BSpec C$, where $C$ is the cdga freely generated over $B$ by the variables
\begin{align*}
    \beta_1, \ldots, \beta_{m_1} &\qquad\text{in degree 0}\\
    \alpha_1, \ldots, \alpha_{m_0} &\qquad\text{in degree -1},
\end{align*}
where we define $\alpha_j := \left(d_{dR}x_j\right)^\vee[1]$ and $\beta_i := \left(d_{dR}y_i\right)^\vee[1]$, with
\[  
d\alpha_j = -\sum_{i=1}^{m_1} \frac{\partial g_i}{\partial x_j} \beta_i.
\]
We define the following $(-1)$-shifted 2-form on $T^*[-1]{\bf Y}:$
\[ \omega_L^0 := \sum_{i=1}^{m_1}d_{dR}y_i\; d_{dR}\beta_i - \sum_{j=1}^{m_0}d_{dR}x_j\; d_{dR}\alpha_j, \]
which is the de Rham differential of the canonical ``Liouville'' form.
\begin{prop}\cite[Proposition 1.21]{ptvv}
    The $(-1)$-shifted closed 2-form \[\omega_L := \left(\omega^0_L, 0, \ldots, 0, \ldots\right)\] is shifted symplectic.
\end{prop}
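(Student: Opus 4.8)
The plan is to verify directly, in the given coordinates, the two conditions defining an $n$-shifted symplectic structure with $n=-1$: that $\omega_L$ is closed (i.e. a cocycle in the complex computing $\A^{2,cl}(T^*[-1]\mathbf{Y},-1)$) and that it is non-degenerate. This is of course a special case of the general fact \cite[Proposition 1.21]{ptvv} that any shifted cotangent stack carries a canonical shifted symplectic form, and one could simply invoke that; but since the form has been written out explicitly, I would give the short explicit check.

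\emph{Closedness.} For $(\omega_L^0,0,0,\ldots)$ to be a cocycle in the total complex, whose differential combines the internal differential $d$ of $C$ with the de Rham differential $d_{dR}$, it suffices that $d\omega_L^0=0$ and $d_{dR}\omega_L^0=0$. The second is immediate: $\omega_L^0 = d_{dR}\lambda$ for the Liouville one-form $\lambda = \sum_i y_i\, d_{dR}\beta_i - \sum_j x_j\, d_{dR}\alpha_j$, so $d_{dR}\omega_L^0 = d_{dR}^2\lambda = 0$, and this is also what licenses taking the higher components of the closed form to be zero. For the first, I would use that $d$ anticommutes with $d_{dR}$ on generators, $d(d_{dR}a) = -d_{dR}(da)$. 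Via $dy_i=g_i$ and $d\beta_i=0$, the summand $d_{dR}y_i\, d_{dR}\beta_i$ contributes a term proportional to $\sum_j \frac{\partial g_i}{\partial x_j}\, d_{dR}x_j\, d_{dR}\beta_i$; via $dx_j=0$ and $d\alpha_j = -\sum_i \frac{\partial g_i}{\partial x_j}\beta_i$, the summand $d_{dR}x_j\, d_{dR}\alpha_j$ contributes both a matching first-order term in $d_{dR}x_j\, d_{dR}\beta_i$ and a second-order term in $\beta_i\, \frac{\partial^2 g_i}{\partial x_j \partial x_k}\, d_{dR}x_j\, d_{dR}x_k$. The first-order contributions cancel precisely because of the relative minus sign in $\omega_L^0$ together with the minus sign in $d\alpha_j$, and the second-order term vanishes because the symmetric coefficient $\frac{\partial^2 g_i}{\partial x_j \partial x_k}$ is contracted against the antisymmetric $d_{dR}x_j\, d_{dR}x_k$. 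Hence $d\omega_L^0=0$.

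\emph{Non-degeneracy.} Since $C$ is quasi-free over $k$, $\bL_{T^*[-1]\mathbf{Y}}$ is the free $C$-module on $d_{dR}x_j, d_{dR}y_i, d_{dR}\beta_i, d_{dR}\alpha_j$ and $\bT_{T^*[-1]\mathbf{Y}}$ is its dual on $\partial_{x_j}, \partial_{y_i}, \partial_{\beta_i}, \partial_{\alpha_j}$. Contraction with $\omega_L^0$ sends, up to sign, $\partial_{y_i} \mapsto d_{dR}\beta_i$, $\partial_{\beta_i}\mapsto d_{dR}y_i$, $\partial_{x_j}\mapsto d_{dR}\alpha_j$ and $\partial_{\alpha_j}\mapsto d_{dR}x_j$; a degree count confirms this is a degree-preserving map $\bT_{T^*[-1]\mathbf{Y}} \to \bL_{T^*[-1]\mathbf{Y}}[-1]$, and it is visibly a bijection on free generators, hence an isomorphism of the underlying graded modules. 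Combined with closedness, which makes it a map of complexes, this yields the required quasi-isomorphism, so $\omega_L$ is non-degenerate and therefore symplectic.

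\emph{Main obstacle.} The only genuinely computational step is the internal closedness $d\omega_L^0=0$; everything else is formal, and in fact non-degeneracy holds at the level of graded modules rather than merely up to quasi-isomorphism. The thing to watch is the sign bookkeeping in the Koszul and $d\, d_{dR}=-d_{dR}\, d$ conventions, together with the cancellation of the two first-order contributions. I would note that the signs appearing in $\omega_L^0$ and in $d\alpha_j$ are exactly those forced by this cancellation, so if preferred they can be reverse-engineered from the single requirement that the de Rham differential of the Liouville form be $d$-closed.
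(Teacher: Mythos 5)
Your argument is correct, but it takes a different route from the paper: the paper offers no proof at all here, simply citing \cite[Proposition 1.21]{ptvv}, which is the general statement that the $n$-shifted cotangent stack of a derived Artin stack carries a canonical $n$-shifted symplectic structure induced by the de Rham differential of the tautological Liouville one-form. Your explicit coordinate check is a valid and self-contained substitute: the cancellation of the two first-order terms $\sum_{i,j}\frac{\partial g_i}{\partial x_j}\,d_{dR}x_j\,d_{dR}\beta_i$ (one from $d(d_{dR}y_i)=-d_{dR}g_i$, one from $d(d_{dR}\alpha_j)$ via the sign in $d\alpha_j=-\sum_i\frac{\partial g_i}{\partial x_j}\beta_i$) together with the symmetric-against-antisymmetric vanishing of the second-order term is exactly right, and since $C$ is semifree over $k$, non-degeneracy does reduce to the visible bijection on free generators of $\mb T$ and $\mb L[-1]$, with the degree count ($\partial_{y_i}$ in degree $1$ mapping to $d_{dR}\beta_i$ in degree $0$, etc.) confirming the shift. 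What the citation buys the paper is independence from the particular presentation and the guarantee that the resulting form is \emph{the canonical one} (which matters later when the canonical Lagrangian structures on the zero section and on $d_{dR}\Psi$ are invoked); what your computation buys is a concrete verification that the explicitly written $\omega_L^0$, with its specific signs, really is a shifted symplectic form for the specific differential on $C$. One cosmetic remark: your primitive $\lambda=\sum_i y_i\,d_{dR}\beta_i-\sum_j x_j\,d_{dR}\alpha_j$ is a legitimate $d_{dR}$-primitive of $\omega_L^0$ but is not the tautological Liouville form (which is, up to sign, $\sum_j\alpha_j\,d_{dR}x_j+\sum_i\beta_i\,d_{dR}y_i$); the two differ by a $d_{dR}$-exact term, so this does not affect your argument, but you should not call it ``the'' Liouville one-form if you want to match the canonical structure of \cite{ptvv} on the nose.
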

Thus $T^*[-1]{\bf Y}$ carries a canonical $(-1)$-shifted symplectic structure. Now consider
\[ \Psi := \sum_{i=1}^{m_1} f_iy_i \in B^{-1}. \]
Then $d_{dR}\Psi$ is a section of the $(-1)$-shifted cotangent bundle of $\bf Y$ given by the map
\begin{align*}
    d_{dR}\Psi: C &\to B,\\
    B(0) &\mapsto B(0),\\
    y_i &\mapsto y_i,\\
    \alpha_j &\mapsto \sum_{i=1}^{m_1} \frac{\partial f_i}{\partial x_j}y_i,\\
    \beta_i &\mapsto f_i.
\end{align*}
In particular, the morphism $d_{dR}\Psi: {\bf Y} \to T^*[-1]{\bf Y}$ carries a Lagrangian structure. The zero section $z: {\bf Y} \to T^*[-1]{\bf Y}$ naturally carries a Lagrangian structure. Thus by the theorem, the derived intersection
\[ \mb R Crit(\Psi) := {\bf Y} \times^h_{T^*[-1]{\bf Y}} {\bf Y}, \]
where the two morphisms are $d_{dR}\Psi$ and $z$, carries a canonical $(-2)$-shifted symplectic structure, $Res(\omega_L, d_{dR}\Psi, z)$, called the \textit{residue of} $\omega_L$. We will also denote it by $R(\omega_L)$ when there is no room for confusion.

\begin{prop}\label{prop:evencase}
    There is a map $\kappa: {\bf X} \to \mb R Crit(\Psi)$ which is an equivalence of derived schemes. Moreover, we have (up to a constant) $\kappa^*R(\omega_L) \sim \omega$.
\end{prop}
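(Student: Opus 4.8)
The plan is to compute $\mb R Crit(\Psi)$ as an explicit cdga and then recognise it as $A$. First I would build a cofibrant model for the homotopy fibre product $D := B\otimes^h_C B$, taken along the zero section $z^*\colon C\to B$ and the section $(d_{dR}\Psi)^*\colon C\to B$, by resolving the zero section. Since $z^*$ exhibits $B$ as $C/(\beta_1,\dots,\beta_{m_1},\alpha_1,\dots,\alpha_{m_0})$, I resolve by freely adjoining to $C$ odd generators $\gamma_1,\dots,\gamma_{m_1}$ in degree $-1$ with $d\gamma_i=\beta_i$, together with even generators $\delta_1,\dots,\delta_{m_0}$ in degree $-2$. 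The one subtlety here is that $\alpha_j$ is not a cocycle, so I cannot simply set $d\delta_j=\alpha_j$; using $d\alpha_j=-\sum_i(\partial g_i/\partial x_j)\beta_i$ and $d\gamma_i=\beta_i$, the correct choice is $d\delta_j=\alpha_j+\sum_i(\partial g_i/\partial x_j)\gamma_i$, for which $d^2\delta_j=0$. Tensoring this resolution with $B$ along $(d_{dR}\Psi)^*$ substitutes $\beta_i\mapsto f_i$ and $\alpha_j\mapsto\sum_i(\partial f_i/\partial x_j)y_i$, leaving $D=A(0)[y_i,\gamma_i,\delta_j]$ freely generated over $A(0)$ by $y_i,\gamma_i$ in degree $-1$ and $\delta_j$ in degree $-2$, with $dy_i=g_i$, $d\gamma_i=f_i$, and $d\delta_j=\sum_i(\partial f_i/\partial x_j)y_i+(\partial g_i/\partial x_j)\gamma_i$.

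Comparing with the Darboux presentation of $A$, I would then define $\kappa$ via the cdga map $D\to A$ that fixes $A(0)$ and the $y_i$ and sends $\gamma_i\mapsto z_i$, $\delta_j\mapsto w_j$. This is compatible with the differentials precisely because $dz_i=f_i$ and $dw_j=\sum_i(\partial f_i/\partial x_j)y_i+(\partial g_i/\partial x_j)z_i$ in $A$, so the map is an isomorphism of cdgas and hence $\kappa$ is an equivalence of derived schemes. (It is exactly the even count $2m_1$ of degree $-1$ generators that lets us split them into the half $y_i$ defining ${\bf Y}$ and the half $z_i$ recovered as the Koszul variables $\gamma_i$.)

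For the second statement I would unwind the PTVV residue attached to the pullback square. Writing $\lambda$ for the Liouville primitive of $\omega_L$ normalised so that $z^*\lambda=0$ and $(d_{dR}\Psi)^*\lambda=d_{dR}\Psi$, the diagonal map $g\colon \BSpec D\to T^*[-1]{\bf Y}$ satisfies $g^*\omega_L=d_{dR}(g^*\lambda)=0$, but only up to the homotopies recorded by $\gamma_i,\delta_j$. The isotropic structure on the zero section is then the constant path at $0$, while the one on $d_{dR}\Psi$ is the canonical homotopy furnished by the shifted function $\Psi$; pulling both back to $\BSpec D$ and concatenating them yields a loop at $0$ in $\A^{2,cl}(\BSpec D,-1)$, i.e. a class in $\A^{2,cl}(\BSpec D,-2)$, which is by definition $R(\omega_L)$. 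I would then check that its leading component equals $\sum_i d_{dR}y_i\,d_{dR}\gamma_i+\sum_j d_{dR}x_j\,d_{dR}\delta_j$ up to a scalar, so that $\kappa^*R(\omega_L)$ has leading component $\omega^0=\sum_i d_{dR}y_i\,d_{dR}z_i+\sum_j d_{dR}x_j\,d_{dR}w_j$.

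The routine resolution of the first step is essentially forced, so the main obstacle is the symplectic comparison. The difficulty is that the identification $g^*\omega_L=0$ holds only through the two isotropic structures, so I must track these homotopies through the explicit resolution and then (i) verify that the concatenated loop is closed for the total differential $d+d_{dR}$ on the complex of closed $2$-forms, (ii) compute its degree-$(-2)$ component and show the higher components $\omega^1,\omega^2,\dots$ vanish or are exact, and (iii) control all signs so that the result agrees with $\omega$ up to a nonzero constant. I expect steps (ii)--(iii) to carry essentially all of the genuine bookkeeping in the proof.
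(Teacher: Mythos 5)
Your construction of the equivalence $\kappa$ is exactly the paper's: the paper resolves the zero-section copy of $B$ by the semifree $C$-algebra $C[\theta_i,\tau_j]$ with $d\theta_i=\beta_i$ and $d\tau_j=\alpha_j+\sum_i(\partial g_i/\partial x_j)\theta_i$ (your $\gamma_i,\delta_j$), tensors along $d_{dR}\Psi$, and reads off the isomorphism onto $A$ via $\theta_i\mapsto z_i$, $\tau_j\mapsto w_j$. That half of your argument is complete and correct, including the observation that $\alpha_j$ fails to be a cocycle and the resulting correction term in $d\delta_j$.

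For the symplectic comparison your strategy is right but you have deferred essentially all of the content, and one point in your outline needs more care than you give it. You say the isotropic structure on the zero section is ``the constant path at $0$'': that is true on the strict model $B$ (where $z^*\omega_L=0$ on the nose), but on the cofibrant model $D$ the map $C\to D$ is an inclusion and $z_*\omega_L^0=\sum_i d_{dR}y_i\,d_{dR}\beta_i-\sum_j d_{dR}x_j\,d_{dR}\alpha_j$ is a nonzero $2$-form; the constant path does not transport to the constant path, but to a genuine primitive, namely $\nu^0=-\sum_j d_{dR}x_j\,d_{dR}\tau_j-\sum_i d_{dR}y_i\,d_{dR}\theta_i$ in the paper's notation. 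The paper does the work you defer in two explicit steps: it writes down isotropic structures $\nu$ (on $D$) and $\mu$ (on a second resolution $D'$ adapted to $d_{dR}\Psi$, with $d\theta_i'=f_i-\beta_i$), computes $R(\omega_L)=\kappa'_*\mu-\kappa_*\nu=2\omega$ directly (all higher components vanish because everything is strict), and then verifies both that $\Theta_\nu$, $\Theta_\mu$ are quasi-isomorphisms and that $\nu,\mu$ agree with the canonical Lagrangian structures, the latter via the superpotential description of \cite[Example 3.6]{joycesafronov} --- which is precisely where the evenness of the number of degree $(-1)$ generators is used a second time, to rewrite the quadratic form as a nondegenerate pairing. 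Your parenthetical locates the evenness only in the splitting of generators for the first half; without the identification of your transported homotopies with the canonical structures (or, equivalently, without the Joyce--Safronov comparison), the form you compute is not yet known to be the residue of Theorem \ref{thm:lagint}, so this step cannot be left as bookkeeping.
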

\begin{proof}
    We first compute $\mb R Crit(\Psi) = \BSpec \left(B \otimes^\mb L_C B\right)$. For this we need a cofibrant replacement of (at least) one of the $B$'s as a $C$-cdga. Consider the semifree $C$-cdga $D := C[\theta_i, \tau_j]$, where $\theta_1, \ldots, \theta_{m_1}$ are in degree $(-1)$ and $\tau_1, \ldots, \tau_{m_0}$ are in degree $(-2)$. Set the differentials $d\theta_i = \beta_i$ and $d\tau_j = \alpha_j + \sum_{i=1}^{m_1} \frac{\partial g_i}{\partial x_j} \theta_i$. Recall that we denote the zero section ${\bf Y}\rightarrow T^*[-1]{\bf Y}$ by $ z$. Then the map $z: C \to B$ induces a map $D\to B$ when we map the $\theta_i$'s and $\tau_j$'s to zero. This map is a quasi-isomorphism. Thus $D$ is a cofibrant replacement for $B$ as a $C$-cdga (with the $C$-action on $B$ given by $z$) and we have $\mb R Crit (\Psi) \simeq \BSpec \left( B \otimes_C D \right)$. Since $D$ is freely generated over $C$, it follows that $B\otimes_C D$ is freely generated over $B$, with generators $\theta_1, \ldots, \theta_{m_1}$ in degree $(-1)$ and $\tau_1,\ldots, \tau_{m_0}$ in degree $(-2)$, and differentials given by $d\theta_i = f_i$ and $d\tau_j = \sum_{i=1}^{m_1} \frac{\partial f_i}{\partial x_j}y_i + \sum_{i=1}^{m_1} \frac{\partial g_i}{\partial x_j} \theta_i$.

    Now consider the map
    \begin{align*}
        \kappa: B\otimes_C D &\to A\\
        B(0) &\mapsto B(0)\\
        y_i &\mapsto y_i\\
        \theta_i &\mapsto z_i\\
        \tau_j &\mapsto w_j
    \end{align*}
    It can be checked that $\kappa$ is a quasi-isomorphism (in fact, an isomorphism) of cdgas. Denoting the induced map ${\bf X} = \BSpec A \to \mb R Crit(\Psi)$ also by $\kappa$, we obtain the required map.

    It remains to show the assertion about the shifted symplectic structures. Denoting the map $C \to D$ induced by $z: C \to B$ also by $z$, we see that
    \[ z_*\omega_L^0 = \sum_{i=1}^{m_1}d_{dR}y_i\; d_{dR}\beta_i - \sum_{j=1}^{m_0}d_{dR}x_j\; d_{dR}\alpha_j. \]
    Now consider the form $\nu = \left(\nu^0, 0, \ldots, 0\right)$ on $D$, where
    \[ \nu^0 = -\sum_j d_{dR}x_j d_{dR}\tau_j - \sum_i d_{dR} y_i d_{dR} \theta_i. \]
    Then $d\nu^0 =z_*\omega^0_L$ and $d_{dR}\nu^0 = 0$, so that $\nu$ is an isotropic structure for $\omega_L$ on the map $z$.

    To understand the Lagrangian structure on the other morphism better, we take another semi-free resolution $D'$ of $B$, where $D' := C[\theta'_i, \tau'_j]$, where $\theta'_1, \ldots, \theta'_{m_1}$ are in degree $(-1)$ and $\tau'_1, \ldots, \tau'_{m_0}$ are in degree $(-2)$. Set the differentials $d\theta'_i = f_i - \beta_i$ and $d\tau'_j = \sum_{i=1}^{m_1} \frac{\partial f_i}{\partial x_j}y_i + \sum_{i=1}^{m_1} \frac{\partial g_i}{\partial x_j} \theta_i' - \alpha_j$. The induced map $D' \to B$ is a quasi-isomorphism. Now
    \[ \left(d_{dR}\Psi\right)_*\omega_L^0 = \sum_{i=1}^{m_1}d_{dR}y_i\; d_{dR}\beta_i - \sum_{j=1}^{m_0}d_{dR}x_j\; d_{dR}\alpha_j \]
    on $D'$.
    Consider the form $\mu = \left(\mu^0, 0, \ldots, 0\right)$ on $D'$, where
    \[ \mu^0 = \sum_j d_{dR}x_j d_{dR}\tau'_j + \sum_i d_{dR} y_i d_{dR} \theta'_i. \]
    Then $d\mu^0 = \left(d_{dR}\Psi\right)_*\omega^0_L$ and $d_{dR}\mu^0 = 0$, so that $\mu$ is an isotropic structure for $\omega_L$ on the map $d_{dR}\Psi$. Denoting the map $D'\otimes_C B \to A$ by $\kappa'$, we have that $R(\omega_L) = \kappa'_* \mu - \kappa_* \nu = 2\omega$, so the result will follow once we show $\nu$ and $\mu$ are Lagrangian structures and are equivalent to the canonical Lagrangian structures on their respective morphisms.

    For the map $z: C \to D$, we have $\mb T_z \simeq B\langle \frac{\partial}{\partial \theta_i}, \frac{\partial}{\partial \tau_j}\rangle$ as graded $B$-modules, while $\mb L_{\bf Y}[-2] \simeq \mb L_B[-2] \simeq B\langle d_{dR} x_j, d_{dR} y_i\rangle$ as graded $B$-modules. Then we see that the map induced by $\nu$ is
        \begin{align*}
        \Theta_\nu : \mb T_z &\to \mb L_B [-2]\\
        \frac{\partial}{\partial \tau_j} &\mapsto d_{dR} x_j\\
        \frac{\partial}{\partial \theta_i} &\mapsto d_{dR} y_i
    \end{align*}
    which is a quasi-isomorphism. 
    
    To see that this is equivalent to the canonical structure on the morphism $z$, we construct another cdga $\tilde{B}$ which is quasi-isomorphic to $B$. Let $\tilde{B}(0) := C(0) = B(0)[\beta_j]$, and let the generators of $\tilde{B}$ in degree $-1$ be $y_1, \ldots, y_{m_1}, \theta_1, \ldots, \theta_{m_1}$, with $dy_i = g_i$ and $d\theta_i = \beta_i$. Then $\tilde{B}$ is quasi-isomorphic to $B$ and the induced map $z: C \to \tilde{B}$ is the identity on $x_j, \beta_i, y_i$, while we have $\alpha_j \mapsto \sum_i \frac{\partial g_i}{\partial x_j} \theta_i$. We are now in the situation of \cite[Example 3.6]{joycesafronov}, with the superpotential $\varepsilon \in \tilde{B}^{-1}$ given by
    \[ \varepsilon = - \sum_i \left(g_i \theta_i + y_i \beta_i\right). \]
    Note that we have rewritten the ``quadratic form'' in degree $-1$ here as a nondegenerate pairing since we have an even number of generators in degree $-1$. The Lagrangian structure on $z$ is equivalent to $-\sum_i d_{dR} y_i d_{dR} \theta_i$, which is equivalent to the restriction of $\nu^0$ to $\tilde{B}$. We conclude that $\nu$ is equivalent to the canonical Lagrangian structure on the section $z$.
    
    A similar calculation shows that $\mu$ is a Lagrangian structure, and is equivalent to the canonical Lagrangian structure on the section $d_{dR}\Psi$, and we're done.
\end{proof}
This completes the proof in the even case. Observe that we have proved something stronger than \Cref{thm:generalmain} for this case: the local description is not just as a Lagrangian intersection, but as a \textit{derived critical locus}, which is a special type of Lagrangian intersection.

\subsection{The general case}
We now proceed to prove the general case. As will be clear in the proof, this is not a generalization of the proof in the even case, and is in fact a completely independent proof even for the situation where we have an even number of generators in degree $(-1)$.

Let $A$ be a cdga in the strong Darboux form in \cite{bbj} with a $(-2)$-shifted symplectic structure. In particular, $A$ is of the form in Example 5.10 in \cite{bbj}. We first recall the setup. Let $m_0$, $A(0)$, be as in Section 2.1. We denote the degree $-1$ generators by $y_1, ...y_{m_1}$, and the degree $-2$ generators by $z_1,..., z_{m_0}$.
Recall the classical master equation is given by $\frac{1}{4}\sum_{j=1}^{m_1}(\frac{\partial\Phi}{\partial y_j})^2=0$.  Choose a Hamiltonian $\Phi\in A^{-1}$ that satisfies the classical master equation, we write
\[
\Phi=\sum_{j=1}^{m_1}f_jy_j,
\]
where $f_i\in A(0)$ and $\sum_i f_i^2 = 0$.
Let $h_j:=\frac{1}{2}f_j$, and $g_k^j:=\frac{\partial f_j}{\partial x_k}$. The differentials on $A$ are given by 
\begin{equation*}
\begin{split}
dy_j&=h_j,\\
dz_k&=\sum_jg_k^jy_j.
\end{split}
\end{equation*}
For the cdga $A$, let $A(1)$ be the cdga freely generated over $A(0)$ by the degree $-1$ generators, with the same differential. 
Consider the following fiber product diagram 
\begin{equation}\label{eq:lagint}
	\begin{tikzcd}
 \BSpec A(0)\times^h_{T^*[-1]\BSpec A(0)}\BSpec A(1) &\BSpec A(0)\\
 \BSpec A(1) & T^*[-1]\BSpec A(0)
        \arrow[from=1-1, to=1-2]
	\arrow[from=1-1, to=2-1]
	\arrow["p", from=1-2, to=2-2]
	\arrow["q", from=2-1, to=2-2]
		\end{tikzcd}
\end{equation}
We denote $T^*[-1]\BSpec A(0)$ by $\BSpec C_0$. Then 
$C_0 \simeq A(0)[\alpha_k]$,  
where $\alpha_k=(d_{dR}x_k)^\vee[1]$ are generators of degree $-1$. 
The map $p$ is the zero section map, and the map $q$ is induced by the map of cdgas $C_0\rightarrow A(1)$:
\[
\begin{split}
A(0)&\mapsto A(0),\\
\alpha_k&\mapsto \sum_jg_k^j y_j
\end{split}
\]
Note that this is indeed a map of cdgas by applying $\frac{\partial}{\partial x_k}$ to the classical master equation.
\begin{prop}
There exists a map $\kappa:\BSpec A\rightarrow \BSpec A(0)\times^h_{T^*[-1]\BSpec A(0)}\BSpec A(1)$ which is an isomorphism. 
\end{prop}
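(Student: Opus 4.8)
The plan is to compute the homotopy fiber product in \Cref{eq:lagint} explicitly by choosing a convenient cofibrant model, and then to exhibit the map $\kappa$ directly at the level of cdgas and check it is an isomorphism. Since that fiber product corresponds to the derived tensor product $A(0) \otimes^{\mb L}_{C_0} A(1)$, the first task is to replace one of the two factors by a semifree $C_0$-algebra. I would resolve the zero-section factor $A(0)$, because the map $p: C_0 \to A(0)$ simply kills the generators $\alpha_k$. As $A(0)$ is a smooth classical algebra, $C_0 \simeq A(0)[\alpha_k]$ has no generators over $A(0)$ beyond the $\alpha_k$ in degree $-1$, so it suffices to adjoin one degree $(-2)$ generator $\zeta_k$ for each $k$, with $d\zeta_k = \alpha_k$.

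Next I would verify that the resulting semifree cdga $D := C_0[\zeta_k]$ maps quasi-isomorphically to $A(0)$ via $\alpha_k, \zeta_k \mapsto 0$. This is the standard Koszul contractibility of each pair $(\alpha_k, \zeta_k)$, valid in characteristic zero, and it exhibits $D$ as a cofibrant replacement for $A(0)$ as a $C_0$-algebra. With this model the derived tensor product becomes the strict one, $A(1) \otimes_{C_0} D = A(1)[\zeta_k]$, freely generated over $A(0)$ by the $y_j$ in degree $-1$ and the $\zeta_k$ in degree $-2$. The only point needing care is the differential on $\zeta_k$ after base change: since $A(1)$ is a $C_0$-algebra via $q$, which sends $\alpha_k \mapsto \sum_j g_k^j y_j$, the relation $d\zeta_k = \alpha_k$ becomes $d\zeta_k = \sum_j g_k^j y_j$.

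Comparing with $A$, whose degree $-2$ differential is $dz_k = \sum_j g_k^j y_j$, and noting that the degree $-1$ part $dy_j = h_j$ agrees with $A(1)$ by construction, I see that $A(1)[\zeta_k]$ and $A$ have identical generators and differentials. I would then define $\kappa$ to be the morphism of derived schemes induced by the cdga homomorphism $A(1)[\zeta_k] \to A$ that is the identity on $A(1)$ and sends $\zeta_k \mapsto z_k$. By the matching above this is a degreewise bijection commuting with differentials, hence an isomorphism of cdgas, so $\kappa$ is an isomorphism.

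I do not expect any single computation to be the obstacle; the real work is the bookkeeping. The three places to be careful are: confirming that $D$ is genuinely semifree (hence cofibrant) and acyclic over $C_0$; confirming that $q$ really is a cdga map, which is exactly where the classical master equation $\sum_j f_j^2 = 0$ enters, differentiated by $\partial/\partial x_k$; and confirming that the base-changed differential on $\zeta_k$ lands precisely on $\sum_j g_k^j y_j$, with no stray sign from the shifts defining $\alpha_k = (d_{dR}x_k)^\vee[1]$ or factor discrepancy against $h_j = \tfrac12 f_j$. Once these are pinned down, the identification of $A$ with $A(1)[\zeta_k]$ is immediate.
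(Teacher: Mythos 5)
Your proposal is correct and follows essentially the same route as the paper: the paper also resolves the zero-section factor $A(0)$ as a $C_0$-algebra by adjoining degree $(-2)$ generators $\tau_k$ with $d\tau_k = \alpha_k$ (your $\zeta_k$), tensors with $A(1)$ over $C_0$, and identifies the result with $A$ via $\tau_k \mapsto z_k$. The points you flag for care (cofibrancy/acyclicity of the resolution, the classical master equation making $q$ a cdga map, and the base-changed differential $d\zeta_k = \sum_j g_k^j y_j$) are exactly the checks the paper performs.
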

\begin{proof}
For the first claim, it is enough to show that 
\begin{equation}
\label{Equ:Lagintcdga}
\begin{tikzcd}
	{C_0} & {A(1)} \\
	{A(0)} & A
	\arrow["q'", from=1-1, to=1-2]
	\arrow["p'", from=1-1, to=2-1]
	\arrow[from=1-2, to=2-2]
	\arrow[from=2-1, to=2-2]
\end{tikzcd}
\end{equation}
is a homotopy pushout diagram. For this we construct a cofibrant replacement for $A(0)$ as a $C_0$-algebra. Recall that $C_0 = A(0)[\alpha_k]$. Consider the semi-free dg $C_0$-module $M := C_0 \oplus C_0\langle \tau_k \rangle$, 
with $\tau_k$ in degree $-2$ and $d\tau_k = \alpha_k$. Then, as a chain complex, we have
\begin{equation}
\label{Equ:dgresA0}
M =...\rightarrow A(0)\la\tau_k, \alpha_{k_1}\alpha_{k_2}\ra\rightarrow A(0)\la \alpha_k \ra\rightarrow A(0)
\end{equation}
where $\alpha_k\mapsto 0$ and $\tau_k\mapsto \alpha_k$.
By construction $M$ is quasi-isomorphic to $A(0)$. Then 
\[
A(0)\otimes^\bL_{C_0} A(1)\simeq M\otimes_{C_0} A(1)
\]
which is quasi-isomorphic to $A(1)\oplus A(1)\langle \tau_k\rangle$. As a chain complex $M\otimes_{C_0}A(1)$ is given by
\[
...\rightarrow A(0)\la\tau_k, y_{j_1}y_{j_2}\ra\rightarrow A(0)\la y_j\ra\rightarrow A(0)
\]
where $y_j\mapsto h_j$,
and $\tau_k\mapsto \sum_j g_k^jy_j$.
Then the map 
$A\rightarrow M\otimes_{C_0}A(1)$ where $z_k\mapsto \tau_k$ is an isomorphism of cdgas.
\end{proof}

Consider the $(-1)$-shifted symplectic form on $T^*[-1]\BSpec A(0)$: $\omega_L=(\omega_L^0,...)$, where 
\[
\omega_L^0=\sum_id_{dR}x_id_{dR}\alpha_i.
\]
We explore the Lagrangian structure on $p$ and $q$ (relative to $\omega_L$). We begin with the Lagrangian structure on $q$. Consider the following $2$-form of degree $-2$:
\[
\nu=-\sum_j\dd y_j\dd y_j\in \left(\bigwedge^{2}\Omega_{A(1)}^1\right)^{-2}.
\]
Then we have 
\[
\label{Equ:dnu}
d\nu
=\sum_j\sum_k 2\frac{\partial h_j}{\partial x_k}\dd x_k\dd y_j.
\]
On the other hand, we have
\[
\begin{split}
q^*\omega_L&=q^*\sum_k\dd x_k\dd \alpha_k\\
&=\sum_k\sum_jg_k^j\dd x_k\dd y_j
\end{split}
\]
Hence $d\nu=q^*\omega_L$. Then by \cite[(2.4)]{joycesafronov}, 
we have $\nu\in \text{Isot}(q, \omega_L)$. To show $\nu$ is a Lagrangian structure on $q$ relative to $\omega_L$, we need to show $\Theta_\nu:\bT_q\rightarrow \bL_{\BSpec A(1)}[-2]$ is a quasi-isomorphism. 

The tangent complex $\mb T_{\BSpec A(1)}$ is, as an $A(1)$-module, given by $\mb T_{\BSpec A(1)} = A(1)\langle \frac{\partial}{\partial x_i}, \frac{\partial}{\partial y_j} \rangle$, with $\frac{\partial}{\partial x_i}$ in degree $0$ and $\frac{\partial}{\partial y_j}$ in degree $1$. While $\mb T_{\BSpec C_0} = C_0\langle \frac{\partial}{\partial x_i}, \frac{\partial}{\partial \alpha_j} \rangle$, with $\frac{\partial}{\partial x_i}$ in degree $0$ and $\frac{\partial}{\partial \alpha_j}$ in degree $1$.
The map $q: \BSpec A(1) \to \BSpec C_0$ induces the exact triangle in $L_{qcoh}(\BSpec A(1))$ :\[
\bT_q\xrightarrow{a} \bT_{\BSpec A(1)}\xrightarrow{b} q^*\bT_{\BSpec C_0}\xrightarrow{+1}
\]
where $b$ is defined by
\[
\begin{split}
\frac{\partial}{\partial x_i}&\mapsto q^*\frac{\partial}{\partial x_i}+\sum_k\sum_j\frac{\partial g_k^j}{\partial x_i}y_j q^*\frac{\partial}{\partial \alpha_k}, \\
\frac{\partial}{\partial y_j}&\mapsto \sum_k g_k^jq^*\frac{\partial}{\partial\alpha_k}.
\end{split}
\]
Then $\bT_q \simeq \text{Cone}(b)[-1]$. We write the degree $0$, $1$ and $2$ terms of $\bT_q$ in the following explicit form:
\begin{equation}
\label{Equ:Tq}
\begin{array}{c}
A(0)\la\frac{\partial}{\partial x_i}\ra\\
\oplus\\
A(0)\la y_j\frac{\partial}{\partial y_k}\ra\\
\oplus\\
A(0)\la y_m y_nq^*\frac{\partial}{\partial\alpha_l}\ra\\
\oplus\\
A(0)\la y_s q^*\frac{\partial}{\partial x_i}\ra
\end{array}
\xrightarrow{
\begin{bmatrix}
\frac{\partial h_k}{\partial x_i} & h_j &0 &0\\
-\frac{\partial g_l^j}{\partial x_i} & g_l^k& h_m\delta_{jn} - h_n\delta_{jm} & 0\\
-id & 0 & 0 & h_s
\end{bmatrix}
}
\begin{array}{c}
A(0)\la\frac{\partial}{\partial y_k}\ra\\
\oplus\\
A(0)\la y_jq^*\frac{\partial}{\partial \alpha_l}\ra\\
\oplus\\
A(0)\la q^*\frac{\partial}{\partial x_i}\ra
\end{array}
\xrightarrow{\begin{bmatrix}
-g_l^k & h_j & 0
\end{bmatrix}}
\begin{array}{c}
A(0)\la q^*\frac{\partial}{\partial\alpha_l}\ra
\end{array}
\end{equation}

The general terms of degree $-\lambda$ and degree $-\lambda+1$ and the maps between them are defined by linearity over $A(1)$.

Note that $\bL_{\BSpec A(1)}$ is a free $A(1)$-module generated by $\dd y_i$'s in degree $-1$ and $\dd x_i$'s in degree $0$. Then the homotopy between $q^*\omega_L:\bT_q\rightarrow \bL_{\BSpec A(1)}[-1]$ and $0$ induced by $\nu$ is given by 

\[
\frac{\partial}{\partial y_j}\mapsto \iota_{\frac{\partial}{\partial y_j}}\nu=-2\dd y_j[-1]\\
\]
where $\iota_{\frac{\partial}{\partial y_j}}$ is contraction with $\frac{\partial}{\partial y_j}$, and the rest of the maps are all zero. 

On the other hand, there is a canonical homotopy from $\bT_q\rightarrow q^*\bT_{\BSpec C_0}$ to $0$. 
We denote the homotopy from $q^*\omega_L$ to $0$ induced by this canonical homotopy by $c$. Then $c$ is given by 

\begin{equation}
\label{Equ:c}
\begin{split}
q^*\frac{\partial}{\partial \alpha_k}[-1]&\mapsto -\dd x_k[-1],\\
q^*\frac{\partial}{\partial x_i}[-1]&\mapsto \sum_jg_i^j\dd y_j[-1]-\sum_j\sum_k \frac{\partial g_i^j}{\partial x_k}y_j\dd x_k[-1],
\end{split}
\end{equation}
and all the other maps are zero.

Combining these two homotopies, we defined $\Theta_\nu=c-\nu$.

\begin{prop}
The isotropic structure $\nu$ is a Langrangian structure on $q$ relative to $\omega_L$.
 
\end{prop}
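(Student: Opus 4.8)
The statement to prove is that the induced map $\Theta_\nu : \bT_q \to \bL_{\BSpec A(1)}[-2]$ is a quasi-isomorphism, isotropy having already been verified through $d\nu = q^*\omega_L$. The plan is first to assemble the two homotopies $c$ and $\nu$ into a single explicit $A(1)$-linear chain map, and then to prove it is a quasi-isomorphism by a deformation-retraction that cancels the redundant cotangent directions appearing in \eqref{Equ:Tq}.

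First I would write $\Theta_\nu = c-\nu$ out on the four families of generators of $\bT_q$ recorded in \eqref{Equ:Tq}. On the generators inherited from $\bT_{\BSpec A(1)}$ the contraction against $\nu$ gives $\frac{\partial}{\partial y_j} \mapsto 2\,\dd y_j$ and $\frac{\partial}{\partial x_i}\mapsto 0$, while on the generators inherited from $q^*\bT_{\BSpec C_0}$ the canonical homotopy $c$ of \eqref{Equ:c} gives $q^*\frac{\partial}{\partial \alpha_k}\mapsto -\dd x_k$ and $q^*\frac{\partial}{\partial x_i}\mapsto \sum_j g_i^j\,\dd y_j - \sum_{j,k}\frac{\partial g_i^j}{\partial x_k}\,y_j\,\dd x_k$ (dropping the shift bookkeeping $[-1]$). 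I would then check that these assignments, extended $A(1)$-linearly, intertwine the differential of \eqref{Equ:Tq} with that of $\bL_{\BSpec A(1)}[-2]$. The one genuinely informative identity is the comparison on $\frac{\partial}{\partial y_k}$: the relation $\Theta_\nu\bigl(d\tfrac{\partial}{\partial y_k}\bigr) = \sum_l g_l^k\,\dd x_l$ must match $d\,\Theta_\nu\bigl(\tfrac{\partial}{\partial y_k}\bigr) = 2\sum_i \frac{\partial h_k}{\partial x_i}\,\dd x_i$, and this holds exactly because $h_k = \tfrac12 f_k$ and $g_i^k = \partial f_k/\partial x_i$; the remaining compatibilities reduce to derivatives of the classical master equation.

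For the quasi-isomorphism I would exploit the identity block of $b$. The component of $b:\bT_{\BSpec A(1)}\to q^*\bT_{\BSpec C_0}$ carrying $\frac{\partial}{\partial x_i}$ to $q^*\frac{\partial}{\partial x_i}$ is $-\mathrm{id}$ (the $-\mathrm{id}$ entry of \eqref{Equ:Tq}), hence invertible, so I can cancel the pair $\bigl(\frac{\partial}{\partial x_i},\,q^*\frac{\partial}{\partial x_i}\bigr)$ by a deformation retraction. Thus $\bT_q = \mathrm{fib}(b)$ is equivalent to the fibre of the map induced on the complementary directions, namely the complex freely generated over $A(1)$ by $\frac{\partial}{\partial y_j}$ in degree $1$ and $q^*\frac{\partial}{\partial \alpha_k}$ in degree $2$. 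Being a chain map with acyclic cancelled pair, $\Theta_\nu$ descends to this reduced complex as $\frac{\partial}{\partial y_j}\mapsto 2\,\dd y_j$ and $q^*\frac{\partial}{\partial \alpha_k}\mapsto -\dd x_k$. Since we are in characteristic zero the factor $2$ is invertible, so this is an isomorphism onto the free generators $\dd y_j,\dd x_k$ of $\bL_{\BSpec A(1)}[-2]$, compatible with differentials by the identity checked above; therefore $\Theta_\nu$ is a quasi-isomorphism and $\nu$ is a Lagrangian structure on $q$.

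The hard part will be the bookkeeping in the retraction: cancelling $\bigl(\frac{\partial}{\partial x_i},\,q^*\frac{\partial}{\partial x_i}\bigr)$ produces correction terms to the differentials on the surviving generators, coming from the off-diagonal entries $\partial g_l^j/\partial x_i$ and from the higher $y$-monomial generators $y_m y_n\,q^*\frac{\partial}{\partial \alpha_l}$ and $y_s\,q^*\frac{\partial}{\partial x_i}$ in \eqref{Equ:Tq}, and I must verify these do not disturb the comparison with $\bL_{\BSpec A(1)}[-2]$ nor the descent of $\Theta_\nu$ (in particular that the nonzero value of $\Theta_\nu$ on $q^*\frac{\partial}{\partial x_i}$ becomes a coboundary after retraction, as forced by $\Theta_\nu\bigl(\frac{\partial}{\partial x_i}\bigr)=0$). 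A secondary, more conceptual point to pin down is that the explicit combination $c-\nu$ genuinely realises the map $\Theta_\nu$ attached to the isotropic structure, i.e. that the canonical nullhomotopy producing $c$ and the contraction producing $\nu$ glue with the correct signs and shifts.
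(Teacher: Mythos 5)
Your proposal is correct and is essentially the paper's own argument in different clothing: the paper's involution $\phi:\bT_q\to\bT_q'$ onto the complex with simplified differential, followed by the projection $\psi$, is exactly the change of basis implementing the deformation retraction you describe (cancelling the $\pm\mathrm{id}$ block pairing $\frac{\partial}{\partial x_i}$ with $q^*\frac{\partial}{\partial x_i}$ and their $y$-multiples), and both arguments hinge on the same two points you identify --- the identity $\sum_k h_k g_l^k=0$ obtained from $d\circ d=0$ (equivalently, differentiating the classical master equation) and the invertibility of $2$ in the reduced two-term comparison $\frac{\partial}{\partial y_j}\mapsto 2\,\dd y_j$, $q^*\frac{\partial}{\partial\alpha_k}\mapsto-\dd x_k$.
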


\begin{proof}
We only need to show that the map $\Theta_\nu:\bT_q\rightarrow \bL_{\BSpec A(1)}[-2]$ is a quasi-isomorphism.

We define a chain complex $\bT_q'$. The terms in each degree is the same as in the complex $\bT_q$, and the differentials between the first three terms are given by

\begin{equation}
\label{Equ:Tq0}
\begin{array}{c}
A(0)\la\frac{\partial}{\partial x_i}\ra\\
\oplus\\
A(0)\la y_j\frac{\partial}{\partial y_k}\ra\\
\oplus\\
A(0)\la y_my_nq^*\frac{\partial}{\partial\alpha_l}\ra\\
\oplus\\
A(0)\la y_s q^*\frac{\partial}{\partial x_i}\ra
\end{array}
\xrightarrow{
\begin{bmatrix}
0 & h_j &0 &0\\
0 & g_l^k& h_m\delta_{jn}-h_n\delta_{jm} & 0\\
id & 0 & 0 & h_s
\end{bmatrix}
}
\begin{array}{c}
A(0)\la\frac{\partial}{\partial y_k}\ra\\
\oplus\\
A(0)\la y_jq^*\frac{\partial}{\partial \alpha_l}\ra\\
\oplus\\
A(0)\la q^*\frac{\partial}{\partial x_i}\ra
\end{array}
\xrightarrow{\begin{bmatrix}
-g_l^k & h_j & 0
\end{bmatrix}}
\begin{array}{c}
A(0)\la q^*\frac{\partial}{\partial\alpha_l}\ra
\end{array}
\end{equation}

We form the following diagram
\begin{equation}
	\begin{tikzcd}
 	\bT_q^{-\lambda}\arrow[r, "d_q^{-\lambda}"] \arrow[d, "\phi^{-\lambda}"]&\bT_q^{-\lambda+1}\arrow[d, "\phi^{-\lambda+1}"]\\
    \bT_q'^{-\lambda}\arrow[r, "d_q'^{-\lambda}"] &\bT_q'^{-\lambda+1},
		\end{tikzcd}
\end{equation}
where $\phi:=(..., \phi^0, \phi^1, \phi^2)$ is an $A(1)$ linear morphism defined by 
\[
\begin{split}
\phi^2 &=id,\\
\phi^1 &=\begin{bmatrix}
id & 0 & \frac{\partial h_k}{\partial x_i}\\
0 & id & -\frac{\partial g_l^j}{\partial x_i}\\
0 & 0 & -id
\end{bmatrix},\\
\phi^{-\lambda} &=\begin{bmatrix}
id & 0 & 0 & 0\\
0 & id & 0 &  \frac{\partial h_k}{\partial x_i}\\
0 & 0 & id & -\frac{\partial g_l^j}{\partial x_i}\\
0 & 0 & 0 & -id
\end{bmatrix}
\end{split}
\]

for $\lambda\geq 0$.
Note that this is indeed a commutative diagram. The only nontrivial term is the image of $y_{I_{\lambda+1}}q^*\frac{\partial}{\partial x_i}$.
Since $\phi^{-\lambda}$ is $A(1)$ linear for any $\lambda$, we only need to check 
\begin{equation}
\label{Equ:commdiag}
\phi^{\lambda+1}\circ d_q\left(q^*\frac{\partial}{\partial x_i}\right)=d_q'\circ\phi^\lambda\left(q^*\frac{\partial}{\partial x_i}\right).
\end{equation}
The left hand side is given by
\[
\label{comm1sim}
\begin{split}
\phi^{\lambda+1}\circ d_q\left(q^*\frac{\partial}{\partial x_i}\right)=\phi^{\lambda+1}(0)=0.
\end{split}
\]
The right hand side is given by
\[
\label{comm2sim}
\begin{split}
d'_q\circ\phi^\lambda\left(q^*\frac{\partial}{\partial x_i}\right)
&=d'_q\left(
\sum_k\frac{\partial h_k}{\partial x_i}\frac{\partial}{\partial y_k}
-\sum_{j}\sum_l\frac{\partial g_l^j}{\partial x_i}y_jq^*\frac{\partial}{\partial \alpha_l}
-q^*\frac{\partial}{\partial x_i}\right)
\\
&=-\sum_k\sum_l\frac{\partial h_k}{\partial x_i}g_l^kq^*\frac{\partial}{\partial \alpha_l}-\sum_{j}\sum_l\frac{\partial g_l^j}{\partial x_i}h_jq^*\frac{\partial}{\partial \alpha_l}\\
&=0.
\end{split}
\]

The last equality is obtained by applying $\frac{\partial}{\partial x_i}$ to the equation $\sum_k h_kg_l^k=0$, where the latter equation comes from $d\circ d=0$ in the cdga $A$.
Hence Equation \ref{Equ:commdiag} is satisfied.

Similarly, we also have a commutative diagram 
\begin{equation}
	\begin{tikzcd}
 	\bT_q'^{-\lambda}\arrow[r, "d_q'^{-\lambda}"] \arrow[d, "\phi^{-\lambda}"]&\bT_q'^{-\lambda+1}\arrow[d, "\phi^{-\lambda+1}"]\\
    \bT_q^{-\lambda}\arrow[r, "d_q^{-\lambda}"] &\bT_q^{-\lambda+1}
		\end{tikzcd}.
\end{equation}
It is easy to see that $\phi^i\circ \phi^i=Id$ for all $i\leq 2$,
hence $\phi$ induces a quasi-isomorphism between equation \ref{Equ:Tq} and \ref{Equ:Tq0}.

Recall that 
$\bL_{\BSpec A(1)}[-2]$ is generated by generators $d_{dR}x_i[-2]$'s in degree $2$ and generators $d_{dR}y_j[-2]$'s in degree $1$. We define an $A(1)$ linear morphism between \ref{Equ:Tq0} and $\bL_{\BSpec A(1)}[-2]$ by $\psi:=(...,\psi_0, \psi_1,\psi_2)$, where 
\[
\begin{split}
&\psi_1: \frac{\partial}{\partial y_j}\rightarrow \iota_{\frac{\partial}{\partial y_j}}\nu=2d_{dR}y_j[-2],\\ 
&\psi_2: q^*\frac{\partial}{\partial\alpha_i}[-1]\rightarrow -d_{dR}x_i[-2].
\end{split}
\]
Then $\psi$ is a quasi-isomorphism. Composing $\phi$ and $\psi$, we have $\Theta_\nu=\psi\circ\phi$ defines a quasi-isomorphism between \ref{Equ:Tq} and $\bL_{\BSpec A(1)}[-2]$. 
\end{proof}

It is relative straightforward to see that $p$ is equipped with a Lagrangian structure. We still write down this Lagrangian structure explicitly since it will also be useful when comparing the symplectic forms. 
Replace $A(0)$ by the cofibrant semi-free $C_0$ algebra $M$. Then diagram \ref{Equ:Lagintcdga} becomes:
\begin{equation}
\label{Equ:diagM}
\begin{tikzcd}
	{C_0} & {A(1)} \\
	{M} & M\otimes_{C_0}A(1)
	\arrow["q'", from=1-1, to=1-2]
	\arrow["p'", from=1-1, to=2-1]
	\arrow["a", from=1-2, to=2-2]
	\arrow["b", from=2-1, to=2-2].
\end{tikzcd}
\end{equation}
We abuse notation to denote the corresponding maps after applying $\BSpec$ also by $p'$, $q'$, and use the same notation for the variables in $C_0$ and $M$; this gives us $p^*\omega_L=\sum_id_{dR}x_id_{dR}\alpha_i$. 
Let \[
\delta:=\sum_id_{dR}x_id_{dR}\tau_i\in(\wedge^{2}\Omega_{A(0)})^{-2}.
\]
Since in $\text{DR}(M)$, we have
\begin{equation}
\label{Equ:delta}
d(d_{dR}x_id_{dR}\tau_i)=d_{dR}x_id_{dR}\alpha_i,
\end{equation}
by \cite[(2.4)]{joycesafronov}, we have $\delta\in \text{Isot}(p, \omega_L)$.

\begin{prop}
The isotropic structure $\delta$ is a Lagrangian structure on $p$ relative to $\omega_L$.
\end{prop}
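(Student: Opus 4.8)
The plan is to verify directly that the induced map $\Theta_\delta:\bT_p\rightarrow \bL_{\BSpec A(0)}[-2]$ is a quasi-isomorphism, in exactly the same spirit as the preceding proposition for $q$, but the computation here is considerably simpler because $p$ is the \emph{zero section}: the map $p':C_0\hookrightarrow M$ sends each $\alpha_k$ to $0$, so none of the structure functions $g_k^j$ enter the formulas. First I would record the structural fact that, since $p$ is a section of the shifted cotangent bundle $T^*[-1]\BSpec A(0)\rightarrow \BSpec A(0)$, its relative tangent complex satisfies $\bT_p\simeq \bL_{\BSpec A(0)}[-2]$; in particular $\bT_p$ is cohomologically concentrated in degree $2$, with $H^2(\bT_p)\cong \bL_{A(0)} = A(0)\la \dd x_i\ra$. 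Since the target $\bL_{\BSpec A(0)}[-2]$ is likewise concentrated in cohomological degree $2$, the whole problem reduces to checking that $\Theta_\delta$ induces an isomorphism on $H^2$.

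To make this explicit I would compute $\bT_p$ as $\text{Cone}(b_p)[-1]$ for the map $b_p:\bT_{\BSpec M}\rightarrow p^*\bT_{\BSpec C_0}$ coming from $p':C_0\hookrightarrow M$, using the cofibrant model $M=C_0[\tau_k]$ of \Cref{Equ:dgresA0}. Here $\bT_{\BSpec M}$ is generated over $M$ by $\frac{\partial}{\partial x_i}$ (degree $0$), $\frac{\partial}{\partial \alpha_k}$ (degree $1$) and $\frac{\partial}{\partial \tau_k}$ (degree $2$), with internal differential $d\frac{\partial}{\partial \alpha_k}=\frac{\partial}{\partial \tau_k}$ forced by $d\tau_k=\alpha_k$, while $b_p$ simply sends $\frac{\partial}{\partial x_i}\mapsto p^*\frac{\partial}{\partial x_i}$, $\frac{\partial}{\partial \alpha_k}\mapsto p^*\frac{\partial}{\partial \alpha_k}$ and $\frac{\partial}{\partial \tau_k}\mapsto 0$. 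Taking cohomology of the cone, $H^0$ and $H^1$ vanish and one is left with $H^2(\bT_p)\cong A(0)\la p^*\frac{\partial}{\partial \alpha_k}\ra$, which confirms the structural claim above and supplies an explicit degree $2$ cycle to feed into $\Theta_\delta$.

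Finally I would assemble $\Theta_\delta=c-\delta$ (with $\delta$ shorthand for the contraction map $\iota_\bullet\delta$), exactly as $\Theta_\nu=c-\nu$ in the $q$ case, where $c$ is the homotopy induced by the canonical nullhomotopy of $\bT_p\rightarrow p^*\bT_{\BSpec C_0}$. The contraction term involves only $\dd x_i$ and $\dd \tau_i$, so it annihilates the class $p^*\frac{\partial}{\partial\alpha_k}$; the value of $\Theta_\delta$ on that class therefore comes entirely from $c$, which -- by the zero-section specialization of \Cref{Equ:c} -- sends $p^*\frac{\partial}{\partial \alpha_k}\mapsto -\dd x_k$. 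Thus on $H^2$ the map $\Theta_\delta$ is, up to sign, the identification $p^*\frac{\partial}{\partial \alpha_k}\leftrightarrow \dd x_k$, an isomorphism; since both complexes are concentrated in cohomological degree $2$ this forces $\Theta_\delta$ to be a quasi-isomorphism, and $\delta$ is Lagrangian. The only real work is the sign- and cone-bookkeeping needed to see that $\Theta_\delta$ is a well-defined chain map -- in particular that the boundary $\frac{\partial}{\partial \tau_k}+p^*\frac{\partial}{\partial \alpha_k}=d\frac{\partial}{\partial \alpha_k}$ maps to zero -- but because $p$ is the zero section this is just the standard computation that the zero section of a shifted cotangent bundle is Lagrangian, and requires no new input beyond the $q$ case.
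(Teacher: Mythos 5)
Your proposal is correct and follows essentially the same route as the paper's proof: identify $\bT_p$ (via the cofibrant model $M=C_0[\tau_k]$) with the $A(0)$-module generated by $p^*\frac{\partial}{\partial\alpha_l}[-1]$ in degree $2$, observe that the contraction against $\delta$ vanishes on these generators while the canonical homotopy sends $p^*\frac{\partial}{\partial\alpha_l}[-1]\mapsto -d_{dR}x_l[-2]$, and conclude that $\Theta_\delta$ is a quasi-isomorphism onto $\bL_{\BSpec A(0)}[-2]$. The extra cone bookkeeping you include (checking $H^0=H^1=0$ and the boundary relation $\frac{\partial}{\partial\tau_k}\sim -p^*\frac{\partial}{\partial\alpha_k}$) is consistent with, and slightly more explicit than, what the paper records.
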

\begin{proof}
Using explicit representatives, we have 
$\bT_p$ is isomorphic to the $M\simeq A(0)$-module generated by 
$p^*\frac{\partial}{\partial \alpha_l}[-1]$ in degree $2$. 
Then we see that the homotopy between $p^*\omega_L: \bT_p\rightarrow \bL_{\BSpec A(0)}[-1]$ and $0$ induced by $\delta$ is just $0$. 

On the other hand, the canonical homotopy between $p^*\omega_L$ and $0$ is given by 
\[
c': p^*{\frac{\partial}{\partial\alpha_l}}[-1]\rightarrow -d_{dR}x_l[-2],
\]
and the rest of the maps are all zero. 
Combining the two homotopies, we define 
\[
\Theta_\delta=c'-\delta.
\]
On the other hand, $\bL_{\BSpec A(0)}[-2]$ is generated by $d_{dR}x_i[-2]$ in degree $2$.
Then it is easy to see that 
\[\Theta_\delta:\bT_p\rightarrow\bL_{\BSpec A(0)}
\]
is a quasi-isomorphism.
\end{proof}

We denote the $(-2)$-shifted symplectic structure on $ \BSpec A(0)\times^h_{T^*[-1]\BSpec A(0)}\BSpec A(1)$ induced by Lagrangian intersections by $R(\omega_L)$. We denote the $(-2)$-shifted symplectic structure from the Darboux form on $\BSpec A$ by $\omega$. We denote $\omega_1\sim\omega_2$ if they are equivalent to each other in the sense of Definition 5.1 in \cite{bbj}. 
\begin{prop}
Using the notation as in the above paragraph, we have $\kappa^*R(\omega_L)\sim\omega$.
\end{prop}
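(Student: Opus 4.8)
The plan is to compute $R(\omega_L)$ explicitly and identify its underlying $(-2)$-shifted $2$-form, following the same template as the even case (\Cref{prop:evencase}). Recall that the residue of $\omega_L$ along the two Lagrangians is built from the two isotropic paths: on the homotopy fibre product $Z := \BSpec A(0)\times^h_{T^*[-1]\BSpec A(0)}\BSpec A(1)$ both $\delta$ (pulled back along the projection to the $A(0)$-factor) and $\nu$ (pulled back along the projection to the $A(1)$-factor) are paths from $0$ to $\omega_L|_Z$ in $\A^{2,cl}(Z,-1)$. Concatenating one with the reverse of the other yields a loop at $0$, i.e. a point of $\A^{2,cl}(Z,-2)$, whose leading $2$-form component is the difference of the two isotropic $2$-forms. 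Thus, exactly as in the even case where we found $R(\omega_L)=\kappa'_*\mu-\kappa_*\nu$, here I expect $R(\omega_L)$ to be represented by $\kappa_*\delta-\kappa_*\nu$ (up to an overall nonzero constant), with all higher components vanishing because $\delta$ and $\nu$ are strict isotropic structures with vanishing higher components.

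Concretely, I would pull both forms through the cofibrant model $M\otimes_{C_0}A(1)$ of $Z$ and the isomorphism $\kappa$ constructed above, which sends $x_i\mapsto x_i$, $y_j\mapsto y_j$ and $\tau_k\mapsto z_k$. Pulling $\nu=-\sum_j\dd y_j\,\dd y_j$ back along $A(1)\to M\otimes_{C_0}A(1)$ gives $\kappa_*\nu=-\sum_j\dd y_j\,\dd y_j$, while pulling $\delta=\sum_i\dd x_i\,\dd\tau_i$ back along $M\to M\otimes_{C_0}A(1)$ and applying $\kappa$ gives $\kappa_*\delta=\sum_i\dd x_i\,\dd z_i$. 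Hence the leading component of $\kappa^*R(\omega_L)$ is
\[
\sum_i\dd x_i\,\dd z_i+\sum_j\dd y_j\,\dd y_j .
\]

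It remains to match this with the Darboux form $\omega=(\omega^0,0,\dots)$ of \cite[Example 5.10]{bbj}, whose leading term pairs the degree $(-1)$ generators with themselves and the degree $0$ generators with the degree $(-2)$ generators, i.e. has the shape $\sum_j\dd y_j\,\dd y_j+\sum_k\dd x_k\,\dd z_k$. The two agree up to the overall constant permitted by $\sim$ (compare the factor $2$ in the even case). I would then check that $\kappa_*\delta-\kappa_*\nu$ is genuinely a closed $(-2)$-shifted $2$-form by verifying $d(\kappa_*\delta-\kappa_*\nu)=0$ in $\DR(A)$; after differentiating, this reduces to the relation $\sum_k h_k g_l^k=0$ (which itself comes from $d\circ d=0$ in $A$) together with its $x$-derivatives, exactly the identities already used above. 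Since both forms have vanishing higher components, the equivalence $\kappa^*R(\omega_L)\sim\omega$ follows.

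The main obstacle is making the residue construction explicit at the level of de Rham representatives: one must unwind the loop-space definition of $R(\omega_L)$ to justify that its underlying $2$-form is precisely $\kappa_*\delta-\kappa_*\nu$ and that the higher components vanish, and then keep careful track of the Koszul signs in the graded de Rham complex (where $\dd y_j$ has odd total degree, so $\dd y_j\,\dd y_j$ is a genuinely nonzero symmetric square). This is the same bookkeeping that appears in \Cref{prop:evencase}, so I would follow that computation closely, the only essential difference being that here the degree $(-1)$ pairing is the quadratic form $\sum_j\dd y_j\,\dd y_j$ rather than a split nondegenerate pairing.
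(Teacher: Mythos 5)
Your proposal is correct and follows essentially the same route as the paper: the residue is computed as the difference of the two strict isotropic $2$-forms, $\kappa_*\delta-\kappa_*\nu=\sum_i\dd x_i\,\dd z_i+\sum_j\dd y_j\,\dd y_j$, which is then matched against the Darboux form of \cite[Example 5.10]{bbj}. The one step you flag as an obstacle --- justifying that the underlying $2$-form of $R(\omega_L)$ really is just $\delta-\nu$ with no extra contribution --- is dispatched in the paper by observing that the pushout square of cdgas commutes strictly (not merely up to homotopy), so the connecting homotopy $u^*(\omega_L)$ in the concatenation $\delta$, $u^*(\omega_L)$, $\nu^{-1}$ vanishes.
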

\begin{proof}
Note that diagram \ref{Equ:diagM} commutes honestly (not just up to homotopy). Then the homotopy $u^*: (q\circ a)^*\rightarrow (p\circ b)^*$ is zero.
By equation \ref{Equ:delta}, the path $0\leadsto p^*\omega_L$ is induced by 
\[
\delta:=\sum_id_{dR}x_id_{dR}\tau_i\in(\wedge^{2}\Omega_{A(0)})^{-2}.
\]
On the other hand, by Equation \ref{Equ:dnu}, the path $0\leadsto q^*\omega_L$ is induced by 
\[\nu=-\sum_jd_{dR}y_jd_{dR}y_j.\]

By the concatenation of $\delta$, $u^*(\omega_L)$, $\nu^{-1}$, we see that 
\[
R(\omega_L)\sim\sum_id_{dR}x_id_{dR}\tau_i+\sum_jd_{dR}y_jd_{dR}y_j.
\]
Using the explicit description of $\omega$ in Example 5.10 of \cite{bbj}, we have $\kappa^*R(\omega_L)\sim\omega$.
\end{proof}

We end this section with a strengthening of some of the results. If one uses Example 5.12 from \cite{bbj} instead of Example 5.10, then \Cref{thm:generalmain} is true Zariski locally, instead of just \'etale locally. This strengthening has been pointed out to us in private communication by Hyeonjun Park, who is working on a similar result which is expected to appear soon. 
We are very grateful for several helpful comments and suggestions from Hyeonjun.

\begin{thm}\label{thm:mainzariski}
Let $\bf X$ be a $(-2)$-shifted symplectic scheme. Then, Zariski locally, $\bf X$ is equivalent to the derived intersection of two Lagrangians inside a $(-1)$-shifted symplectic derived scheme which is the shifted cotangent stack of a smooth classical scheme. 
\end{thm}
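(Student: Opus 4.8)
The plan is to reuse the argument for the general case (\Cref{thm:main}) essentially verbatim, changing only the input normal form. The étale-local statement rested on putting ${\bf X}$ into the strong Darboux form of Example 5.10 in \cite{bbj}; the Zariski strengthening comes from feeding the Zariski-local Darboux model of Example 5.12 in \cite{bbj} into exactly the same construction. So first I would invoke Example 5.12 to obtain, around the given point, a standard-form cdga $A$ carrying a $(-2)$-shifted symplectic structure $\omega$ together with a \emph{Zariski-open} (rather than merely étale) equivalence onto a neighborhood of the chosen point in ${\bf X}$, whose degree-$0$ part is the shifted cotangent stack $T^*[-1]\BSpec A(0)$ of a smooth classical scheme.

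The second step is to confirm that the Zariski normal form of Example 5.12 has the same structural shape the general-case proof depended on: a smooth degree-$0$ algebra $A(0)$ with coordinates $x_k$, degree $-1$ generators $y_j$ and degree $-2$ generators $z_k$, a Hamiltonian $\Phi=\sum_j f_j y_j$ satisfying the classical master equation $\sum_j f_j^2=0$, and differentials $dy_j=h_j=\tfrac12 f_j$, $dz_k=\sum_j g_k^j y_j$ with $g_k^j=\partial f_j/\partial x_k$. Granting this, every construction from the general case transports unchanged: the fiber-product diagram \Cref{eq:lagint}, the zero section $p$ and the map $q$ defined by $\alpha_k\mapsto\sum_j g_k^j y_j$, the cofibrant replacement $M$ and the isomorphism $\kappa$, the isotropic structures $\nu=-\sum_j \dd y_j\,\dd y_j$ and $\delta=\sum_i \dd x_i\,\dd\tau_i$, the verifications that these are Lagrangian structures on $q$ and $p$, and finally the comparison $\kappa^*R(\omega_L)\sim\omega$. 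None of those arguments used anything beyond the explicit shape of the $f_j$ and $g_k^j$, so the matrix computations (for instance the presentation of $\bT_q$ in \eqref{Equ:Tq}) and the homotopies between closed $2$-forms remain literally the same.

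The hard part will be the structural matching in the second step: one must check that Example 5.12 genuinely produces a Zariski-local model of exactly this form — in particular that $A(0)$ is smooth and that the master equation and differentials take the stated expressions — and that the comparison map to ${\bf X}$ is a Zariski-local equivalence rather than only an étale one. If the Zariski normal form presents the Hamiltonian or the symplectic pairing in a different guise, the explicit tangent-complex matrices would have to be re-derived, but since those depend only on the local data $(f_j, g_k^j)$ the adjustment would be cosmetic. Once the matching is in place, the equivalence $\kappa$ and the symplectic comparison are formal, and the Lagrangian intersection description of \Cref{thm:generalmain} holds Zariski-locally, giving the theorem.
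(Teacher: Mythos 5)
Your proposal matches the paper's proof: the paper likewise feeds the Zariski-local Darboux model of \cite[Example 5.12]{bbj} into the general-case construction and notes that the argument goes through after the cosmetic adjustments you anticipated. The one detail worth flagging is that Example 5.12 is \emph{not} literally of the shape you describe in your second paragraph — the degree $-1$ pairing carries invertible weights $q_j$, so the master equation becomes $\frac{1}{4}\sum_j \frac{1}{q_j}\bigl(\frac{\partial\Phi}{\partial y_j}\bigr)^2=0$ and one must set $h_j=\frac{f_j}{2q_j}$, $g_k^j=\frac{\partial f_j}{\partial x_k}-\frac{f_j}{2q_j}\frac{\partial q_j}{\partial x_k}$ and $\nu=-\sum_j d_{dR}(q_jy_j)\,d_{dR}y_j$ — exactly the kind of adjustment your final paragraph allows for.
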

\begin{proof}
Since the proof is very similar to the proof in the \'etale case, we only point out the changes needed. We follow the notation in \cite[Example 5.12]{bbj}. The classical master equation is given by $\frac{1}{4}\sum_{j=1}^{m_1}\frac{1}{q_j}(\frac{\partial \Phi}{\partial y_j})^2=0$. Let $h_j=\frac{f_j}{2q_j}$ and $g_k^j=\frac{\partial f_j}{\partial x_k}-\frac{f_j}{2q_j}\frac{\partial q_j}{\partial x_k}$. Let $\nu=-\sum_j d_{dR}q_jy_jd_{dR}y_j$. Then the argument in this section goes through completely.
\end{proof}

\section{The derived moduli stack of coherent sheaves}
In this section, we work out an explicit example of the construction in Section 2.1, the derived moduli stack of length $d$ sheaves on $\bC^4$. We denote this moduli stack by $\mdc{4}$. For this section we assume that $k = \mb C$.

\subsection{Resolutions of the polynomial algebra}
In this subsection we recall some properties of polynomial algebras that will be useful when constructing cofibrant replacements later. While all results in this subsection are true for general commutative Koszul Calabi-Yau algebras, we restrict ourselves to the polynomial algebra case for concreteness. We work in the setup of \cite{operads}.

Let $V$ be an $n$-dimensional vector space over $k$ with basis $\{ x_1, \ldots, x_n\}$. The symmetric algebra $A := S(V) \simeq k[x_1, \ldots, x_n]$ is a quadratic algebra: it is a quotient of $T(V)$, the tensor algebra of $V$, by the quadratic ideal generated by elements of the form $x_i\otimes x_j - x_j \otimes x_i$. The exterior algebra $\wedge V$ is the Koszul dual coalgebra of $A$, and we will denote it by $\dual A$. We treat it as a nonpositively graded vector space i.e. $(\dual A)^{-k} = \wedge^k V$. The graded coalgebra structure is given by the shuffle coproduct:
\begin{align*}
    \Delta: \wedge^k V &\to \bigoplus_{i=0}^k \wedge^i V \otimes \wedge^{k-i}V\\
    x_1\wedge \ldots\wedge x_k &\mapsto \sum_{i=0}^k \sum_{(i+1, k-i)\text{-shuffles } \sigma} \varepsilon(\sigma) \left(x_{\sigma(0)} \wedge \ldots \wedge x_{\sigma(i)} \right) \otimes \left(x_{\sigma(p+1)} \wedge \ldots \wedge x_{\sigma(k)} \right)
\end{align*}
where we use the notation $x_0 = 1$ and $\varepsilon(\sigma)$ denotes the Koszul sign of $\sigma$. The natural inclusion $k \to \dual A$ is a coaugmentation map for this coalgebra and induces a decomposition $\dual A = k \oplus \overline{\wedge V}$, where $\overline{\wedge V} = \oplus_{k=1}^n \wedge^k V$. The coproduct $\Delta$ induces a reduced coproduct $\overline{\Delta}$ on $\overline{\wedge V}$, given by $\overline{\Delta}(\alpha) = \Delta(\alpha) - 1\otimes \alpha - \alpha \otimes 1$.

Denote by $\cobar A$ the free graded unital associative algebra on $\overline{\wedge V}[-1]$ (i.e. the graded tensor algebra) with product denoted by $\otimes$. We have, for example, that
\[ \left(\cobar A\right)^0 \simeq k\langle x_1, \ldots, x_n\rangle, \]
the free associative algebra on the generators of $V$, and $(\cobar A)^{-1}$ is generated as a $(\cobar A)^0$-module by the generators of $\wedge^2 V$. Note that the product $\otimes$ on $\cobar A$ should not be confused with the wedge product on $\wedge V$. The reduced coproduct $\overline\Delta$ can be extended to a graded derivation $d:\cobar A \to \cobar A$ of degree +1, making $\cobar A$ a differential graded algebra (dga). This is precisely the cobar construction of $\dual A$. The inclusion $V \to A$ (i.e. mapping $x_i$ to $x_i$) induces a dga map $p: \cobar A \to A$. By the Koszul criterion, we have
\begin{prop}[{e.g. \cite[Theorem 3.4.6]{operads}}]
\label{Prop:Koszulbimodres}
    The dga map $p: \cobar A \to A$ is a minimal resolution of $A$. In particular, it is a cofibrant replacement in the category of dgas.
\end{prop}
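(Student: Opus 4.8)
The plan is to establish two independent facts: first, that the map $p \colon \cobar A \to A$ is a quasi-isomorphism, and second, that the resulting free resolution is minimal. Cofibrancy in the category of dgas comes for free, since by construction $\cobar A$ is the free graded associative algebra on $\overline{\wedge V}[-1]$; any such quasi-free (semifree) algebra is cofibrant. Minimality will reduce to checking that the differential lands in the square of the augmentation ideal, which is a short direct computation. So the only substantive content is the acyclicity of $p$, which is precisely the assertion that $A = S(V)$ is a Koszul algebra.

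To establish Koszulity I would invoke the Koszul criterion in its Koszul-complex form. The quadratic data for $A = S(V)$ is the pair $(V, R)$ with relations $R = \wedge^2 V \subset V \otimes V$ embedded antisymmetrically, and its Koszul dual coalgebra is $\dual A = \wedge V$ with $(\dual A)^{-k} = \wedge^k V$. The Koszul complex $\dual A \otimes^\kappa A$ attached to the canonical twisting morphism $\kappa \colon \dual A \to A$ is exactly the classical Koszul complex
\[ 0 \to \wedge^n V \otimes S(V) \to \cdots \to \wedge^1 V \otimes S(V) \to S(V) \to k \to 0, \]
whose differential is the standard contraction $x_{i_1} \wedge \cdots \wedge x_{i_p} \otimes f \mapsto \sum_a (-1)^a\, x_{i_1} \wedge \cdots \widehat{x_{i_a}} \cdots \wedge x_{i_p} \otimes x_{i_a} f$ (with the usual signs). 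This is the familiar free resolution of $k$ over the polynomial ring $k[x_1, \ldots, x_n]$ and is acyclic, with homology $k$ in degree $0$. By the Koszul criterion (the content of \cite[Theorem 3.4.6]{operads}), acyclicity of this complex is equivalent to Koszulity of $A$, which in turn is equivalent to the projection $p \colon \cobar A \to A$ being a quasi-isomorphism. Hence $p$ is a resolution.

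For minimality I would examine the cobar differential directly. By definition $d$ is the degree $+1$ derivation of the free algebra $\cobar A$ extending the reduced coproduct $\overline\Delta$ on the generators $\overline{\wedge V}[-1]$. Since $\overline\Delta(\alpha) \in \bigoplus_{0 < i < k} \wedge^i V \otimes \wedge^{k-i} V$ for $\alpha \in \wedge^k V$, applying $d$ to any single generator yields a sum of products of two generators, i.e.\ an element of the square of the augmentation ideal. Thus $d$ carries no linear part, which is exactly the condition for the semifree resolution to be minimal; combined with the quasi-isomorphism above, this proves the proposition.

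The main obstacle — indeed the only nonformal point — is the bookkeeping needed to match the abstract Koszul-duality apparatus with the concrete combinatorics here: one must verify that the twisting morphism $\kappa$ reproduces precisely the contraction differential of the classical Koszul complex, and that the shuffle coproduct defining $\overline\Delta$ carries the correct Koszul signs so that $d^2 = 0$ and the identification is sign-compatible across the desuspension $[-1]$ and the grading convention $(\dual A)^{-k} = \wedge^k V$. Once this dictionary between $\wedge^k V$, the shifts, and the signs is fixed, the acyclicity of the classical Koszul complex is entirely standard, so I expect no genuine difficulty beyond this sign-tracking.
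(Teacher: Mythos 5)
Your proposal is correct and follows essentially the same route as the paper, which proves this proposition simply by invoking the Koszul criterion of \cite[Theorem 3.4.6]{operads} for the symmetric algebra $A = S(V)$ with Koszul dual coalgebra $\dual A = \wedge V$. Your write-up merely expands that citation into an explicit argument (acyclicity of the classical Koszul complex, minimality from the reduced coproduct landing in the square of the augmentation ideal, cofibrancy from the weight filtration on the semifree algebra $\cobar A$), all of which is standard and accurate.
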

\subsection{Construction}
In this subsection we recall the construction of the derived moduli stack of coherent sheaves of length $d$ on $\C^n$, which is similar to the one in \cite{ricolfisavvas}. We continue using $A$ to denote the polynomial ring $k[x_1, \ldots, x_n]$. The moduli functor $\mnc$ assigns to a cdga $C$ the nerve of the category whose objects are coherent sheaves on $\BSpec C \times^h \C^n$ that are flat, 0-dimensional and of length $d$ over $\BSpec C$. Since such a sheaf is precisely a projective $C$-module of rank $n$ endowed with the structure of an $A$-module, we see that
\[ \mnc(C) \simeq Map_{dgCat}\left(A, Proj_d(C)\right) \]
where the mapping space is computed in the category $dgCat$ of dg-categories, and $Proj_d(C)$ is the dg-category of projective modules of rank $d$ over $C$. Since $A$ here is regarded as a dg-category with one object, a 0-simplex in this mapping space is a map of dgas $A\to End_C(P, P)$ for some $P \in Proj_d(C)$. Let $W$ be a vector space over $k$ of dimension $d$. Working \'{e}tale locally on $C$, we can assume that $P \simeq C\otimes_k W$ so that an endomorphism of $P$ is an endomorphism of $C\otimes_k W$ up to conjugation by $GL_d$.

Recall that $\cobar A$ denotes the cofibrant replacement of $A$ constructed earlier. The mapping space
\[ Map_{dga}\left(A, \End_C(C\otimes_k W)\right) \simeq Map_{dga}\left(A, \End_k(W)\otimes_k C\right) \]
is equivalent, by \cite[Theorem 4.2]{toen}, to the nerve of the category $\mc M(\cobar A, \End_k(W)\otimes_k C)$ whose objects are maps
\[ \cobar A \to Y \]
where $\End_k(W)\otimes_k C \to Y$ is a quasi-isomorphism of dgas, and morphisms are equivalences of these. Define, as in \cite{berest}, the cdga $A_d$ whose generators are $y_i^{jk}, 1 \leq j,k \leq d$ for each $y_i$ generating $\cobar A$, and differentials are those induced by $A$ (i.e. we treat each $y_i^{jk}$ as the $(j, k)^{th}$ entry of a matrix corresponding to $y_i$, and set the matrix differentials accordingly).

The map
\begin{align*}
    \phi: A &\to \End_k(W) \otimes_k A_d\\
    a &\mapsto \sum_{i, j} e_{ij}\otimes a_{ij}
\end{align*}
induces, by composition, a map, which we also call $\phi$, $\mc M(A_d, C) \to \mc M(\cobar A, \End_k(W)\otimes_k C)$. By \cite[Lemma 1]{berest}, this map is an equivalence. Combined with the equivalence $Map_{dga}(A_d, C) \simeq Map_{cdga}(A_d, C)$ (\cite[Theorem 2.12]{longknots}), we conclude that
\begin{equation}\label{eq:mapping}
    Map_{dga}\left(A, End_C(C\otimes_k W)\right) \simeq Map_{cdga}(A_d, C)
\end{equation}

\begin{defn}[\cite{berest}]
    The \textbf{derived representation scheme} parameterizing $d$-dimensional representations of the polynomial algebra $A$ is defined to be $\BSpec A_d$. We denote it by $\drep d$.
\end{defn}
Incorporating the conjugation action by $GL_d$, we have
\begin{lem}
    The map $\phi$ induces an equivalence of derived stacks
    \begin{align}\label{eq:atlas}
        \phi: \mf X_d :=  [\drep d/GL_d] \xrightarrow[]{\sim} \mnc.
    \end{align}
\end{lem}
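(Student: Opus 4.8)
The plan is to exhibit $\drep d = \BSpec A_d$ as a smooth atlas for $\mnc$ realizing the $GL_d$-quotient, and then to invoke the standard groupoid presentation of a quotient stack to conclude that the induced map $\phi$ is an equivalence. The starting point is the moduli description $\mnc(C) \simeq Map_{dgCat}(A, Proj_d(C))$ recalled above: a $C$-point of $\mnc$ is a projective rank-$d$ module $P$ over $C$ together with a dga map $A \to \End_C(P)$, i.e.\ an $A$-module structure on $P$. The map $\phi$ sends a $C$-point of $\drep d$, i.e.\ a cdga map $A_d \to C$, to the $A$-module structure on the \emph{trivial} module $C \otimes_k W$ determined through the equivalence \eqref{eq:mapping}; thus at the level of the atlas it is the map that forgets the choice of trivialization of $P$.

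First I would check that $\phi$ is $GL_d$-equivariant in the appropriate sense: the conjugation action of $GL_d = \text{Aut}_C(C\otimes_k W)$ on $Map_{dga}(A, \End_C(C\otimes_k W))$ corresponds, under \eqref{eq:mapping}, exactly to the $GL_d$-action on $\drep d$ by which we form the quotient, and two $A$-module structures on $C \otimes_k W$ related by such a $g$ yield isomorphic sheaves. Hence $\phi$ descends to a morphism $\phi: \mf X_d = [\drep d / GL_d] \to \mnc$ of derived stacks, as claimed.

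To prove $\phi$ is an equivalence I would verify the two conditions for a quotient-stack presentation. For \emph{surjectivity} (after \'etale sheafification), I would use that a flat, $0$-dimensional, length-$d$ coherent sheaf corresponds to a projective $C$-module of rank $d$, and that finitely generated projective modules are \'etale-locally free; trivializing $P \simeq C \otimes_k W$ \'etale-locally then expresses every $C$-point of $\mnc$, locally, as a point of $\drep d$ via \eqref{eq:mapping}, so $\drep d \to \mnc$ is a smooth surjection. For the \emph{groupoid relation}, I would compute the homotopy fiber product $\drep d \times^h_{\mnc} \drep d$: its $C$-points are pairs of $A$-module structures $\rho_1, \rho_2$ on $C \otimes_k W$ together with an isomorphism of the associated sheaves, and such an isomorphism is precisely an element $g \in GL_d(C)$ intertwining $\rho_1$ and $\rho_2$. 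The assignment $(\rho_1, g) \mapsto (\rho_1, g\cdot \rho_1, g)$ then identifies $\drep d \times^h_{\mnc} \drep d \simeq \drep d \times GL_d$ compatibly with source, target and composition, which is exactly the statement that $\mnc \simeq [\drep d / GL_d]$ and that $\phi$ is an equivalence.

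The main obstacle, I expect, is the passage from the \'etale-local identification \eqref{eq:mapping} --- which was derived only after fixing a trivialization $P \simeq C \otimes_k W$ --- to a global equivalence of stacks. Concretely, one must check that the stack $Proj_d$ of projective rank-$d$ modules is the classifying stack $BGL_d$, so that the ambiguity in the trivialization is faithfully recorded by the $GL_d$-quotient, and that \eqref{eq:mapping} together with its $GL_d$-action glues along the \'etale covers witnessing local freeness. Once this descent is in place, the remaining verifications are the routine matching of the $GL_d$-actions and of the automorphism groups described above.
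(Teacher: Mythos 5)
Your proposal is correct and follows essentially the same route the paper takes: the paper states this lemma without a separate proof, treating it as an immediate consequence of the preceding discussion (the \'etale-local trivialization $P \simeq C\otimes_k W$, the chain of equivalences culminating in \eqref{eq:mapping}, and the observation that the trivialization ambiguity is exactly conjugation by $GL_d$). Your write-up simply fills in the standard atlas/groupoid verification ($Proj_d \simeq BGL_d$, descent along \'etale covers, and $\drep d \times^h_{\mnc} \drep d \simeq \drep d \times GL_d$) that the paper leaves implicit.
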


\begin{ex}\label{eg:c4}
    We work out the details for the case $n=4$ since we will be using these later. Here $V$ is a four-dimensional vector space with basis $\{ x_1, \ldots, x_4 \}$, $A = k[x_1, \ldots, x_4]$, and $\cobar A$ is the semifree dg-algebra generated over $k$ by
    \begin{align*}
    x_i,\; & 1 \leq i \leq 4 &\text{ in degree $0$, with } & d(x_i) = 0\\
    c_{ij},\; & 1 \leq i < j \leq 4 &\text{ in degree $-1$, with } & d(c_{ij}) = [x_i, x_j]\\
    &s_{132}, s_{124}, s_{143}, s_{234} &\text{ in degree $-2$, with } & d(s_{ijk}) = [x_i,c_{jk}] + [x_j,c_{ki}] + [x_k,c_{ij}]\\
    &t &\text{ in degree $-3$, with } & d(t) = \Big(\Big.[x_1, s_{234}] + [x_2, s_{143}] + [x_3, s_{124}] + [x_4, s_{132}]\\
    &&& + [c_{12}, c_{34}] + [c_{13}, c_{42}] + [c_{14}, c_{23}] \Big.\Big),
    \end{align*}
    where we adopt the convention that $c_{lk} = - c_{kl}$ for $k, l \in \{ 1, \ldots, 4 \}$.
    
    Then $A_d$ is the semifree cdga generated over $k$ by the entries (denoted by the superscript $\mu\nu$ for $1 \leq \mu, \nu \leq d$) of the $d \times d$ matrices
    \begin{align*}
    X_i,\;  & 1 \leq i \leq 4 &\text{ in degree $0$, with } & d(X_i^{\mu\nu}) = 0\\
    C_{ij},\; & 1 \leq i < j \leq 4 &\text{ in degree $-1$, with } & d(C_{ij}^{\mu\nu}) = [X_i, X_j]^{\mu\nu}\\
    &S_{234}, S_{143}, S_{124}, S_{132} &\text{ in degree $-2$, with } & d(S_{ijk}^{\mu\nu}) = \left([X_i,C_{jk}] + [X_j,C_{ki}] + [X_k,C_{ij}]\right)^{\mu\nu}\\
    &T &\text{ in degree $-3$, with } & d(T^{\mu\nu}) = \Big(\Big. [X_1, S_{234}] + [X_2, S_{143}] + [X_3, S_{124}]\\
    &&& + [X_4, S_{132}] + [C_{12}, C_{34}] + [C_{13}, C_{42}] + [C_{14}, C_{23}] \Big.\Big)^{\mu\nu},
    \end{align*}
    where we once again adopt the convention that $C_{lk} = - C_{kl}$ for $k, l \in \{ 1, \ldots, 4 \}$.
\end{ex}

\subsection{Universal object}
We now give a description of the universal object for the stack $\mnc$.
\begin{defn}
    An object $\ms E \in QCoh(\mnc \times^h \mb C^n)$ is a \textbf{universal object} for the stack $\mnc$ if, for any map $x:\BSpec C \to \mnc$ from an affine derived scheme, corresponding to an object $F \in QCoh(\BSpec C \times^h \mb C^n)$, there is an equivalence $x^*\ms E \simeq F$.
\end{defn}

\begin{prop}[{\cite[Corollary 2.6]{bd19}}]
    There exists a universal object $\ms E$ for the stack $\mnc$.
\end{prop}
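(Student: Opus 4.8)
The plan is to build $\ms E$ by hand on the smooth atlas $\phi: \mf X_d = [\drep d / \GL_d] \xrightarrow{\sim} \mnc$ of \eqref{eq:atlas}, and then to descend it along $\drep d \to \mf X_d$. Conceptually $\ms E$ should be the family classified by the identity map of $\mnc$; since $\mnc$ is not affine, I would realize this through its presentation as a quotient stack rather than through a direct Yoneda argument.

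First I would produce the tautological family on $\drep d = \BSpec A_d$. Because $A = k[x_1, \dots, x_n]$ is free over $k$, there is no derived correction and $\drep d \times^h \C^n = \BSpec(A_d \otimes_k A)$. The structure map $\phi: A \to \End_k(W) \otimes_k A_d$ endows the free rank-$d$ module $A_d \otimes_k W$ with the structure of an $(A_d \otimes_k A)$-module, hence defines a quasi-coherent sheaf $\widetilde{\ms E}$ on $\drep d \times^h \C^n$ that is flat, $0$-dimensional and of length $d$ over $\drep d$. By the construction of $A_d$ together with the mapping-space equivalence \eqref{eq:mapping}, a map $\BSpec C \to \drep d$ is the same datum as an $A$-module structure on $C \otimes_k W$, and pulling back $\widetilde{\ms E}$ along such a map returns precisely the corresponding family; thus $\widetilde{\ms E}$ already satisfies the universal property relative to the atlas.

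Next I would upgrade $\widetilde{\ms E}$ to a $\GL_d$-equivariant sheaf. The group $\GL_d$ acts on $A_d$ by simultaneous conjugation of the matrices $X_i, C_{ij}, \dots$ and on $W$ through its standard representation; these two actions are compatible with $\phi$, so $\widetilde{\ms E}$ carries a canonical equivariant structure. Invoking quasi-coherent descent along the smooth cover $\drep d \to [\drep d / \GL_d]$, this equivariant sheaf descends to an object $\ms E \in QCoh(\mf X_d \times^h \C^n) \simeq QCoh(\mnc \times^h \C^n)$.

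Finally I would check the universal property for an arbitrary $x: \BSpec C \to \mnc$ classifying a family $F$. Étale-locally on $C$ the projective module underlying $F$ trivializes as $C \otimes_k W$, so $x$ lifts to the atlas; the first step then gives $x^*\ms E \simeq F$ locally, and the equivariant descent datum guarantees that these identifications are independent of the chosen trivialization and glue to a global equivalence. The main obstacle I anticipate is this descent step: one must verify that the conjugation-equivariant structure satisfies the cocycle condition on the derived \v{C}ech nerve of the atlas and that quasi-coherent descent is available for the smooth, stacky (non-étale) cover in the derived setting. Once descent is secured, the universal property reduces to the tautological pullback computation on $\drep d$.
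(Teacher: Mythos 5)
Your construction is essentially sound, but it takes a genuinely different route from the paper: the paper offers no proof at all, simply importing the statement as \cite[Corollary 2.6]{bd19}, where the existence of a universal object is established for moduli of objects in a dg-category by a general descent/Yoneda argument --- namely that $QCoh(\mnc \times^h \C^n)$ is the limit of $QCoh(\BSpec C \times^h \C^n)$ over all affines mapping to $\mnc$, and the tautological assignment $(x : \BSpec C \to \mnc) \mapsto F_x$ is a compatible family, hence defines a global object. Your approach instead builds $\ms E$ by hand: the tautological module $A_d \otimes_k W$ on $\drep d \times^h \C^n$, its conjugation-equivariant structure, and descent through $QCoh([\drep d/\GL_d]) \simeq QCoh^{\GL_d}(\drep d)$. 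This buys concreteness --- indeed it effectively proves the paper's Proposition \ref{Prop:Universalobj} (that $(\phi \circ h)^*\ms E \simeq F_d$) at the same time, rather than as a separate ``almost tautological'' verification --- at the cost of having to re-prove the universal property by hand for maps that do not factor through the atlas.

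That last step is where your argument is thinnest, and it is worth being precise about why it closes. After choosing \'etale-local lifts of $x : \BSpec C \to \mnc$ to $\drep d$ and identifying $x^*\ms E$ with $F$ locally, the discrepancies between lifts on overlaps are controlled by $\GL_d$-valued transition data on the \v{C}ech nerve of the cover of $\BSpec C$, and in the derived setting the compatibility of the local equivalences is a \emph{structure} (a coherent homotopy on every simplicial level), not a condition one checks on double overlaps. Saying ``the equivariant descent datum guarantees'' this is not quite a proof; the honest statement is that the local equivalences assemble into a map of descent data for $QCoh$ along the \'etale cover of $\BSpec C$, which is exactly the same limit formula the citation uses globally. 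So your argument is correct, but once you spell out the coherence you have essentially reconstructed the proof of \cite[Corollary 2.6]{bd19} in this special case; the general argument is not avoidable, only relocated.
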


\begin{prop}[{\cite[Proposition 3.3]{bd19}}]
\label{Prop:tangentcomp}
    There is an isomorphism of Lie algebras
    \[ \mb T_{\mnc} \xrightarrow{\sim} \RHom_{\mnc}\left(\Upsilon\ms E, \Upsilon\ms E\right)[1],\]
    where $\Upsilon\ms E := \omega_{\mnc} \otimes\ms E$ for $\omega_{\mnc}$ the dualizing complex on $\mnc$.
\end{prop}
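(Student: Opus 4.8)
The plan is to realize $\mnc$ as a derived moduli stack of objects and read off its tangent complex from deformation theory, identifying it with the endomorphism algebra of the universal object. The equivalence $[\drep d/\GL_d]\xrightarrow{\sim}\mnc$ of \eqref{eq:atlas}, together with the identification \eqref{eq:mapping}, exhibits $\mnc$ as (an open substack of) the derived moduli stack $\mc M_{\mc T}$ of objects of the dg-category $\mc T=\Perf(\C^n)$. For such a moduli of objects the tangent complex at a point classifying a sheaf $F$ is canonically $\RHom_{\C^n}(F,F)[1]$, by the theory of moduli of objects (Toën--Vaquié, Lurie); the shift places the infinitesimal automorphisms $\Hom(F,F)$ in degree $-1$, first-order deformations $\Ext^1(F,F)$ in degree $0$, and obstructions $\Ext^{\geq 2}(F,F)$ in degrees $\geq 1$. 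First I would record this pointwise statement as the target.

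Next I would globalize it using the universal object $\ms E$ (which exists by the preceding proposition). The required morphism is the relative, stacky Kodaira--Spencer / universal Atiyah-class map
\[ \bT_{\mnc}\longrightarrow \RHom_{\mnc}(\ms E,\ms E)[1], \]
where $\RHom_{\mnc}(-,-)=\pi_*\mathcal{R}\mathcal{H}om(-,-)$ is pushed forward along $\pi:\mnc\times^h\C^n\to\mnc$; this pushforward is well-behaved because length-$d$ sheaves are supported finitely, hence properly, over $\mnc$. To prove the map is an equivalence I would argue fiberwise: pulling back along an arbitrary $x:\BSpec C\to\mnc$ and applying flat base change reduces the claim to the pointwise computation of the first paragraph, so the cone has vanishing cohomology and the map is a quasi-isomorphism of perfect complexes.

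I then account for the twist $\Upsilon\ms E=\omega_{\mnc}\otimes\ms E$. Since $\ms E$ is perfect, the projection formula for the internal hom together with the defining biduality property $\mathcal{R}\mathcal{H}om(\omega_{\mnc},\omega_{\mnc})\simeq\mathcal{O}_{\mnc}$ of a dualizing complex yields a canonical equivalence
\[ \RHom_{\mnc}(\Upsilon\ms E,\Upsilon\ms E)\;\simeq\;\RHom_{\mnc}(\ms E,\ms E). \]
Thus the twist leaves the underlying object unchanged; its purpose is to display the self-duality that the Calabi--Yau structure of $\C^n$ (Serre duality, $\omega_{\C^n}\simeq\mathcal{O}[n]$) and Grothendieck duality on $\mnc$ impose on the tangent complex, which is precisely the input needed downstream for the $(2-n)$-shifted symplectic form.

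Finally I would upgrade the equivalence of complexes to one of shifted Lie algebras. On the right, composition makes $\RHom_{\mnc}(\ms E,\ms E)$ an associative algebra object, so $\RHom_{\mnc}(\ms E,\ms E)[1]$ is a shifted Lie algebra under the graded commutator; on the left, $\bT_{\mnc}$ carries the canonical shifted Lie structure of the tangent complex of a derived Artin stack, coming from the inertia/loop stack. The content is that the Kodaira--Spencer map intertwines them, which I would deduce from the functoriality of the moduli-of-objects construction: the bracket on $\bT_{\mc M_{\mc T}}$ is by construction the commutator bracket on endomorphisms (equivalently, the formal deformation problem at $F$ is controlled by the dg-Lie algebra $\RHom(F,F)$). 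I expect this last step to be the main obstacle, since matching the brackets requires tracking coherent homotopies rather than only cohomology, and the twist of the third paragraph must be carried along Lie-algebra-compatibly. As a concrete cross-check, on the atlas $\drep d=\BSpec A_d$ the cobar resolution of \Cref{Prop:Koszulbimodres} computes $\RHom(W,W)$ as $\End(W)\otimes\bigwedge^\bullet V^*$ with commutator differentials; matrix-ifying reproduces exactly the generators $X_i,C_{ij},S_{ijk},T$ of \Cref{eg:c4}, and dualizing and shifting recovers $\bT_{[\drep d/\GL_d]}$, confirming both the formula and its bracket explicitly.
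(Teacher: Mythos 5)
The paper does not prove this proposition: it is imported verbatim from \cite[Proposition 3.3]{bd19}, so there is no in-paper argument to compare yours against. That said, your sketch is, in outline, a faithful reconstruction of how the cited source obtains the result: identify $\mnc$ with an open substack of the To\"en--Vaqui\'e moduli of objects of $\Perf(\C^n)$, read off the pointwise tangent complex as $\RHom(F,F)[1]$ from deformation theory, globalize via the universal object and base change (using that length-$d$ sheaves are proper over the base), and get the Lie structure from the fact that the formal moduli problem at $F$ is controlled by the dg-Lie algebra $\RHom(F,F)$. Two cautions. First, the step you flag as the main obstacle --- matching the shifted Lie bracket on $\bT_{\mnc}$ coming from the inertia stack with the commutator bracket on endomorphisms of the universal object --- is precisely the content of the cited proposition, not a routine consequence of ``functoriality''; any complete write-up has to prove that comparison, which in \cite{bd19} is a theorem about moduli of objects in its own right. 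Second, be careful with $\Upsilon$: in the source it is tensoring with the (inverse) dualizing bimodule of the dg-category $\Perf(\C^n)$, i.e.\ essentially with $\omega_{\C^n}[n]$ on $\mnc\times^h\C^n$, rather than with a complex living only on $\mnc$ as the paper's wording suggests. Your observation that the twist is invertible and therefore does not change $\RHom(\ms E,\ms E)$ up to equivalence is correct and explains why the tangent-complex statement is insensitive to it, but the specific form of $\Upsilon$ is what later produces the Serre pairing used in \Cref{thm:compareforms}, so it cannot be discarded entirely. In sum: a reasonable sketch of the right strategy, with the hardest step deferred rather than carried out.
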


Let $h: \drep{d} \to \mf X_d$ denote the natural projection map. Then $\left(\phi\circ h\right)^*\ms E$ is an element of $QCoh(\drep{d}\times^h \mb C^n)$. Let $R_d := A_d\otimes_k A$ and denote by $F_d$ the $R_d$-module $A_d\otimes_k W$.

\begin{prop}
\label{Prop:Universalobj}
    There is an equivalence $\left(\phi\circ h\right)^*\ms E \simeq F_d$.
\end{prop}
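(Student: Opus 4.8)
The plan is to apply the universal property of $\ms E$ directly to the map $\phi\circ h$. Since $\drep d = \BSpec A_d$ is affine and $A = k[x_1,\ldots,x_n]$ is flat over $k$, the derived product $\drep d \times^h \C^n$ is equivalent to $\BSpec R_d$, and $\left(\phi\circ h\right)^*\ms E$ is obtained by pulling back $\ms E$ along $(\phi\circ h)\times \mathrm{id}_{\C^n}$. By the defining property of a universal object, this pullback is equivalent to the object $F\in QCoh(\BSpec R_d)$ that the map $\phi\circ h\colon \drep d \to \mnc$ classifies. Thus the proposition reduces to identifying $F$ as an $R_d$-module and checking that it coincides with $F_d$.

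To pin down $F$, I would trace through the construction of the atlas \eqref{eq:atlas}. Under the equivalence \eqref{eq:mapping} with $C = A_d$, the classifying map $\phi\circ h$ corresponds to a point of $Map_{cdga}(A_d, A_d)$, namely the identity. Unwinding the chain of equivalences used to build \eqref{eq:mapping} --- those of \cite[Theorem 4.2]{toen}, \cite[Lemma 1]{berest}, and \cite[Theorem 2.12]{longknots} --- back to $Map_{dga}\left(A, \End_{A_d}(A_d\otimes_k W)\right)$, the identity is carried precisely to the algebra map $\phi\colon A \to \End_k(W)\otimes_k A_d$. This map equips the free rank-$d$ module $A_d\otimes_k W$ with an $A$-action which, together with its evident $A_d$-module structure, is exactly the $R_d = A_d\otimes_k A$-module $F_d$. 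Hence the universal representation carried by $\drep d$ is $F_d$, so that $F\simeq F_d$ and the result follows.

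The main work --- and the step I expect to be the real obstacle --- is the bookkeeping in this last identification: one must verify that each equivalence in the chain sends the tautological class to $F_d$ equipped with the \emph{correct} combined module structure, rather than a twist or conjugate of it. Concretely, I would check that the identification in \eqref{eq:mapping} is natural in $C$ and sends the universal family to the tautological representation, so that naturality forces its value on $\mathrm{id}_{A_d}$ to be $\phi$. I would also confirm that pulling back along $h\colon \drep d \to \mf X_d$ merely forgets the $GL_d$-equivariant structure and returns the underlying $R_d$-module; since the universal object on $\mf X_d$ is $\phi^*\ms E$ under \eqref{eq:atlas}, this is immediate from functoriality of pullback. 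Throughout, flatness of $A$ over $k$ guarantees that the derived pullbacks agree with the underived ones, so no higher corrections arise.
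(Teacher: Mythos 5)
Your proposal is correct and follows essentially the same route as the paper: identify $\phi\circ h$ with the identity in $Map_{cdga}(A_d,A_d)$ and trace it back through the equivalence \eqref{eq:mapping} to the tautological module $A_d\otimes_k W$ with $A$-action given by $\phi$. The paper's proof is exactly this (and calls it ``almost tautological''); your additional remarks on naturality and on $h$ forgetting the $GL_d$-equivariance are just a more explicit rendering of the same argument.
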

\begin{proof}
    The proof is almost tautological. The map $h$ corresponds (up to conjugation by $GL_d$) to the identity map in $Map_{cdga}(A_d, A_d)$ by definition. This corresponds, under composition with $\phi$ and the equivalence \Cref{eq:mapping}, to the $A_d$-module $F_d$ with $A$-action given by $\phi$.
\end{proof}

\subsection{Pullback of the symplectic structure to an atlas}
We now restrict to the four-dimensional case, i.e. $n=4$ and we are interested in understanding the symplectic forms on $\mdc 4$. We will use notation consistent with \Cref{eg:c4}.

Recall the equivalence \Cref{eq:atlas} that realizes $\drep d$ as an atlas (in fact a \textit{standard form open neighborhood}, though not necessarily minimal, in the sense of \cite[Definition 2.7]{bbbbj}) for the stack $\mf X_d \simeq \mdc 4$. In this subsection we will compute the pullback of the canonical $(-2)$-shifted symplectic form $\omega_{\mdc 4}$ under the composition
\[ \drep d \xrightarrow{h} \mf X_d \xrightarrow{\phi} \mdc 4. \]
Since most of the content here is completely analogous to Sections 3 and 4 in \cite{katzshi}, we only sketch the results. 

We first give an identification of the following composition of quasi-isomorphisms using representatives:
\begin{equation}
\label{Equ:tancompiso}
h^*\bT_{\mathfrak{X}_d}\simeq (\phi\circ h)^*\bT_{\mathcal{M}^d(\mathbb{C}^4)}\simeq \RHom_{A_d}(F_d, F_d)[1],
\end{equation}
where the second quasi-isomorphism is obtained by combining Propositions \ref{Prop:tangentcomp} and \ref{Prop:Universalobj}.

Since $h^*\mathbb{L}_{\mathfrak{X}_d}=\Omega^1_{A_d}\rightarrow\mathfrak{g}^\vee\otimes A_d$, we have
\begin{equation*}
h^*\bT_{\mathfrak{X}_d}\simeq A_d^{d^2}\rightarrow A_d^{4d^2}\rightarrow A_d^{6d^2}\rightarrow A_d^{4d^2}\rightarrow A_d^{d^2}.
\end{equation*}
This is a complex of $A_d$-modules concentrated in degrees $-1$, $0$, $1$, $2$, $3$.

Next we consider $\RHom_{A_d}(F_d, F_d)$. Computing this will first require us to find a resolution for $F_d$. For this we will use results (and notation) from \cite{ginzburg}.
Consider the semifree dg-algebra $B$ which is the dg-subalgebra of $\Omega{\dual A}$ generated by $x_i$, $c_{ij}$ and $s_{ijk}$ with the same differentials.
The $B$-dg-module $\Omega^1 B$ is generated by the $d_{dR}x_i, d_{dR}c_{ij}, d_{dR}s_{ijk}$ while $\Der B$ is generated by $\partial/\partial x_i, \partial/\partial c_{ij}, \partial/\partial s_{ijk}$. Define the 2-form $\omega_B \in \Omega^2 B$ by
\begin{align*}
\omega_B &= d_{dR}x_1\otimes d_{dR}s_{234} + d_{dR}x_2\otimes d_{dR}s_{143} + d_{dR}x_3\otimes d_{dR}s_{124} + d_{dR}x_4\otimes d_{dR}s_{132}\\
& + d_{dR}c_{12}\otimes d_{dR}c_{34} + d_{dR}c_{13}\otimes d_{dR}c_{42} + d_{dR}c_{14}\otimes d_{dR}c_{23}.
\end{align*}
Then the triple $(B, \omega_B, d)$ is a symplectic data in the sense of \cite[\S 3.6]{ginzburg}, and the exact sequence (3.8.7) in op. cit. gives $A$-bimodule resolutions of $A$:
\begin{equation}
\label{bimodres}
\begin{tikzcd}
A\otimes A\arrow{r}\arrow[swap]{d} & \Der_0(B \vert A) \arrow{r}\arrow{d}{\iota_{\omega_B}} & \Der_{1}(B \vert A) \arrow{r}\arrow{d}{\iota_{\omega_B}} & \Der_{2}(B \vert A) \arrow{r}\arrow{d}{\iota_{\omega_B}} & A\otimes A \arrow{d}\\
A\otimes A\arrow{r} & \Omega_{-2}(B \vert A) \arrow{r} & \Omega_{-1}(B \vert A) \arrow{r}& \Omega_{0}(B \vert A) \arrow{r} & A\otimes A 
\end{tikzcd}
\end{equation}
where the upper row are viewed as a complex of $A\otimes A$-modules endowed with the ``inner'' bimodule structure i.e. $a(b'\otimes b'')c = b'c\otimes ab''$ (in contrast with the usual or ``outer'' bimodule structure $a(b'\otimes b'')c = ab'\otimes b''c$).

The isomorphism 
\begin{align*}
    \Der_{1}(B \vert A) &\simeq \Omega^1_{-1}(B \vert A)\\
    \frac{\partial}{\partial c_{ij}} &\mapsto d_{dR} c_{kl}, \qquad i, j \neq k, l
\end{align*}
is the one induced by contraction with $\omega_B$. 

We identify $V$ with the $k$-vector space with basis the (four) $B$-generators of $\Omega^1_0B$, and identify $\wedge^2 V$ with the $k$-vector space with basis the (six) $B$-generators of $\Omega^1_1 B$. Taking the first three terms in the upper row of (\ref{bimodres}), and the last three terms in the lower row of (\ref{bimodres}), we obtain a bimodule resolution of $A$ as
\begin{equation}\label{quiverres}
A\otimes A
\xrightarrow{\alpha^{-3}} A\otimes V^*\otimes A\xrightarrow{\alpha^{-2}}
A\otimes \wedge^2 V^* \otimes A \simeq A\otimes \wedge^2 V \otimes A
\xrightarrow{\alpha^{-1}}A\otimes V\otimes A\xrightarrow{\alpha^0} A\otimes A
\end{equation}
with the isomorphism $\wedge^2 V^* \xrightarrow{\sim} \wedge^2 V$ given by interior product with the volume form $x_1 \wedge x_2 \wedge x_3 \wedge x_4$ on $V$. This complex is analogous to the $3$-dimensional Calabi-Yau algebra case in \cite[Proposition 5.1.9]{ginzburg}.

Denoting $A_d\otimes A$ by $R_d$ and tensoring the complex \Cref{quiverres} with $R_d$ on the right over $A$ and $F_d$ on the left over $A$, we obtain an $R_d$-module resolution of $F_d$, which we denote by $F^\bullet$:
\begin{equation}
\label{Equ:ResF}
R_d\otimes F_d
\xrightarrow{\alpha^{-3}}R_d\otimes V^*\otimes F_d\xrightarrow{\alpha^{-2}}
R_d\otimes \wedge^2 V^* \otimes F_d \simeq F_d\otimes \wedge^2 V \otimes R_d
\xrightarrow{\alpha^{-1}}F_d\otimes V\otimes R_d\xrightarrow{\alpha^0} F_d\otimes R_d.
\end{equation}
Then $\RHom_{A_d}(F_d, F_d)$ is quasi-isomorphic to 
\begin{equation}
	\label{Equ:RHom}
	\begin{split}
		 F_d^*\otimes F_d\xrightarrow{\beta^{0}}
		 F_d^*\otimes V^*\otimes F_d\xrightarrow{\beta^{1}}
		 F_d^*\otimes \wedge^2V^*\otimes F_d\simeq  F_d\otimes \wedge^2V\otimes F_d^*\xrightarrow{\beta^{2}}F_d\otimes V\otimes F_d^*\xrightarrow{\beta^{3}}F_d\otimes F_d^*.
	\end{split}
\end{equation}
We denote this complex by $L^\bullet$. This is a complex of $A_d$ modules with generators in degrees $0$, $1$, $2$, $3$, $4$. 
Recall that $F_d=A_d\otimes_k W$. The following result is analogous to \cite[Lemma 8]{katzshi}:
\begin{prop}
\label{Prop:cotaniso}
    Let $\{f_i\}$ be a set of basis of $W$. Using the explicit representative $L^\bullet$ for $\RHom_{A_d}(F_d, F_d)$, the equivalence 
$   h^*\bT_{\mathfrak{X}_d}\simeq (\phi\circ h)^*\bT_{\mathcal{M}^d(\mathbb{C}^4)}\simeq \RHom_{A_d}(F_d, F_d)[1]$ 
in \Cref{Equ:tancompiso} is explicitly defined by the maps: 
\begin{equation}
\label{Equ:maptangentcomplex}
\begin{split}
\gamma^0: \hspace{5pt} 
& h^*\mb T_{\mathfrak{X}_d}^{0}\rightarrow  L[1]^{0},
\hspace{5pt}
\frac{\partial}{\partial X^{\mu\nu}_i}\mapsto f^*_\mu\otimes x^*_i\otimes f_\nu.\\
\gamma^{1}: \hspace{5pt}
& h^*\mb T_{\mathfrak{X}_d}^{1}\rightarrow  L[1]^{1}, \hspace{5pt}
 \frac{\partial}{\partial C_{ij}^{\mu\nu}}\mapsto f_\nu\otimes x_k\wedge x_l\otimes f_\mu^*.\\
\gamma^{2}: \hspace{5pt}
& h^*\mb T_{\mathfrak{X}_d}^{2}\rightarrow  L[1]^{2}, \hspace{5pt}
\frac{\partial}{\partial S_{ijk}^{\mu\nu}}\mapsto f_\nu\otimes x_{\{1234\}\setminus \{ijk\}}\otimes f_\mu^*.\\
\gamma^{3}: \hspace{5pt}
& h^*\mb T_{\mathfrak{X}_d}^{3}\rightarrow  L[1]^{3}, \hspace{5pt}
\frac{\partial}{\partial T^{\mu\nu}}\mapsto f_{\nu}\otimes f_\mu^*.\\
 \gamma^{-1}: \hspace{5pt}
 & h^*\mb T_{\mathfrak{X}_d}^{-1}\rightarrow  L[1]^{-1}, \hspace{5pt}
\frac{\partial}{\partial G^{\mu\nu}}\mapsto f_\mu^*\otimes f_\nu.
\end{split}
\end{equation}
Here $G^{\mu\nu}$'s are the generators of $h^*\mb T_{\mathfrak{X}_d}$ of degree $-1$. 
\end{prop}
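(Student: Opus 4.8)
The plan is to make the two abstract equivalences composing \Cref{Equ:tancompiso} concrete and to check that the resulting isomorphism carries each free generator of $h^*\bT_{\mathfrak{X}_d}$ to the stated representative in $L^\bullet[1]$. The first equivalence $h^*\bT_{\mathfrak{X}_d}\simeq(\phi\circ h)^*\bT_{\mathcal{M}^d(\mathbb{C}^4)}$ is simply induced by the atlas equivalence \Cref{eq:atlas}, so all the content sits in the second equivalence, which unwinds \Cref{Prop:tangentcomp} (tangent complex $=$ shifted self-$\RHom$ of the universal object) together with \Cref{Prop:Universalobj} (the universal object is $F_d$). Since the computation is the four-dimensional analogue of \cite[Lemma 8]{katzshi}, I would follow that blueprint: represent $\RHom_{A_d}(F_d,F_d)$ by $L^\bullet$ built from the Ginzburg resolution \Cref{quiverres}, and track the deformation-theoretic meaning of each tangent generator through this representative.

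The identification is anchored in degrees $-1$ and $0$, where it is forced by deformation theory. A degree-$0$ tangent vector $\partial/\partial X_i^{\mu\nu}$ is a first-order deformation of the representation $A\to\End_k(W)\otimes_k A_d$, namely an infinitesimal change of the matrix assigned to $x_i$ in the $(\mu,\nu)$ entry; under the standard identification of the degree-$0$ part of $\bT_{\drep d}$ with $V^*\otimes\End_k(W)$ and with the $\Ext^1$-classes of $F_d$, this is exactly the chain-level cocycle $f^*_\mu\otimes x^*_i\otimes f_\nu$ in $L[1]^0=F_d^*\otimes V^*\otimes F_d$, giving $\gamma^0$. Likewise the degree-$(-1)$ generators $G^{\mu\nu}$ span the infinitesimal $\mathfrak{g}=\gl d$ acting by conjugation, i.e.\ the $\Hom(F_d,F_d)=\End$-classes $f^*_\mu\otimes f_\nu$ in $L[1]^{-1}=F_d^*\otimes F_d$, giving $\gamma^{-1}$. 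Neither of these involves a volume-form identification, and both are identical in form to the $n=3$ case, so they require no new input.

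The remaining maps $\gamma^1,\gamma^2,\gamma^3$ are then pinned down by compatibility with the differentials, and their shape reflects the self-duality of \Cref{quiverres}. The generators $C_{ij}$, $S_{ijk}$, $T$ of $A_d$ are indexed respectively by $\wedge^2 V$, $\wedge^3 V$, $\wedge^4 V$, matching the degree $-1,-2,-3$ generators $c_{ij},s_{ijk},t$ of $\cobar A$, so the corresponding tangent directions live in $\wedge^2 V^*$, $\wedge^3 V^*$, $\wedge^4 V^*$; meanwhile the slots $L[1]^1,L[1]^2,L[1]^3$ are written, after the switch between the inner and outer bimodule structures in \Cref{bimodres}, as $F_d\otimes\wedge^2 V\otimes F_d^*$, $F_d\otimes V\otimes F_d^*$, $F_d\otimes F_d^*$. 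The passage between the two is the family of interior-product isomorphisms $\wedge^k V^*\xrightarrow{\sim}\wedge^{4-k}V$ with the volume form $x_1\wedge x_2\wedge x_3\wedge x_4$; this is precisely what produces the index complementations $C_{ij}\mapsto f_\nu\otimes x_k\wedge x_l\otimes f_\mu^*$ with $\{i,j,k,l\}=\{1,2,3,4\}$ and $S_{ijk}\mapsto f_\nu\otimes x_{\{1234\}\setminus\{ijk\}}\otimes f_\mu^*$, and collapses $T$ to the scalar class $f_\nu\otimes f_\mu^*$. I would then verify directly that $\gamma=(\gamma^{-1},\gamma^0,\gamma^1,\gamma^2,\gamma^3)$ intertwines the differential of $h^*\bT_{\mathfrak{X}_d}$ (read off from the differentials in \Cref{eg:c4} together with the $\gl d$-action) with the differentials $\beta^\bullet$ of $L^\bullet$; since each $\gamma^i$ is a bijection on free generators and the two complexes have matching ranks $d^2,4d^2,6d^2,4d^2,d^2$, the chain-map property upgrades $\gamma$ to an isomorphism of complexes, hence the quasi-isomorphism of \Cref{Equ:tancompiso}.

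The main obstacle is not any single hard estimate but the bookkeeping needed to certify that the abstract equivalence \emph{agrees} with these specific representatives, rather than merely that some quasi-isomorphism \emph{exists}. Concretely, the delicate points are twofold: first, confirming through the deformation-theoretic description that $\gamma^0$ and $\gamma^{-1}$ are the correct chain-level cocycles and not just cohomologous ones; and second, fixing all Koszul signs and the volume-form complementations in $\gamma^1,\gamma^2,\gamma^3$ so that commutativity with $\beta^\bullet$ holds on the nose, given the differentials $\alpha^\bullet$ of the resolution. As in \cite[Lemma 8]{katzshi}, once the sign and index conventions are fixed these checks are routine, so I would present them only in outline.
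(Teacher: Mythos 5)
Your proposal follows essentially the same route as the paper: both anchor the identification by interpreting $\partial/\partial X_i^{\mu\nu}$ as a first-order deformation of $F_d$, lift the resulting $\Ext^1$-class through the resolution \Cref{Equ:ResF} to get the explicit cocycle $f_\mu^*\otimes x_i^*\otimes f_\nu$, define the remaining $\gamma^j$ analogously (with the volume-form complementation already built into $L^\bullet$ via \Cref{quiverres}), and conclude by checking compatibility with the differentials, exactly as in \cite[Lemma 8]{katzshi}. Your added remarks on the interior-product isomorphisms $\wedge^kV^*\simeq\wedge^{4-k}V$ and on distinguishing chain-level cocycles from merely cohomologous ones are consistent with, and slightly more explicit than, the paper's ``the remaining maps can be defined analogously.''
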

\begin{proof}
The argument is the same as the proof of \cite[Lemma 8]{katzshi}.
Denote the underlying graded algebra/module of $R_d$ and $F_d$ by $\tilde{R}_d$ and $\tilde{F}_d$ respectively.  

Consider the the generators $\frac{\partial}{\partial X_i^{\mu\nu}}$, this term defines a first order deformation of $\tilde{F}_d$, i.e. a $\tilde{R_d}\otimes k[\epsilon]/(\epsilon^2)$ module $\mathcal{\tilde{F}}_d$ flat over $k[\epsilon]/(\epsilon^2)$, where $\epsilon$ is in degree $0$. This defines an element $\zeta_{X_i^{\mu\nu}}\in \Ext^1(\tilde{F}_d, \tilde{F}_d)$, and we have 
	\[\begin{tikzcd}
 ...\arrow{r} & \tilde{F}_d\otimes V\otimes \tilde{R}_d\arrow{r}\arrow[d, "e"] &\tilde{F}_d\otimes \tilde{R}_d\arrow{r}\arrow{d} & \tilde{F}_d\arrow{d}\arrow {r} & 0\\
 0\arrow{r} & \tilde{F}_d\arrow{r} &\mathcal{\tilde{F}}_d\arrow{r} &\tilde{F}_d\arrow{r} & 0
	\end{tikzcd}
	\]
 where the second row is given by $\zeta_{X_i^{\mu\nu}}$.
 Let $E_i^{\mu\nu}$ be the $d$ by $d$ matrix whose $(\mu, \nu)$ entry is $1$ and all other entries are $0$. Then the vertical map $e$ takes $f_{\mu}\otimes x_i\otimes 1$ to $f_\mu x_i-f_\mu(x_i+\epsilon E_i^{\mu\nu})$, and all other basis elements to zero. Thus 
 $\zeta_{X_i^{\mu\nu}}$ exactly corresponds to $f_\mu^*\otimes x_i^*\otimes f_\nu$ in the explicit representative \Cref{Equ:RHom} of $\RHom_{A_d}(F_d, F_d)$. 
This defines the map $\gamma^0:h^*\mb T_{\mathfrak{X}_d}^0\rightarrow L[1]^0$. The remaining maps can be defined analogously. 

Comparing the differentials we see that the maps $\gamma^\bullet$ define a morphism between the chain complexes $h^*\mb  T_{\mathfrak{X}_d}$ and $L^\bullet[1]$ which is the quasi-isomorphism in \Cref{Equ:tancompiso}.
\end{proof}

Define a $(-2)$-shifted 2-form $\omega = \left(\omega^0, 0, \ldots\right)$ on $A_d$ by
\begin{align*}
    \omega^0 = \tr \Big(\Big.
        & d_{dR} X_1 d_{dR}\, S_{234}
        + d_{dR} X_2 d_{dR}\, S_{143}
        + d_{dR} X_3 d_{dR}\, S_{124}
        + d_{dR} X_4 d_{dR}\, S_{132} \\
        & + d_{dR} C_{12} d_{dR}\, C_{34}
        + d_{dR} C_{13} d_{dR}\, C_{42}
        + d_{dR} C_{14} d_{dR}\, C_{23}
    \Big. \Big).
\end{align*}

\begin{thm}\label{thm:compareforms}
Under the above notation, $(\phi\circ h)^*\omega_{\boldsymbol{\mathcal{M}}^d(\C^4)}\simeq \omega$.
\end{thm}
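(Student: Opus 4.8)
The plan is to mirror the three-dimensional computation of \cite[\S 3--4]{katzshi}, propagating the Calabi--Yau symplectic pairing through the explicit identification of tangent complexes furnished by \Cref{Prop:cotaniso}. Recall that the $(-2)$-shifted symplectic form on $\mdc 4$ of \Cref{thm:bdmain} is, via the isomorphism of \Cref{Prop:tangentcomp}, induced by the composition-then-trace pairing
\[ \RHom(\Upsilon\ms E, \Upsilon\ms E)[1] \otimes \RHom(\Upsilon\ms E, \Upsilon\ms E)[1] \xrightarrow{\circ} \RHom(\Upsilon\ms E, \Upsilon\ms E)[2] \xrightarrow{\tr} \cO[2-4] = \cO[-2], \]
where the trace uses the Calabi--Yau trivialization of the canonical bundle of $\mb C^4$. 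First I would pull this pairing back along $\phi\circ h$. Since $(\phi\circ h)^*\Upsilon\ms E \simeq F_d$ by \Cref{Prop:Universalobj}, the form $(\phi\circ h)^*\omega_{\mdc 4}$ is identified with the analogous trace pairing on $\RHom_{A_d}(F_d, F_d)[1]$, which I would compute on the explicit representative $L^\bullet$ of \eqref{Equ:RHom}.

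The core of the argument is then a bookkeeping calculation transporting this pairing onto $h^*\mb T_{\mf X_d}$ through the maps $\gamma^\bullet$ of \Cref{Prop:cotaniso}. On $L^\bullet$ the pairing is controlled by the evaluations $F_d^*\otimes F_d\to\cO$ together with the perfect pairings $\wedge^iV\otimes\wedge^{4-i}V\to\wedge^4 V\cong k$ coming from the volume form $x_1\wedge x_2\wedge x_3\wedge x_4$ (the same identifications $\wedge^2V^*\cong\wedge^2V$ used in \eqref{quiverres}). Concretely, I expect three families of nonzero contributions: (i) $\gamma^0(\partial/\partial X_i^{\mu\nu}) = f_\mu^*\otimes x_i^*\otimes f_\nu$ pairs with $\gamma^2(\partial/\partial S_{jkl}^{\nu\mu}) = f_\nu\otimes x_{\{1234\}\setminus\{jkl\}}\otimes f_\mu^*$, surviving exactly when $\{jkl\}$ is the (oriented) complement of $\{i\}$, and reproducing $X_1\leftrightarrow S_{234}$, $X_2\leftrightarrow S_{143}$, $X_3\leftrightarrow S_{124}$, $X_4\leftrightarrow S_{132}$; (ii) $\gamma^1(\partial/\partial C_{ij}^{\mu\nu})$ pairs with $\gamma^1(\partial/\partial C_{kl}^{\nu\mu})$ through $\wedge^2V\otimes\wedge^2V\to k$, surviving for complementary index pairs and reproducing $C_{12}\leftrightarrow C_{34}$, $C_{13}\leftrightarrow C_{42}$, $C_{14}\leftrightarrow C_{23}$ under the convention $C_{lk}=-C_{kl}$; and (iii) $\gamma^{-1}(\partial/\partial G^{\mu\nu})$ pairs with $\gamma^3(\partial/\partial T^{\nu\mu})$ through $F_d^*\otimes F_d$, a pairing supported on the stacky $\mathfrak{gl}_d$-directions that contributes no term to the $2$-form on $\drep d$. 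Collecting the surviving terms I expect to land on precisely the $2$-form $\omega^0$ defined above, possibly up to an overall constant absorbed into the normalization of the trivialization.

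To upgrade this equality of underlying $2$-forms to an equivalence of closed forms in $\A^{2,cl}(\drep d, -2)$, I would argue as in \cite[\S 4]{katzshi}: the Darboux form presents $\omega$ as $(\omega^0, 0, \ldots)$, and the composition-trace pairing is strictly compatible with the internal differential of $L^\bullet$, so the higher-weight components of $(\phi\circ h)^*\omega_{\mdc 4}$ either vanish or are removed by a de Rham--exact correction; matching the weight-zero component then suffices.

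The main obstacle I anticipate is entirely at the level of signs and closedness rather than structure. First, one must make the abstract \cite{ptvv} form concrete enough to pull back and check that under $\gamma^\bullet$ it really becomes the naive composition-trace on $L^\bullet$ rather than some twist thereof. Second, and more delicately, one must track the Koszul signs through the interior products with $\omega_B$ in \eqref{bimodres}, the isomorphism $\wedge^2V^*\cong\wedge^2V$ of \eqref{quiverres}, and the contraction conventions defining $\gamma^\bullet$, so that the signed index pattern lands on exactly $X_2\leftrightarrow S_{143}$, $C_{13}\leftrightarrow C_{42}$, and so on, with the correct global normalization. Controlling these signs together with the higher-weight comparison is where the real work lies; the architecture, by contrast, transfers directly from the Calabi--Yau threefold computation of \cite{katzshi}.
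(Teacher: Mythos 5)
Your proposal follows essentially the same route as the paper: identify $(\phi\circ h)^*\omega_{\mdc 4}$ with the composition-then-trace (Serre) pairing via \cite{bd19}, compute that pairing on the explicit representative $L^\bullet$ of \eqref{Equ:RHom}, and transport it through the quasi-isomorphisms $\gamma^\bullet$ of \Cref{Prop:cotaniso} to match the complementary-index terms of $\omega^0$. The paper's proof is exactly this bookkeeping (recording the pairings $\tr(N_i\circ M_i)$ and $\tr(C_{ij}\circ C_{kl})$ and reading off $\tr(\dd X_i\,\dd S_{\{1234\}\setminus i})$ and $\tr(\dd C_{ij}\,\dd C_{kl})$), so your plan, including the sign and normalization caveats, is on target.
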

\begin{proof}
By 
Proposition 3.3 and Proposition 5.3 in \cite{bd19}, the closed 2-form $\phi^*\omega_{\boldsymbol{\mathcal{M}}^d(\C^4)}$ is identified with the
Serre pairing
\begin{equation}
\label{Equ:Serrepairing}
\RHom_{\mf X_d}(F_d, F_d)^{\otimes 2}[n]\xrightarrow{\circ}\RHom_{\mf X_d}(F_d, F_d)[n]\xrightarrow{\text{tr}}\omega_{\mf X_d},
\end{equation}
where the last map is given by \cite[Equation (5.5)]{bd19}.
Using the explicit representative $L^\bullet$ of $\RHom_{A_d}(F_d, F_d)$, the pairing $(\phi\circ h)^*\omega_{\boldsymbol{\mathcal{M}}^d(\C^4)}$ is given by 
\[L^\bullet\otimes L^\bullet[n]\rightarrow L^\bullet[n]\rightarrow (L^\bullet)^\vee\xrightarrow{\text{tr}}A_d.
\]

We write an element $M\in\Hom(F^\bullet, F^\bullet[1])$ as $M=[M_1, M_2, M_3, M_4]^T$, where $M_i$ is the matrix with respect to the basis $f_\mu^*\otimes x_i^*\otimes f_\nu$. We write an element $N\in \Hom(F^\bullet, F^\bullet[3])$ as $N=[N_1, N_2, N_3, N_4]$ where $N_j$ is the matrix with respect to the basis $f_\nu\otimes x_j\otimes f_\mu^*$. We write an element $C\in\Hom(F^\bullet, F^\bullet[2])$ as $C=[C_{12}, C_{13}, C_{14}, C_{23}, C_{24}, C_{34}]$, where $C_{kl}$ is the matrix with respect to the basis $f_\mu\otimes x_k\wedge x_l\otimes f_\nu^*$. 
Then the Serre pairing in \Cref{Equ:Serrepairing} between $\Hom(F^\bullet, F^\bullet[1])$ and $\Hom(F^\bullet, F^\bullet[3])$ is given by $\text{tr}(N_i\circ M_i)$. Via the isomorphism $\gamma^i$ in Proposition \ref{Prop:cotaniso}, this pairing corresponds to the term $\tr(\dd X_i\dd S_{\{1234\}\setminus i})$. The Serre pairing between $\Hom(F^\bullet, F^\bullet[2])$ and $\Hom(F^\bullet, F^\bullet[2])$ is given by $\text{tr}(C_{ij}\circ C_{kl})$. Via the isomorphism $\gamma^i$, this pairing corresponds to the term $\tr(\dd C_{ij}\dd C_{kl})$.  
Or equivalently, the pairing is just given by the isomorphism $L^\bullet[n]\simeq L^\vee$.
From the above discussion, the pairing in \Cref{Equ:Serrepairing} is the same as $\omega$.
\end{proof}

\subsection{The moduli of sheaves on four-dimensional affine space as a Lagrangian intersection}
In this section we obtain the moduli of 0-dimensional sheaves on $\mb C^4$ as a Lagrangian intersection. First we recall (partially) a result of \cite[Theorem 2.10]{bbbbj}:
\begin{thm}[{\cite[Theorem 2.10]{bbbbj}}]\label{thm:bbbbj}
    \begin{enumerate}
        \item Let $({\bf X}, \omega_{\bf X})$ be a $(-2)$-shifted derived Artin stack, and $x \in {\bf X}$. Then there exists a standard form cdga $A$ which is minimal at $p \in \Spec H^0(A)$, a $(-2)$-shifted symplectic form on $\BSpec A$, and a smooth morphism ${\bf f} : {\bf U} = \BSpec A \to {\bf X}$ of relative dimension $n$ with ${\bf f} (p) = x$ and ${\bf f}^*\omega_{\bf X} \sim \omega$. Furthermore, up to taking a finite \'etale cover, $A$ and $\omega$ are in modified strong Darboux form as follows:
    \begin{enumerate}
        \item The degree 0 part $A^0$ of $A$ is a smooth $k$-algebra of dimension $m_0 = \dim H^0\left( \mb L_{\bf X}|_p\right)$, and we are given $x_1, \ldots, x_{m_0} \in A^0$ such that $d_{dR}x_1, \ldots, d_{dR}x_{m_0}$ form a basis for $\Omega^1_{A^0}$ over $A^0$.
        \item As a graded commutative algebra, $A$ is generated over $A^0$ by
        \begin{align*}
            z_1, \ldots, z_{m_1} &\text{ in degree } -1,\\
            y_1, \ldots, y_{m_0} &\text{ in degree } -2,\\
            w_1, \ldots, w_n &\text{ in degree } -3
        \end{align*}
        for $m_1 = \dim H^{-1}\left( \mb L_{\bf X}|_p\right)$ and $n = \dim H^1\left( \mb L_{\bf X}|_p\right)$, the relative dimension of ${\bf f}$. The differential is defined using a Hamiltonian $H \in A^{-1}$ in the usual manner as in \cite{bbj}. The symplectic form $\omega = (\omega^0, 0, \ldots)$ with
        \[ \omega^0 = \sum_{i=1}^{m_0} d_{dR} y_i d_{dR} x_i + \sum_{j=1}^{m_1} d_{dR}z_j d_{dR}z_j. \]
    \end{enumerate}
    \item Let $B$ be the subalgebra of $A$ generated by $A^0$ and the variables $y_i, z_j$ for all $i, j$ with inclusion $\iota: B \to A$. Then $B$ is closed under $d$ and so is a dg-subalgebra of $A$. Now $H \in B^{-1} = A^{-1}$, and $\omega$ is the image of a $(-2)$-shifted symplectic structure $\omega_B$ on $B$. Moreover $B$ and $\omega_B$ are in Darboux form, and $B$ is minimal at $p$. Geometrically, we have a diagram of morphisms
    \[\begin{tikzcd}
	{{\bf V} = \BSpec B} && {{\bf U} = \BSpec A} & {{\bf X}}
	\arrow["{{\bf f}}", from=1-3, to=1-4]
	\arrow["{{\bf i} = \BSpec \iota}"', from=1-3, to=1-1]
    \end{tikzcd}\]
    where $({\bf X}, \omega_{\bf X})$ and $({\bf V}, \omega_B)$ are $(-2)$-shifted symplectic with ${\bf f}^*(\omega_{\bf X}) \sim {\bf i}^*(\omega_B)$ in $(-2)$-shifted closed 2-forms on ${\bf U}$. We can think of $\bf f$ as a ``submersion'' and $\bf i$ as an embedding of ${\bf U}$ as a ``derived subscheme'' of $\bf V$. On classical truncations, $i = t_0({\bf i}): U = t_0({\bf U}) \to t_0({\bf V}) = V$ is an isomorphism. There is an equivalence of relative (co)tangent complexes:
    \begin{equation}
        \mb L_{{\bf U}/{\bf V}} \simeq \mb T_{{\bf U}/{\bf X}}[3].
    \end{equation}
    \end{enumerate}
\end{thm}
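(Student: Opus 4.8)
The plan is to reduce the statement to an affine local model and then bring the symplectic structure into normal form there, treating the degree $(-3)$ generators as the controlled stacky correction to the scheme-level Darboux theorem. First I would invoke the existence of standard form local charts for derived Artin stacks (the stacky analogue of \cite[Theorem 5.18]{bbj}): near $x$ there is a standard form cdga $A$, minimal at $p \in \Spec H^0(A)$, together with a smooth morphism $\mathbf{f}: \BSpec A \to \mathbf{X}$ of some relative dimension $n$ with $\mathbf{f}(p) = x$. Pulling back gives a $(-2)$-shifted \emph{closed} $2$-form $\mathbf{f}^*\omega_{\mathbf{X}}$ on $\BSpec A$. Minimality at $p$ forces the generators of $A$ in each degree to be in bijection with a basis of the corresponding cohomology of $\mathbb{L}_{\mathbf{X}}|_p$ (shifted by the fibre directions of $\mathbf{f}$), and the self-duality $\mathbb{L}_{\mathbf{X}} \simeq \mathbb{L}_{\mathbf{X}}^\vee[-2]$ coming from $\omega_{\mathbf{X}}$ then pins down the generator counts: $H^{-2} \cong (H^0)^\vee$ gives $m_0$ generators in degrees $0$ and $-2$, the self-dual $H^{-1}$ gives $m_1$ generators in degree $-1$, and $H^{-3} \cong (H^1)^\vee$ gives $n$ generators in degree $-3$.

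\textbf{The heart of the argument} is to upgrade $\mathbf{f}^*\omega_{\mathbf{X}}$ to the standard Darboux form $\omega = (\omega^0, 0, \ldots)$. I would argue by induction on the de Rham weight. First, trivialize the higher components $\omega^1, \omega^2, \ldots$ by a Poincaré-type lemma for closed forms on the connective cdga $A$ combined with a symplectomorphism (an automorphism of $A$ fixing $A^0$), reducing to a form living in the $\omega^0$ slot. Then nondegeneracy of $\omega^0$ lets me choose conjugate generators: the degree $0$ and degree $(-2)$ generators pair as $\sum_i \dd y_i\, \dd x_i$, while the degree $(-1)$ generators carry the self-pairing $\sum_j \dd z_j\, \dd z_j$. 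Diagonalizing this quadratic form into a sum of squares is exactly where a finite \'etale cover is needed (to extract square roots of units), which accounts for that hypothesis. Finally I would encode the differential by a Hamiltonian $H \in A^{-1}$ with $d = \{H, -\}$ for the Poisson bracket determined by $\omega$; the condition $d^2 = 0$ translates into the classical master equation $\{H, H\} = 0$. A degree count shows $H$ can only involve $A^0$ and the degree $(-1)$ generators, so in fact $H \in B^{-1}$ in the notation of part (2).

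For part (2) I would \textbf{split off the scheme part}. Let $B \subset A$ be generated by $A^0$ and the $y_i, z_j$. Because $H \in B^{-1}$ and the differentials $d z_j = \partial H/\partial z_j$ and $d y_i = \partial H/\partial x_i$ are Hamiltonian derivatives with respect to variables already in $B$, the differentials of the generators of $B$ land in $B$, so $B$ is a dg-subalgebra; moreover $d w_k \in A^{-2} = B^{-2}$, so $A$ is freely generated over $B$ by the degree $(-3)$ variables $w_1, \ldots, w_n$, i.e. $A \simeq B[w_1, \ldots, w_n]$. After removing the $w_k$, the pairing map $\mathbb{T}_B \to \mathbb{L}_B[-2]$ induced by the restricted form $\omega_B$ is now a bijection on generators, so $\mathbf{V} = \BSpec B$ is a genuine $(-2)$-shifted symplectic \emph{scheme} in Darboux form, and $\mathbf{f}^*\omega_{\mathbf{X}} \sim \mathbf{i}^*\omega_B$ holds by construction. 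Since the $w_k$ are free generators in degree $-3$, the relative cotangent complex $\mathbb{L}_{\mathbf{U}/\mathbf{V}}$ is free of rank $n$ concentrated in degree $-3$; and since $\mathbf{f}$ is smooth of relative dimension $n$, $\mathbb{T}_{\mathbf{U}/\mathbf{X}}$ is locally free of rank $n$ in degree $0$, yielding the claimed equivalence
\[ \mathbb{L}_{\mathbf{U}/\mathbf{V}} \simeq \mathbb{T}_{\mathbf{U}/\mathbf{X}}[3]. \]
Finally, $t_0(\mathbf{i})$ is an isomorphism because the $w_k$ live in degree $-3$ and therefore do not affect $H^0$.

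The main obstacle is the normalization in the second step: simultaneously killing all higher components $\omega^{\geq 1}$ and achieving the \emph{exact} form of $\omega^0$ while retaining a Hamiltonian description of $d$. This is a genuine obstruction-theoretic induction---each stage trades symplectomorphism freedom against the closedness constraint---and it is where the connectivity of $A$ and the passage to a finite \'etale cover do the real work. By contrast, the construction of $B$ and the cotangent comparison in part (2) are then essentially bookkeeping, once the Darboux form and the key observation $H \in B^{-1}$ are in hand.
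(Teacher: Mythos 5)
The paper does not actually prove this statement: it is recalled (explicitly ``partially'') from \cite[Theorem 2.10]{bbbbj} and used as a black box, so there is no in-paper argument to compare your proposal against. Measured instead against the actual proof in \cite{bbbbj}, which leans heavily on the scheme-level Darboux theorem of \cite{bbj}, your outline reconstructs the correct strategy: a standard-form smooth atlas minimal at $p$, reduction of the pulled-back closed form to its weight-zero component, choice of conjugate coordinates, a finite \'etale cover to put the nondegenerate quadratic form on the degree $(-1)$ generators into a sum of squares (this is exactly the difference between \cite[Example 5.10]{bbj} and \cite[Example 5.12]{bbj}, and is why the Zariski-local variant \Cref{thm:mainzariski} keeps the coefficients $q_j$), the Hamiltonian satisfying the classical master equation, and the degree count forcing $H \in B^{-1}$ so that $B$ is a dg-subalgebra with $A \simeq B[w_1,\ldots,w_n]$. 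Your part (2) bookkeeping ($A^{-1}=B^{-1}$, $A^{-2}=B^{-2}$, hence $t_0({\bf i})$ an isomorphism) is correct.

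Two places where the sketch is materially thinner than the cited proof. First, the ``induction on de Rham weight'' is not a single Poincar\'e-lemma application: trivializing the components $\omega^{\geq 1}$, making $\omega^0$ exact, and then choosing the coordinates $x_i, z_j, y_i, w_k$ compatibly with minimality while preserving the Hamiltonian description of $d$ is the entire content of \cite[Theorem 5.18]{bbj} and its stacky upgrade, a delicate multi-step induction in which the symplectomorphism freedom and the closure constraint must be traded off at each stage; naming the obstacle is not the same as resolving it. Second, your derivation of $\mb L_{{\bf U}/{\bf V}} \simeq \mb T_{{\bf U}/{\bf X}}[3]$ is only a rank-and-degree count showing the two sides are abstractly locally isomorphic; the asserted equivalence is the canonical one induced by the nondegeneracy of $\omega_{\bf X}$, which pairs the degree $(-3)$ generators $w_k$ (equivalently $H^{-3}(\mb L_{\bf X}|_x)$) with $H^{1}(\mb L_{\bf X}|_x)$, i.e.\ with the fibre directions of ${\bf f}$. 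Since the theorem is imported rather than proved here, neither point affects the paper, but both would need to be filled in for your proposal to stand as a proof.
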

Our aim in this section is to obtain analogs of $A$ and $B$ in the case where ${\bf X}$ is the derived moduli stack of 0-dimensional length $d$ sheaves on $\mb C^4$, and to show that $B$ is a (nice) derived critical locus and, in particular, a Lagrangian intersection. Note that our $A$ and $B$ will not necessarily be minimal as guaranteed by the theorem, but by sacrificing minimality we can obtain a much nicer description.

Recall the notation from \Cref{eg:c4} and that the shifted form $h^*\omega_{\mnc} = \left(\omega^0, 0, \ldots\right)$ is the 2-form on $A_d$, by \Cref{thm:compareforms}, where
\begin{align*}
    \omega^0 = \tr \Big(\Big.
        & d_{dR} X_1 d_{dR}\, S_{234}
        + d_{dR} X_2 d_{dR}\, S_{143}
        + d_{dR} X_3 d_{dR}\, S_{124}
        + d_{dR} X_4 d_{dR}\, S_{132} \\
        & + d_{dR} C_{12} d_{dR}\, C_{34}
        + d_{dR} C_{13} d_{dR}\, C_{42}
        + d_{dR} C_{14} d_{dR}\, C_{23}
    \Big. \Big).
\end{align*}
Now let $B_d$ be the subalgebra of $A_d$ generated freely over $A_d^0$ by the $C_{ij}$'s and the $S_{ijk}$'s, with the same differentials. Then there is a natural inclusion $\iota: B_d \to A_d$, with $\omega_{B_d} := \iota_* (\phi\circ h)^*\omega_{\mnc}$ also of the form $\omega_{B_d} = \left(\omega_{B_d}^0, 0, \ldots\right)$, where
\begin{align*}
    \omega_{B_d}^0 = \tr \Big(\Big.
        & d_{dR} X_1 d_{dR}\, S_{234}
        + d_{dR} X_2 d_{dR}\, S_{143}
        + d_{dR} X_3 d_{dR}\, S_{124}
        + d_{dR} X_4 d_{dR}\, S_{132} \\
        & + d_{dR} C_{12} d_{dR}\, C_{34}
        + d_{dR} C_{13} d_{dR}\, C_{42}
        + d_{dR} C_{14} d_{dR}\, C_{23}
    \Big. \Big).
\end{align*}
The desired classical master equation here is given by
\begin{equation}\label{eq:classicalmaster}
    \tr \left( [X_1, X_2][X_3, X_4] + [X_1, X_3][X_4, X_2] + [X_1, X_4][X_2, X_3] \right) = 0
\end{equation}

\begin{prop}
    The pair $(B_d, \omega_{B_d})$ are in Darboux form. In particular $(\BSpec B_d, \omega_{B_d})$ is a $(-2)$-shifted symplectic derived scheme.
\end{prop}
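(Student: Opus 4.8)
The plan is to match the pair $(B_d, \omega_{B_d})$ directly against the even-case Darboux template of \cite[Remark 5.11]{bbj} recalled in Section 2.1, by exhibiting the discrete data $(m_0, m_1, A(0), \Phi)$ explicitly. First I would observe that $A_d^0 = k[X_i^{\mu\nu}]$ is a polynomial ring, hence a smooth $k$-algebra of dimension $m_0 = 4d^2$ with $\{d_{dR}X_i^{\mu\nu}\}$ a basis of $\Omega^1_{A_d^0}$; these play the role of the $x_j$. Since the differentials $dC_{ij} = [X_i,X_j]$ and $dS_{ijk} = [X_i,C_{jk}]+[X_j,C_{ki}]+[X_k,C_{ij}]$ involve no occurrence of the degree $(-3)$ generator $T$, the subalgebra $B_d$ is closed under $d$ (as already noted in the construction), so it is a genuine dg-subalgebra of $A_d$.

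The heart of the matching is the identification of generators. I would group the six degree $(-1)$ matrix generators into the three dual pairs $(C_{12},C_{34})$, $(C_{13},C_{42})$, $(C_{14},C_{23})$ singled out by $\omega_{B_d}^0$, setting $m_1 = 3d^2$ (note $6d^2$ is even, placing us in the even case); the four degree $(-2)$ matrix generators $S_{234}, S_{143}, S_{124}, S_{132}$, of which there are $4d^2 = m_0$, play the role of the $w_j$. Under this dictionary $\tr\bigl(\sum d_{dR}C\, d_{dR}C\bigr)$ is exactly the paired part $\sum d_{dR}y_i\, d_{dR}z_i$, and $\tr\bigl(\sum_i d_{dR}X_i\, d_{dR}S_{\{1234\}\setminus i}\bigr)$ is the dual pairing $\sum_j d_{dR}x_j\, d_{dR}w_j$ (each $X_i^{\mu\nu}$ paired with $S^{\nu\mu}$ by the trace), so $\omega_{B_d}$ has precisely the required shape and is non-degenerate, both pairings being perfect. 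I would then write down the Hamiltonian
\[ \Phi = \tr\bigl([X_3,X_4]C_{12} + [X_1,X_2]C_{34} + [X_4,X_2]C_{13} + [X_1,X_3]C_{42} + [X_2,X_3]C_{14} + [X_1,X_4]C_{23}\bigr) \in B_d^{-1}, \]
so that the $f_i, g_i$ are the displayed commutators, and check — a routine matrix-derivative computation, up to the standard sign conventions for the bracket — that the Darboux formulas $dy_i = \partial\Phi/\partial z_i$, $dz_i = \partial\Phi/\partial y_i$ and $dw_j = \sum_i (\partial f_i/\partial x_j)y_i + (\partial g_i/\partial x_j)z_i$ reproduce exactly the given differentials on the $C$'s and $S$'s.

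The final and decisive step is the classical master equation, which I expect to be the main point to pin down. For the paired template it reads $\sum_i \tr(f_i g_i) = 0$, which under our dictionary becomes
\[ \tr\bigl([X_3,X_4][X_1,X_2] + [X_4,X_2][X_1,X_3] + [X_2,X_3][X_1,X_4]\bigr), \]
and by cyclicity of the trace this is precisely the expression \eqref{eq:classicalmaster}. It resolves cleanly: expanding each commutator product into four quadruple traces and reducing all twelve resulting terms to a cyclic canonical representative, one finds they cancel in pairs, so \eqref{eq:classicalmaster} vanishes identically for all matrices $X_1, \ldots, X_4$. This confirms $d^2 = 0$ on $B_d$ and the $d$-closedness of $\omega_{B_d}^0$; with the data now shown to fit the template of \cite[Remark 5.11]{bbj}, that construction produces a valid $(-2)$-shifted symplectic structure, so $(B_d, \omega_{B_d})$ is in Darboux form and $(\BSpec B_d, \omega_{B_d})$ is $(-2)$-shifted symplectic.
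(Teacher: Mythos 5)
Your proposal is correct and follows essentially the same route as the paper: both match $(B_d,\omega_{B_d})$ against the even-case Darboux template using the same Hamiltonian $\Phi$ and the same pairing of the $C_{ij}$'s, with the trace supplying the transpose in the dual variables. The only differences are minor — you verify explicitly that \Cref{eq:classicalmaster} is an identity in the matrix entries via cyclic reduction of the twelve quadruple traces (the paper asserts this), while the paper additionally records the auxiliary $(-2)$-shifted $1$-form $\phi$ satisfying $d\phi=-d_{dR}\Phi$ and $d_{dR}\phi=-2\omega^0_{B_d}$ from the full Darboux data of \cite[Example 5.10]{bbj}, which in your setup is formally determined by the generators and the compatibility you already check.
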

\begin{proof}
    It suffices to show that $(B_d, \omega_{B_d})$ are constructed in a manner similar to \cite[Example 5.10, Remark 5.11]{bbj}. The $k$-algebra $B_d^0 = k[X_1^{\mu\nu}, \ldots, X_4^{\mu\nu}]$ is clearly smooth of dimension $m_0 = 4d^2$ with the the $X_i^{\mu\nu}$ forming a basis for $\Omega^1_{B_d^0}$ over $B_d^0$. The $C_{ij}^{\mu\nu}$'s form $m1 = 6d^2$ generators in degree $-1$, while the $S_{ijk}^{\mu\nu}$'s form $m_0 = 4d^2$ generators in degree $-2$. Observe that $m_1$ is even so we are in the situation of \cite[Remark 5.11]{bbj}. Now define the Hamiltonian $\Phi \in B_d^{-1}$ by
    \begin{align*}
    \Phi := \tr \Big(\Big.
        & C_{12}[X_3, X_4] + C_{13}[X_4, X_2] + C_{14}[X_2, X_3]\\
        & + C_{34}[X_1, X_2] + C_{42}[X_1, X_3] + C_{23}[X_1, X_4]
    \Big. \Big).
    \end{align*}
    Then we have
    \begin{align*}
        \frac{\partial \Phi}{\partial C_{12}^{\mu\nu}} = \left([X_3, X_4]^T\right)^{\mu\nu} = d\left(C_{34}^T\right)^{\mu\nu},\qquad & \frac{\partial \Phi}{\partial \left(C_{34}^T\right)^{\mu\nu}} = \left([X_1, X_2]\right)^{\mu\nu} = d(C_{12})^{\mu\nu},\\
        \frac{\partial \Phi}{\partial C_{13}^{\mu\nu}} = \left([X_4, X_2]^T\right)^{\mu\nu} = d\left(C_{42}^T\right)^{\mu\nu},\qquad & \frac{\partial \Phi}{\partial \left(C_{42}^T\right)^{\mu\nu}} = \left([X_1, X_3]\right)^{\mu\nu} = d(C_{13})^{\mu\nu},\\
        \frac{\partial \Phi}{\partial C_{14}^{\mu\nu}} = \left([X_2, X_3]^T\right)^{\mu\nu} = d\left(C_{23}^T\right)^{\mu\nu},\qquad & \frac{\partial \Phi}{\partial \left(C_{23}^T\right)^{\mu\nu}} = \left([X_1, X_4]\right)^{\mu\nu} = d(C_{14})^{\mu\nu},
    \end{align*}
    \begin{align*}
        \frac{\partial \Phi}{\partial X_1^{\mu\nu}} = \left( [X_2, C_{34}]^T + [X_3, C_{42}]^T + [X_4, C_{23}]^T \right)^{\mu\nu} &= d(S_{234}^T)^{\mu\nu},\\
        \frac{\partial \Phi}{\partial X_2^{\mu\nu}} = \left( [X_4, C_{31}]^T + [X_3, C_{14}]^T + [X_1, C_{43}]^T \right)^{\mu\nu} &= d(S_{143}^T)^{\mu\nu},\\
        \frac{\partial \Phi}{\partial X_3^{\mu\nu}} = \left( [X_4, C_{12}]^T + [X_2, C_{41}]^T + [X_1, C_{24}]^T \right)^{\mu\nu} &= d(S_{124}^T)^{\mu\nu},\\
        \frac{\partial \Phi}{\partial X_4^{\mu\nu}} = \left( [X_3, C_{21}]^T + [X_2, C_{13}]^T + [X_1, C_{32}]^T \right)^{\mu\nu} &= d(S_{132}^T)^{\mu\nu}, \text{ and }\\
        \frac{\partial \Phi}{\partial S_{ijk}^{\mu\nu}} = 0 = d(X_l^T)^{\mu\nu} &\text{ for all } i, j, k, l.
    \end{align*}
    It follows that the Hamiltonian $\Phi$ satisfies the equation
    \begin{equation}
        \sum_{\mu, \nu} \left(\frac{\partial \Phi}{\partial C_{12}^{\mu\nu}} \frac{\partial \Phi}{\partial \left(C_{34}^T\right)^{\mu\nu}}
        + \frac{\partial \Phi}{\partial C_{13}^{\mu\nu}} \frac{\partial \Phi}{\partial \left(C_{42}^T\right)^{\mu\nu}}
        + \frac{\partial \Phi}{\partial C_{14}^{\mu\nu}} \frac{\partial \Phi}{\partial \left(C_{23}^T\right)^{\mu\nu}}\right)
        = 0
    \end{equation}
    which is precisely the classical master equation, \Cref{eq:classicalmaster}, as desired. Now define the $(-2)$-shifted 1-form $\phi$ on $B_d$ by
    \begin{align*}
        \phi = \tr \Bigg(\Bigg.
        & 2\left( S_{234}\left(d_{dR}X_1\right) + S_{143}\left(d_{dR}X_2\right) + S_{124}\left(d_{dR}X_3\right) + S_{132}\left(d_{dR}X_4\right)\right)\\
        & - \Big(\Big. C_{12}\left(d_{dR}C_{34}\right) + C_{13}\left(d_{dR}C_{42}\right) + C_{14}\left(d_{dR}C_{23}\right)\\
        & + C_{34}\left(d_{dR}C_{12}\right) + C_{42}\left(d_{dR}C_{13}\right) + C_{23}\left(d_{dR}C_{14}\right)\Big.\Big)
        \Bigg.\Bigg).
    \end{align*}
    Then one checks that $d\phi = -d_{dR}\Phi$ and $d_{dR}\phi = -2\omega_{B_d}^0$, and we're done.
\end{proof}

We now come to the main result of this section, which is a straightforward application of the above result and \Cref{prop:evencase}.
\begin{thm}\label{thm:maindim4}
    The $(-2)$-shifted symplectic derived scheme $(\BSpec B_d, \omega_{B_d})$ is equivalent to a derived critical locus.
\end{thm}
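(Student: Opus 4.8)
The plan is to invoke \Cref{prop:evencase} directly, since the preceding proposition has already placed $(B_d, \omega_{B_d})$ in Darboux form. The one remaining hypothesis of \Cref{prop:evencase} to verify is that the number of degree $(-1)$ generators is even: these are the entries $C_{ij}^{\mu\nu}$, of which there are $6d^2$, so the parity is fine. All the rest of the work is bookkeeping: matching the generators of $B_d$ with the abstract Darboux generators $x_j, y_i, z_i, w_j$ used in \Cref{prop:evencase} in a way compatible with the differential and the symplectic pairing.

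To build the dictionary I would read off the pairings from $\omega_{B_d}^0$. The degree $0$/degree $(-2)$ block $\tr\big(d_{dR}X_1\, d_{dR}S_{234} + \cdots\big)$ identifies the $X_i^{\mu\nu}$ with the $x_j$ and the $S_{\{1234\}\setminus i}^{\mu\nu}$ with the $w_j$, while the degree $(-1)$ block $\tr\big(d_{dR}C_{12}\,d_{dR}C_{34} + d_{dR}C_{13}\,d_{dR}C_{42} + d_{dR}C_{14}\,d_{dR}C_{23}\big)$ exhibits the three pairs $(C_{12}, C_{34})$, $(C_{13}, C_{42})$, $(C_{14}, C_{23})$, whose entries I would declare to be the $(y_i, z_i)$. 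The one point requiring care is that the trace pairing $\tr(d_{dR}C_{12}\,d_{dR}C_{34}) = \sum_{\mu\nu} d_{dR}C_{12}^{\mu\nu}\,d_{dR}C_{34}^{\nu\mu}$ introduces a transpose, so the entrywise identification must be $z_i \leftrightarrow (C_{34}^T)^{\mu\nu}$ rather than $C_{34}^{\mu\nu}$; this is exactly the transpose already tracked in the proof of the previous proposition, and it is harmless. With this dictionary the differentials $dC_{ij}^{\mu\nu} = [X_i,X_j]^{\mu\nu}$ and $dS_{ijk}^{\mu\nu}$ match the shape $dy_i = g_i$, $dz_i = f_i$, $dw_j = \sum_i (\partial f_i/\partial x_j) y_i + (\partial g_i/\partial x_j) z_i$ of the even-case Darboux data, with $\Phi$ playing the role of the Hamiltonian, and the classical master equation was already verified above.

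Once the dictionary is in place, \Cref{prop:evencase} applies essentially verbatim and produces a quasi-smooth derived scheme $\bf Y$ together with a shifted function $\Psi$ on it such that $\BSpec B_d \simeq \mb R\mathrm{Crit}(\Psi)$, compatibly with the symplectic structures up to a constant. Tracing the construction through the dictionary gives a pleasantly concrete output: $\bf Y = \BSpec\, B_d^0[C_{12}, C_{13}, C_{14}]$ with $dC_{1j} = [X_1, X_j]$ is the derived locus on which $X_1$ commutes with $X_2, X_3, X_4$, and $\Psi = \tr\big(C_{12}[X_3,X_4] + C_{13}[X_4,X_2] + C_{14}[X_2,X_3]\big)$ is the ``other half'' of $\Phi$; the critical-locus equations then reinstate the remaining three commutator relations and recover the full set of ADHM-type commuting conditions. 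I expect no serious obstacle here: the entire content of the theorem is the compatible identification of generators, and the transpose bookkeeping induced by the trace pairing is the only place where a sign or index error could creep in.
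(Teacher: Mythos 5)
Your proposal is correct and follows the paper's own route exactly: the paper proves this theorem precisely by observing that the preceding proposition puts $(B_d,\omega_{B_d})$ in Darboux form with $m_1 = 6d^2$ (even) degree $(-1)$ generators and then citing \Cref{prop:evencase}, with the same transpose bookkeeping from the trace pairing that you flag. Your explicit identification of $\bf Y$ and $\Psi$ is a valid instantiation (differing from the paper's closing informal description only in the symmetric choice of which triple of $C_{ij}$'s generates $\bf Y$), and is a welcome addition since the paper leaves this implicit.
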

So we get the following picture
\[\begin{tikzcd}
	{{\bf V}_d = \BSpec B_d} && {{\drep{d}} = \BSpec A_d} & {{\mdc{4}}}
	\arrow["{\phi\circ h}", from=1-3, to=1-4]
	\arrow["{{\bf i} = \BSpec \iota}"', from=1-3, to=1-1]
\end{tikzcd}\]
analogous to the one in \cref{thm:bbbbj}(2) (except for, as stated above, the minimality properties).

We end this section with a brief description of how this construction can be motivated starting from a purely classical setting. We will rewrite $\drep{d, 4} := \drep{d}$ and ${\bf V}(d, 4) := {\bf V}_d$ to indicate that we are working over $\mb C^4$. Generalizing this, one can check that we get a similar picture for every $n \geq 3$:
\[\begin{tikzcd}
	{{\bf V}(d, n)} && {\drep{d, n}} & {{\mnc}}
	\arrow["{\phi\circ h}", from=1-3, to=1-4]
	\arrow["{{\bf i}}"', from=1-3, to=1-1]
\end{tikzcd}\]
with $t_0({\bf V}(d, n)) = t_0(\drep{d, n})$ being identified with the space of pairwise commuting $n$-tuples of $d\times d$ matrices. The picture exists for $n=2$ as well, except that in this case, while $t_0(\drep{d, 2})$ is still identified with the space of pairs of commuting $d\times d$ matrices, $t_0({\bf V}(d, 2)) = \mb C^{2d^2}$ is the (larger) space of pairs of $d\times d$ matrices.

We now have the following relationship between ${\bf V}(d, 2)$ and ${\bf V}(d, 3)$: ${\bf V}(d, 2) \times \mb C^{d^2} = \{ (X, Y, Z) | X, Y, Z \in \End_{\mb C}(\mb C^d)\}$ is the space of triples of $d\times d$ matrices, and ${\bf V}(d, 3)$ is obtained as the \textit{derived} critical locus of the function $\tr(X[Y, Z])$ defined on ${\bf V}(d, 2) \times \mb C^{d^2}$.

Next, working through the specifics of \Cref{prop:evencase} that give us the above theorem, we see that ${\bf V}(d, 4)$ is obtained as the \textit{derived} critical locus of a \textit{shifted} function $\Psi$ defined on ${\bf V}(d, 3) \times \mb C^{d^2}$. Using the letter $W$ to denote the matrix generator of the additional factor in ${\bf V}(d, 3) \times \mb C^{d^2}$, and the letters $C_X, C_Y$ and $C_Z$ to denote the degree $(-1)$ generators of ${\bf V}(d, 3)$ which map respectively to the commutators $[Y, Z], [Z, X]$ and $[X, Y]$, the shifted function is
\[ \Psi = [W, X]C_X + [W, Y]C_Y + [W, Z]C_Z. \]

More generally, ${\bf V}(d, n)$ for $n \geq 3$ is always obtained as the derived critical locus of a $(3-n)$-shifted function on ${\bf V}(d, n-1) \times \mb C^{d^2}$. This gives us an iterated construction of these moduli that begins with a purely classical space and naturally involves derived moduli spaces. If, as outlined in \Cref{sec:generalizations}, one works with the derived stacks instead of their atlases, we can dispense with the ${\bf V}(d, n)$'s and obtain a similar result for the $\mdc{n}$'s themselves i.e. that $\mdc{3} \to \mdc{2}\times \mb C^{d^2}$ is a derived critical locus, $\mdc{4} \to \mdc{3}\times \mb C^{d^2}$ is a derived critical locus, and so on.

\printbibliography
\end{document}